\title{Examples and Nonexamples of Distal Metric Structures}
\author{Aaron Anderson and Ita\"i Ben Yaacov}
\address{Aaron Anderson \\
  University of Pennsylvania, Department of Mathematics \\
  209 S 33rd St, Philadelphia, PA 19104 \\
  United States}
\urladdr{\url{http://awainverse.github.io}}
\address{Itaï \textsc{Ben Yaacov} \\
  Université Claude Bernard -- Lyon 1 \\
  Institut Camille Jordan, CNRS UMR 5208 \\
  43 boulevard du 11 novembre 1918 \\
  69622 Villeurbanne Cedex \\
  France}
\urladdr{\url{http://math.univ-lyon1.fr/~begnac/}}
\begin{document}

\begin{abstract}
    This article provides examples of distal metric structures.
One source of examples are metric valued fields.
By analyzing indiscernible sequences, we show that real closed metric valued fields are distal, and conclude that
algebraically closed metric valued fields, while stable, have the strong Erd\H{o}s-Hajnal property, which we define appropriately for metric structures.

We find another example in topological dynamics: we study a metric structure whose automorphism group is the well-understood Polish group $\mathrm{Hom}^+([0,1])$ of increasing homeomorphisms of $[0,1]$.
This was known\cite{ibarlucia1} to be NIP and highly unstable, and further properties were established in \cite{no_trans}.
We characterize models of its theory of this structure, which we call \emph{Dual Linear Continua}, up to isomorphism.
We analyze their indiscernible sequences and prove that they are distal, as well as constructing explicit distal cell decompositions.
\end{abstract}

\maketitle

\section{Introduction}
The school of model theory called neostability revolves around classifying first-order structures based on combinatorial properties.
The classes of structures studied include stable structures, such as algebraically closed fields, where no infinite linear orders are definable,
and NIP structures, including the stable structures, whose definable sets are combinatorially well-behaved. 
Distal structures, introduced in \cite{distal_simon}, are a class of first-order structures which are NIP, but given this, are as far as possible from being stable.
As such, orders tend to be central to the study of distal structures.
Frequently the structures themselves are defined as expansions of a linear order, such as $o$-minimal structures and their generalizations (weakly or quasi-$o$-minimal structures).
Examples of these include dense linear orders, Presburger arithmetic, real closed fields, and various other natural structures on $\R$, potentially including much of the analytic structure.\cite{vdd_tame}
Other examples introduce an order through a valuation, where the order on the valued group still ensures that the structure is distal.
These includes several natural structures on the $p$-adics as well as some differential fields of transseries.\cite{distal_simon,distal_val}

Continuous logic provides a better logical framework for studying metric spaces and analytic objects.
The objects it studies, \emph{metric structures}, provide a variety of new examples for model theory.
Various characterizations of distality were extended to the context of continuous logic in \cite{anderson1,anderson2}, so it is natural to ask which known metric structures are distal or have distal expansions.
However, many of the well-understood metric structures are either combinatorially ill-behaved, such as Urysohn space, or are stable.\cite{urysohn, mtfms, rtrees, hanson_approx}.
In particular, ordered metric structures have not been studied, or even defined, outside of the context of ordered real closed metric valued fields.\cite{mvf}
In this paper, we provide examples of distal metric structures and contrast them with notable non-examples observed by James Hanson.

For combinatorial purposes, it often suffices to consider structures which admit distal expansions, which are necessarily NIP.
In classical logic, these structures are known to have the strong Erd\"os-Hajnal property (SEH): definable relations satisfy a particular strengthening of Ramsey's theorem.
No converse is known - that is, it is unclear if SEH, or any other purely combinatorial test, determines whether an NIP structure has a distal expansion,
although so far, the only way an NIP structure has been shown to \emph{not} have a distal expansion is by showing it lacks SEH.

In Section \ref{sec_seh}, we give a statement of SEH for metric structures.
Any metric structure with a distal expansion has this property as an easy corollary of work in \cite{anderson2},
and in Section \ref{sec_vf}, we provide an example of a stable metric structure (any algebraically closed metric valued field) that admits a distal expansion and thus has SEH.
However, we will also see in Section \ref{sec_non} that at least in continuous logic, there is another way of determining that common examples of stable structures do not admit distal expansions.

In Section \ref{sec_vf}, we examine some metric valued fields.
These structures are constructed by taking a field with valuation in $\R_{\geq 0}$, and incorporating the valuation metric into the metric structure.
In \cite{mvf}, theories of algebraically and real closed metric valued fields are developed.
In Section \ref{sec_vf}, we show, using the indiscernible sequence definition, that real closed metric valued fields are distal.
We also show that algebraically closed metric valued fields are interpretable in real closed metric valued fields, from which we conclude that these have the strong Erd\"os-Hajnal property,
although they are stable and thus not distal.

Section \ref{sec_dlc} explores a fundamentally different distal metric theory, which we call \emph{dual linear continua}.
Models of this theory consist of the set of functions from some linear continuum (such as the linear order $[0,1]$)
to $[0,1]$ which are continuous, nondecreasing, and surjective, with a particular structure placed upon them.
In the case of the linear continuum $[0,1]$, the automorphism group of this structure is the group of increasing homeomorphisms from $[0,1]$ to itself.
This structure had been studied before in \cite{no_trans} and in \cite{ibarlucia1}, where it had been shown to be NIP but decidedly not stable.
We show that in fact, the structure is distal, both by studying its indiscernible sequences and by constructing explicit distal cell decompositions.

Finally, in Section \ref{sec_non}, we examine some metric structures which are NIP but do not admit distal expansions, suggested by James Hanson.
These include any metric structure expanding a Banach space, and in particular the Keisler randomization of any (metric) structure.
This shows that unlike stability \cite{random09} or NIP \cite{randomVC}, distality is \emph{not} preserved by taking randomizations.

For background on continuous logic, we refer to \cite{mtfms}, for further notation and background assumed throughout on distal metric structures in particular, we refer to \cite{anderson1}.

\subsection*{Acknowledgements}
We thank James Hanson for several helpful insights, some of which are detailed in Section \ref{sec_non}.
The first author thanks Artem Chernikov for advising, and was partially supported by the Chateaubriand fellowship, the UCLA Logic Center, the UCLA Dissertation Year Fellowship, and NSF grants DMS-1651321 and DMS-2246598.
The second author was partially supported by ANR project AGRUME (ANR-17-CE40-0026).

\section{Strong Erd\H{o}s-Hajnal}\label{sec_seh}
In addition to considering examples of distal metric structures, we will identify interesting reducts of distal metric structures.
Even if these reducts are no longer distal, they will retain properties such as the \emph{strong Erd\H{o}s-Hajnal property}.
In \cite{distal_reg}, it was shown that distality is equivalent to the \emph{definable} strong Erd\H{o}s-Hajnal property, which implies the strong Erd\H{o}s-Hajnal property for all of its reducts.
This characterization of distality was extended to metric structures in \cite{anderson2}, and we will now describe the strong Erd\H{o}s-Hajnal property for reducts of distal metric structures.
We define homogeneity for sets and definable predicates as in \cite{anderson2}:
\begin{defn}
    For $i = 1,\dots,n$, let $A_i \subseteq M^{x_i}$, let $\phi(x_1,\dots,x_n)$ be a definable predicate (possibly with parameters) and let $\varepsilon > 0$.
    Then we say that $(A_i: 1 \leq i \leq n)$ is $(\phi,\varepsilon)$-\emph{homogeneous} when for all $(a_i : i \in I), (a_i' : i \in I) \in A_1\times \dots \times A_n$,
    $|\phi(a_1,\dots,a_n) - \phi(a_1',\dots,a_n')|\leq \varepsilon$.

    If for $1 \leq i \leq n$, $\psi_i(x_i)$ are definable predicates (possibly with parameters), we say that $(\psi_i(x_i) : 1 \leq i \leq n)$ are $(\phi,\varepsilon)$-homogeneous when the supports $\psi_i(x_i) > 0$ are.
\end{defn}

\begin{fact}[{\cite[Corollary 5.7]{anderson2}}]
    A theory $T$ of continuous logic is distal if and only if every definable predicate $\phi(x_1,\dots,x_n;y)$ has the definable strong Erd\H{o}s-Hajnal property:
    
    For every $\varepsilon > 0$, there exist definable predicates $\psi_i(x_i;z_i)$ and $\delta > 0$ such that
    if $\mu_1 \in \mathfrak{M}_{x_1}(M),\dots,\mu_n \in \mathfrak{M}_{x_n}(M)$ are such that for $i < n$, $\mu_i$ is generically stable, and $b \in M^y$, then for any
    product measure $\omega$ of $\mu_1,\dots,\mu_n$, there are $d_i \in M^{z_i}$ such that $\psi_i(x_i;d_i)$ are $(\phi(x;b),\varepsilon)$-homogeneous 
            and $\int_{S_{x_i}(M)}\psi_i(x_i;d_i)\,d\mu_i \geq \delta$ for each $i$.
\end{fact}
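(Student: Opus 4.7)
The overall strategy is to adapt the Chernikov--Starchenko equivalence of \cite{distal_reg} between distality and the definable strong Erd\H{o}s-Hajnal property to the continuous setting, using the distal cell decompositions for metric structures developed in \cite{anderson1, anderson2}. Both implications run parallel to the classical argument, but require care with the $\varepsilon$-approximations of continuous logic and with the behavior of generically stable Keisler measures on metric structures.

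For the forward direction, assume $T$ is distal, fix $\phi(x_1,\dots,x_n;y)$ and $\varepsilon > 0$, and invoke a distal cell decomposition for $\phi$: definable predicates $\psi_i(x_i;z_i)$ such that for every parameter $b$ there is a covering of the product space by finitely many tuples of $\psi_i$-cells on each of which $\phi(\cdot;b)$ oscillates by less than $\varepsilon$. Given generically stable $\mu_1,\dots,\mu_{n-1}$ and an arbitrary $\mu_n$, approximate each $\mu_i$ by an empirical average over finitely many types, using the approximation properties of generically stable measures in NIP continuous theories; apply the cell decomposition to this finite sample and pigeonhole over the product measure $\omega$: at least one tuple of cells in the covering carries $\omega$-mass bounded below by a constant $\delta'$ depending only on the combinatorics of the decomposition. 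Since $\omega$ is a product, each factor cell has $\mu_i$-mass at least $\delta'$ as well, giving the required $\delta$.

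For the converse, argue by contrapositive. If $T$ is not distal, the indiscernible-sequence characterization from \cite{anderson1} produces an indiscernible $(a_i)_{i \in I}$ and a parameter $b$ along with some $\varepsilon_0 > 0$ witnessing the failure: inserting a single element into the middle of a long enough indiscernible shifts some $\phi$-value by more than $\varepsilon_0$. Build generically stable $\mu_i$ as averages along segments of this sequence, so that any $\psi_i(x_i;d_i)$ of $\mu_i$-mass at least $\delta$ must concentrate on a definite proportion of the sequence; the non-distal configuration then forces the corresponding product cell to exhibit oscillation of $\phi$ greater than $\varepsilon_0/2$, contradicting definable strong Erd\H{o}s-Hajnal for any $\varepsilon < \varepsilon_0/2$. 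The main obstacle is exactly this direction: translating the ``one bad position in an indiscernible'' configuration into a measure-theoretic lower bound that is robust under continuous approximations, confirming that the $\varepsilon_0$ witnessing non-distality survives averaging into genuinely generically stable measures, and tracking the interaction of $\varepsilon$, $\delta$, and the moduli of uniform continuity of the predicates involved. This is precisely the content of \cite[Corollary 5.7]{anderson2}, which I would invoke rather than reprove.
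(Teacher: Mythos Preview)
The paper does not prove this statement: it is recorded as a \texttt{fact} environment with the citation \cite[Corollary 5.7]{anderson2} and no accompanying proof. There is therefore nothing in the paper to compare your argument against. Your closing sentence---that you would invoke \cite[Corollary 5.7]{anderson2} rather than reprove it---is exactly what the paper does, so on that level your proposal matches the paper's treatment.

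As for the sketch you do give: the forward direction (distal cell decomposition plus pigeonhole over a product of generically stable measures) is the expected shape of the argument, though your pigeonhole step glosses over why large $\omega$-mass of a product cell forces each factor to have large $\mu_i$-mass (this is false for general products; one needs to argue with the cell decomposition coordinate by coordinate, or use a Fubini-type bound). The converse sketch is vaguer still: building generically stable measures from averages along an indiscernible sequence is fine, but the claim that any $\psi_i$-cell of positive mass must exhibit $\phi$-oscillation greater than $\varepsilon_0/2$ needs a genuine argument connecting the single bad insertion point to a positive-density phenomenon. None of this is wrong in spirit, but it is a strategy outline rather than a proof, and since the paper simply cites the result, there is no need to fill in more.
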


As all counting measures are generically stable, we can deduce the following:
\begin{lem}
    Assume $T$ is a distal theory in continuous logic.
    If $\phi(x_1,\dots,x_n)$ is a definable predicate with parameters, then $\phi$ has the strong Erd\H{o}s-Hajnal property:
    Then for every $\varepsilon > 0$, there is some $\delta > 0$ such that if $A_1,\dots,A_n$ are finite subsets of $M^{x_1},\dots,M^{x_n}$ respectively, then there are $B_1,\dots,B_n$
    with $B_i \subseteq A_i$ and $|B_i| \geq \delta |A_i|$ such that $(B_i : 1 \leq i \leq n)$ is $(\phi,\varepsilon)$-homogeneous.
\end{lem}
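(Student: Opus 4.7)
The plan is to apply the hypothesized Fact directly to uniform counting measures on the given finite sets $A_i$. Since counting measures are generically stable, and the constants $\delta$ and predicates $\psi_i$ produced by the Fact depend only on $\phi$ and $\varepsilon$, not on the measure, this should immediately yield a uniform cardinality bound.

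In detail, I would fix $\varepsilon > 0$ and invoke the Fact for $\phi$ (with any parameters of $\phi$ absorbed into the parameter tuple $b$) and $\varepsilon$, obtaining definable predicates $\psi_i(x_i;z_i)$ and $\delta_0 > 0$. For convenience I would normalize so that each $\psi_i$ takes values in $[0,1]$, which does not affect the conclusion since the supports $\{\psi_i > 0\}$ are unchanged. Next, given arbitrary finite sets $A_1,\dots,A_n$ with $A_i \subseteq M^{x_i}$, I would take $\mu_i$ to be the uniform probability measure on $A_i$; by the remark preceding the lemma, each $\mu_i$ is generically stable. Applying the Fact to any product measure $\omega$ of $\mu_1,\dots,\mu_n$ produces $d_i \in M^{z_i}$ such that $\psi_i(x_i;d_i)$ are $(\phi,\varepsilon)$-homogeneous and
\[
    \frac{1}{|A_i|}\sum_{a \in A_i}\psi_i(a;d_i) \;=\; \int_{S_{x_i}(M)}\psi_i(x_i;d_i)\,d\mu_i \;\geq\; \delta_0.
\]

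I would then set $B_i := \{a \in A_i : \psi_i(a;d_i) > 0\}$, so that $(B_i : 1 \leq i \leq n)$ is $(\phi,\varepsilon)$-homogeneous by definition of homogeneity for predicates. The only remaining step is to pass from the integral bound to a cardinality bound: since $\psi_i(\cdot;d_i)$ vanishes off $B_i$ and is bounded by $1$, we get $|B_i|/|A_i| \geq \delta_0$, so $\delta := \delta_0$ works. There is no real obstacle; the substantive content is packaged into the Fact, and this lemma is essentially a specialization to counting measures together with a routine truncation argument to convert mass into support size.
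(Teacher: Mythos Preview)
Your proposal is correct and is exactly the argument the paper intends: the paper's only proof is the single sentence ``As all counting measures are generically stable, we can deduce the following,'' and you have filled in the routine details (apply the Fact to the uniform measures on the $A_i$, take $B_i$ to be the support of $\psi_i(\cdot;d_i)$ inside $A_i$, and use $\psi_i\le 1$ to convert the integral lower bound into $|B_i|\ge\delta_0|A_i|$). Nothing more is needed.
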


\begin{lem}
    The strong Erd\H{o}s-Hajnal property is closed under continuous combinations: if for $1 \leq j \leq n$, definable predicates $\phi_j(x) = \phi_j(x_1,\dots,x_m)$ have the strong Erd\H{o}s-Hajnal property,
    and $u : [0,1]^n \to [0,1]$ is continuous, then $u(\phi_1(x),\dots,\phi_n(x))$ also has the strong Erd\H{o}s-Hajnal property.
\end{lem}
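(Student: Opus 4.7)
The plan is to reduce to iterating the SEH property for each $\phi_j$ in turn, using the uniform continuity of $u$ to combine the homogeneity conclusions. Since $u\colon[0,1]^n\to[0,1]$ is continuous on a compact set, it is uniformly continuous, so given $\varepsilon>0$ we may choose $\eta>0$ such that whenever $|r_j-r_j'|\leq\eta$ for every $j=1,\dots,n$, one has $|u(r_1,\dots,r_n)-u(r_1',\dots,r_n')|\leq\varepsilon$.

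Next, I would invoke SEH for each $\phi_j$ at tolerance $\eta$: for each $j$ there is $\delta_j>0$ such that any finite tuple $(A_1,\dots,A_m)$ admits a subtuple $(B_1,\dots,B_m)$ with $|B_i|\geq\delta_j|A_i|$ that is $(\phi_j,\eta)$-homogeneous. Starting from arbitrary finite $A_1,\dots,A_m$, I apply SEH for $\phi_1$ to obtain $B_i^{(1)}\subseteq A_i$ with $|B_i^{(1)}|\geq\delta_1|A_i|$ and $(\phi_1,\eta)$-homogeneity; then apply SEH for $\phi_2$ to $(B_1^{(1)},\dots,B_m^{(1)})$ to obtain $B_i^{(2)}\subseteq B_i^{(1)}$ with $|B_i^{(2)}|\geq\delta_2|B_i^{(1)}|$ and $(\phi_2,\eta)$-homogeneity; and so on. At stage $k$, the key observation is that $(\phi_j,\eta)$-homogeneity is inherited by subtuples, so $(B_1^{(k)},\dots,B_m^{(k)})$ is simultaneously $(\phi_j,\eta)$-homogeneous for every $j\leq k$.

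After $n$ steps, setting $B_i:=B_i^{(n)}$, I obtain $|B_i|\geq(\delta_1\cdots\delta_n)|A_i|$ and $(\phi_j,\eta)$-homogeneity for all $j=1,\dots,n$ simultaneously. By the choice of $\eta$, any two tuples $(a_1,\dots,a_m),(a_1',\dots,a_m')\in B_1\times\cdots\times B_m$ satisfy $|\phi_j(a)-\phi_j(a')|\leq\eta$ for every $j$, and hence $|u(\phi_1,\dots,\phi_n)(a)-u(\phi_1,\dots,\phi_n)(a')|\leq\varepsilon$. Therefore $(B_1,\dots,B_m)$ is $(u(\phi_1,\dots,\phi_n),\varepsilon)$-homogeneous, and the constant $\delta:=\delta_1\cdots\delta_n$ witnesses SEH for the continuous combination.

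The argument is essentially a bookkeeping one; the only genuinely substantive input is uniform continuity of $u$ on the compact cube $[0,1]^n$, and the only subtle point — that passing to subsets preserves $(\phi_j,\eta)$-homogeneity — is immediate from the definition. I expect no real obstacles.
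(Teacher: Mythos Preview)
Your proposal is correct and follows essentially the same approach as the paper: use uniform continuity of $u$ on $[0,1]^n$ to choose a tolerance $\eta$, then iteratively apply SEH for each $\phi_j$ to shrink the sets, using that $(\phi_j,\eta)$-homogeneity is preserved under passing to subsets. The only cosmetic difference is that the paper takes a single constant $\gamma$ (the minimum over $j$) rather than tracking separate $\delta_j$'s and multiplying them.
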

\begin{proof}
    Fix $\varepsilon > 0$, and $A_i \subseteq M^{x_i}$ finite for each $1 \leq i \leq m$.
    By continuity, let $\delta > 0$ be such that if $a,b \in [0,1]^n$ have $\max_i |a_i - b_i| \leq \delta$ in the sup metric, then $|u(a) - u(b)| \leq \varepsilon$.
    By the strong Erd\H{o}s-Hajnal property of $\phi_1,\dots,\phi_n$, there is some $\gamma > 0$ such that for each $1 \leq j \leq n$,
    if $B_i \subseteq M^{x_i}$ are finite, there are $C_i \subseteq B_i$ with $|C_i| \geq \gamma|B_i|$ such that $(C_1,\dots,C_m)$ are $(\phi_j,\delta)$-homogeneous.
    Thus we can set $A_i^0 = A_i$, and recursively define $A_i^j$ such that $A_i^j \subseteq A_i^{j - 1}$, $|A_i^j| \subseteq \gamma|A_i^{j - 1}|$, and
    $(A_1^j,\dots,A_m^j)$ is $(\phi_j,\delta)$-homogeneous.
    Then $(A_1^n,\dots,A_m^n)$ will be $(\phi_j,\delta)$-homogeneous for all $1 \leq j \leq n$, and thus also $(u(\phi_1,\dots,\phi_n),\varepsilon)$-homogeneous.
    Also, $|A_i^n| \geq \delta^n|A_i|$, and $\delta$ did not depend on the choice of $A_i$.
\end{proof}

\begin{lem}
    The strong Erd\H{o}s-Hajnal property is closed under uniform limits: if for $j \in \N$, definable predicates $\phi_j(x) = \phi_j(x_1,\dots,x_m)$ have the strong Erd\H{o}s-Hajnal property and converge uniformly to $\phi(x)$,
    then $\phi(x)$ also has the strong Erd\H{o}s-Hajnal property.
\end{lem}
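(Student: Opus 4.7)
The plan is to use a straightforward three-epsilon argument, exploiting the fact that $(\phi_j, \varepsilon/3)$-homogeneity of a tuple of sets automatically upgrades to $(\phi, \varepsilon)$-homogeneity once $\|\phi - \phi_j\|_\infty \leq \varepsilon/3$. Given $\varepsilon > 0$, I would first use uniform convergence to fix a single index $j$ such that $\sup_x |\phi(x) - \phi_j(x)| \leq \varepsilon/3$. Then I would apply the strong Erd\H{o}s-Hajnal property of $\phi_j$ at tolerance $\varepsilon/3$ to obtain a constant $\delta > 0$ (depending only on $\varepsilon$, since $j$ depends only on $\varepsilon$).

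Next, given arbitrary finite sets $A_i \subseteq M^{x_i}$, SEH for $\phi_j$ produces subsets $B_i \subseteq A_i$ with $|B_i| \geq \delta |A_i|$ which are $(\phi_j, \varepsilon/3)$-homogeneous. The last step is the triangle inequality: for any $(a_1,\dots,a_m), (a_1',\dots,a_m') \in B_1 \times \cdots \times B_m$,
\[
|\phi(a) - \phi(a')| \leq |\phi(a) - \phi_j(a)| + |\phi_j(a) - \phi_j(a')| + |\phi_j(a') - \phi(a')| \leq \tfrac{\varepsilon}{3} + \tfrac{\varepsilon}{3} + \tfrac{\varepsilon}{3} = \varepsilon,
\]
so $(B_1,\dots,B_m)$ is $(\phi,\varepsilon)$-homogeneous, as required.

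There is essentially no obstacle: the only point to verify carefully is that the constant $\delta$ produced by the argument depends on $\varepsilon$ alone and not on the sets $A_i$, which is immediate since both the choice of $j$ and the subsequent application of SEH for $\phi_j$ depend only on $\varepsilon$.
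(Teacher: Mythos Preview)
Your proposal is correct and follows essentially the same argument as the paper: fix an index $j$ with $\|\phi-\phi_j\|_\infty\le\varepsilon/3$, apply the strong Erd\H{o}s--Hajnal property of $\phi_j$ at tolerance $\varepsilon/3$ to obtain $\delta$, and conclude by the triangle inequality. Your write-up is in fact slightly cleaner in that you explicitly note that $\delta$ depends only on $\varepsilon$.
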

\begin{proof}
    Fix $\varepsilon > 0$.
    Let $N$ be large enough that $\sup_x|\phi_N(x) - \phi(x)| \leq \frac{\varepsilon}{3}$.
    Then let $\delta > 0$ be such that if $B_i \subseteq M^{x_i}$ are finite, there are $C_i \subseteq B_i$ with $|C_i| \geq \delta|B_i|$ such that $(C_1,\dots,C_m)$ are $\left(\phi_N,\frac{\varepsilon}{3}\right)$-homogeneous.
    Then if we fix $A_i \subseteq M^{x_i}$ finite for each $1 \leq i \leq m$,
    there are $B_i \subseteq A_i$ for each $i$ with $|B_i| \geq |A_i|$ and for all $a, b \in B_1 \times \dots \times B_m$,
    we have $|\phi(a) - \phi(b)| \leq |\phi(a) - \phi_N(a)| + |\phi_N(a) - \phi_N(b)| + |\phi_N(b) - \phi(B)| \leq \varepsilon$.
\end{proof}

These lemmas show that in a quantifier-elimination language, to determine if all definable predicates in a structure have the strong Erd\H{o}s-Hajnal property, it suffices to check for atomic formulas.

We can also reduce checking the $\varepsilon$-strong Erd\H{o}s-Hajnal property for all $\varepsilon$ to a simpler criterion.
\begin{lem}
    A definable predicate $\phi(x_1,\dots,x_n)$ has the strong Erd\H{o}s-Hajnal property if and only if for all $0 \leq r < s \leq 1$,
    there is some $\delta > 0$ such that such that if $A_1,\dots,A_n$ are finite subsets of $M^{x_1},\dots,M^{x_n}$ respectively, then there are $B_1,\dots,B_n$
    with $B_i \subseteq A_i$ and $|B_i| \geq \delta |A_i|$ such that either for all $b \in B_1 \times \dots \times B_n$,
    $\phi(b) < s$, or for all $b \in B_1 \times \dots \times B_n$, $\phi(b) > r$.
\end{lem}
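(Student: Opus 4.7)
The ``only if'' direction is immediate: given $0 \leq r < s \leq 1$, apply the strong Erd\H{o}s-Hajnal property of $\phi$ with $\varepsilon = (s-r)/3$. On any $(\phi,\varepsilon)$-homogeneous product $B_1 \times \dots \times B_n$, all values of $\phi$ lie in an interval of length at most $(s-r)/3$. If the minimum value of $\phi$ on this product is $\leq (2r+s)/3$, then the maximum is at most $(2r+s)/3 + (s-r)/3 = (r+2s)/3 < s$, so $\phi < s$ everywhere; otherwise the minimum exceeds $(2r+s)/3 > r$, so $\phi > r$ everywhere.

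For the ``if'' direction, the plan is to iteratively apply the dichotomy to a finite family of overlapping thresholds covering $[0,1]$. Given $\varepsilon > 0$, set $m = \lceil 2/\varepsilon \rceil$ and consider the $m$ pairs $(r_i, s_i) = (i\varepsilon/2, (i+1)\varepsilon/2)$ for $i = 0, \dots, m-1$. By hypothesis each pair furnishes some $\delta_i > 0$; let $\delta_* = \min_i \delta_i$. Starting from $A_j^{(0)} = A_j$, at step $i$ I apply the hypothesis for $(r_i, s_i)$ to the current sets to obtain $A_j^{(i+1)} \subseteq A_j^{(i)}$ with $|A_j^{(i+1)}| \geq \delta_* |A_j^{(i)}|$ such that either $\phi < s_i$ or $\phi > r_i$ uniformly on $A_1^{(i+1)} \times \dots \times A_n^{(i+1)}$. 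After $m$ steps, $|A_j^{(m)}| \geq \delta_*^m |A_j|$.

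The key combinatorial observation is that the pattern of choices across the $m$ steps is heavily constrained. Let $I_< \subseteq \{0, \dots, m-1\}$ record the indices where the ``$<$'' branch was chosen. If $i \in I_<$ and $j \notin I_<$, then on the (nonempty) final product both $\phi(b) < (i+1)\varepsilon/2$ and $\phi(b) > j\varepsilon/2$ hold, forcing $j \leq i$. Hence $I_<$ is an upper interval $\{i_0, \dots, m-1\}$ for some $i_0 \in \{0, 1, \dots, m\}$. A short case analysis then shows that on the final product, $\phi$ lies in an interval of length $\leq \varepsilon$: for $0 < i_0 < m$, the constraints $\phi < (i_0+1)\varepsilon/2$ and $\phi > (i_0-1)\varepsilon/2$ pin $\phi$ to an interval of length exactly $\varepsilon$; for $i_0 = 0$ only the upper constraint survives, giving length $\varepsilon/2$; for $i_0 = m$ only $\phi > (m-1)\varepsilon/2$ survives, and $m\varepsilon/2 \geq 1$ forces $\phi \in ((m-1)\varepsilon/2, 1]$ of length $\leq \varepsilon/2$. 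Thus $(A_1^{(m)}, \dots, A_n^{(m)})$ is $(\phi,\varepsilon)$-homogeneous, and we may take $\delta = \delta_*^m$.

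The main subtlety is choosing $m$ large enough that the edge case $i_0 = m$ (all choices being ``$>$'') still confines $\phi$ to a narrow interval near $1$; this is why $m = \lceil 2/\varepsilon \rceil$, rather than $\lceil 1/\varepsilon \rceil$, is needed. Everything else is a direct iteration of the hypothesis, and the resulting $\delta_*^m$ depends only on $\varepsilon$ and the finitely many $\delta_i$, not on the $A_j$.
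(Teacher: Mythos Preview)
Your proof is correct and uses essentially the same iterative strategy as the paper: apply the dichotomy at finitely many evenly spaced threshold pairs and argue that the accumulated constraints trap $\phi$ in a short interval. One minor technicality to patch: with $m = \lceil 2/\varepsilon\rceil$ the top threshold $s_{m-1} = m\varepsilon/2$ can exceed $1$, so the hypothesis does not literally furnish a $\delta_{m-1}$; this is harmless, since $\phi < s$ holds automatically when $s > 1$ and you may take $\delta_{m-1} = 1$ (always selecting the ``$<$'' branch) at that step. Your edge-case analysis is actually more careful than the paper's, which uses step size $1/n$ and asserts $(\phi,1/n)$-homogeneity where the argument as written only yields $2/n$.
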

\begin{proof}
    Suppose $\phi$ has the strong Erd\H{o}s-Hajnal property, fix $0 \leq r < s \leq 1$, and let $0 < \varepsilon < s - r$.
    Then we can find $B_1,\dots,B_n$ of adequate size that are $\varepsilon$-homogeneous, implying that either $\phi(b) > r$ or $\phi(b) < s$ is true for all $b \in B_1 \times \dots \times B_n$.
    
    Conversely, assume this new condition holds.
    We will prove for each $n$ that $\phi$ has the $\frac{1}{n}$-strong Erd\H{o}s-Hajnal property.
    By taking a finite minimum, we can find $\delta > 0$ such that for all $0 \leq i < n$,
    given $A_1,\dots,A_n$, there are $B_i \subseteq A_i$ and $|B_i| \geq \delta |A_i|$ such that either for all $b \in B_1 \times \dots \times B_n$,
    $\phi(b) < \frac{i + 1}{n}$, or for all $b \in B_1 \times \dots \times B_n$, $\phi(b) > \frac{i}{n}$.
    Then by a recursive application of this property for each $r = \frac{i}{n}, \frac{i + 1}{n}$,
    we can find $B_i \subseteq A_i$ with $|B_i| \subseteq \delta^n |A_i|$ that satisfy this property for each $(r,s)$ simultaneously.
    Thus there must be some $i$ such that $b \in B_1 \times \dots \times B_n$, $\frac{i}{n} \leq \phi(b) \leq \frac{i + 1}{n}$,
    so $B_1 \times \dots \times B_n$ is $\left(\phi,\frac{1}{n}\right)$-homogeneous.
    It would suffice to reduce the size of the sets only $\log n$ times by a binary search method, improving the constants if necessary.
\end{proof}

Before trying to determine which metric structures have the strong Erd\H{o}s-Hajnal property for all definable predicates, it makes sense to ask whether the metric has this property.
This is true for ultrametrics.

\begin{lem}
    Let $(X,d)$ be a bounded ultrametric space. The metric $d(x,y)$ has the strong Erd\H{o}s-Hajnal property.
\end{lem}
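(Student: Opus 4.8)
The plan is to apply the criterion of the preceding lemma: fix $0 \le r < s \le 1$; I claim the constant $\delta = \tfrac14$ works, uniformly in $r$ and $s$. Fix finite sets $A, B \subseteq X$. Since $d$ is an ultrametric, the relation $d(x,y) \le r$ is an equivalence relation, so the closed balls of radius $r$ partition $A \cup B$ into finitely many classes; any two points of the same class are at distance $\le r < s$, and any two points of distinct classes are at distance $> r$. For a class $c$ put $a_c = |A \cap c|$ and $b_c = |B \cap c|$, so that $\sum_c a_c = |A|$ and $\sum_c b_c = |B|$.

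It therefore suffices to establish the following combinatorial dichotomy: either some class $c$ has $a_c \ge \tfrac14|A|$ and $b_c \ge \tfrac14|B|$ (then $A' = A \cap c$, $B' = B \cap c$ give $d(a,b) < s$ throughout $A' \times B'$), or there is a partition of the classes into two sets $G_1, G_2$ with $\sum_{c \in G_1} a_c \ge \tfrac14|A|$ and $\sum_{c \in G_2} b_c \ge \tfrac14|B|$ (then $A' = A \cap \bigcup G_1$ and $B' = B \cap \bigcup G_2$ lie in disjoint families of classes, so $d(a,b) > r$ throughout $A' \times B'$). Either way the preceding lemma's criterion is met.

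To prove the dichotomy, first suppose some class $c^*$ satisfies $a_{c^*} \ge \tfrac12|A|$. If also $b_{c^*} \ge \tfrac14|B|$, the first alternative holds. Otherwise the remaining classes together contain more than $\tfrac34|B|$ of $B$, and taking $G_1 = \{c^*\}$ and $G_2$ the set of all other classes gives the second alternative. Now suppose instead that every class has $a_c < \tfrac12|A|$. Add classes one at a time to a set $H$ until $\sum_{c \in H} a_c \ge \tfrac14|A|$ (this must happen, since the total sum is $|A|$); because the last class added has $a_c < \tfrac12|A|$, we also get $\sum_{c \in H} a_c < \tfrac34|A|$, whence both $H$ and its complement $H^c$ contain at least $\tfrac14|A|$ of $A$. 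Since $\sum_{c \in H} b_c + \sum_{c \in H^c} b_c = |B|$, one of $H$, $H^c$ contains at least $\tfrac12|B|$ of $B$; pairing that one (as $G_2$) against the other (as $G_1$) gives the second alternative.

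I expect the greedy balancing step to be the crux: a priori $A$ may be spread over unboundedly many closed $r$-balls with no ball carrying a constant fraction of it, so one cannot just select the largest class, and the control on the overshoot — which is exactly where the case hypothesis $a_c < \tfrac12|A|$ is used — is what salvages a uniform $\delta$. It is also worth noting that one must run this against the $(r,s)$-criterion rather than the $\varepsilon$-oscillation definition directly, since the distances between two unions of distinct $r$-balls are all $> r$ but need not vary by less than $\varepsilon$. Boundedness of the ultrametric is used only so that $d$ is a legitimate $[0,1]$-valued definable predicate.
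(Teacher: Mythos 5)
Your proof is correct, and it shares the paper's basic architecture: cover $A \cup B$ by the disjoint closed $r$-balls (where $d(x,y)\le r$ is an equivalence relation), so that the problem reduces to a purely combinatorial balancing statement about the class sizes, and then invoke the $(r,s)$-criterion. Where you diverge is in how you prove that balancing statement. The paper sets $S = \{i : |A_i|/|A| \ge |B_i|/|B|\}$, notes that either $\sum_{i\in S}|A_i|/|A|$ or $\sum_{i\notin S}|B_i|/|B|$ is at least $\tfrac12$, and then (in the ``no heavy class'' case) takes a minimal $S' \subseteq S$ whose $A$-mass is at least $\tfrac13$; minimality plus the pointwise ratio comparison $|B_i|/|B|\le |A_i|/|A|$ on $S$ keeps the $B$-mass of $S'$ below $\tfrac23$, yielding $\delta = \tfrac13$. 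You instead case-split on whether some single class carries at least half of $A$: if so you treat that class specially, and if not you run a greedy accumulation and use the hypothesis $a_c < \tfrac12|A|$ only to bound the overshoot by one class. This is a genuinely different (and arguably more elementary) way of proving essentially the same dichotomy, at the cost of a slightly weaker constant $\delta = \tfrac14$ rather than $\tfrac13$. The paper's ratio-comparison trick is what buys the sharper constant; your greedy argument is what lets you avoid any ratio bookkeeping. Both are fine. One small remark: the paper phrases its criterion with $0\le r < 1$ and shows $d\le r$ versus $d>r$, which is formally the special case of the $(r,s)$-criterion; your framing in terms of $(r,s)$, and your observation that the class-union conclusion gives a one-sided threshold rather than small oscillation, matches the intent of the criterion lemma exactly.
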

\begin{proof}
    Fix $0 \leq r < 1$, and let $A, B \subseteq X$ be finite.
    We will show that there are $A_0 \subseteq A, B_0 \subseteq B$ with $|A_0| \leq \frac{1}{3}|A|$ and $|B_0| \leq \frac{1}{3}|B|$
    such that either for all $(a,b) \in A_0 \times B_0$, $d(a,b) \leq r$, or for all $(a,b) \in A_0 \times B_0$, $d(a,b) > r$.

    By the ultrametric criterion, $A \cup B$ can be covered with disjoint closed $r$-balls.
    Thus let $A = A_1 \cup \dots \cup A_n$ and $B = B_1 \cup \dots \cup B_n$, where $A_1 \cup B_1$ is contained in a closed $r$-ball,
    but for $i \neq j$, if $u \in A_i \cup B_i$ and $v \in A_j \cup B_j$, then $d(u,v) > r$.
    If there is some $i$ with $|A_i| \geq \frac{1}{3}|A|$ and $|B_i| \geq \frac{1}{3}|B|$, then we can let $A_0 = A_i$ and $B_0 = B_i$.
    Let $S \subseteq \{1,\dots,n\}$ be the set of all $i$ such that $\frac{|A_i|}{|A|} \geq \frac{|B_i|}{|B|}$.
    We can see that $\sum_{i \in S} \frac{|A_i|}{|A|} \geq \sum_{i \in S} \frac{|B_i|}{|B|} = 1 - \sum_{i \not \in S} \frac{|B_i|}{|B|}$,
    from which we can deduce that either $\sum_{i \in S} \frac{|A_i|}{|A|} \geq \frac{1}{2}$ or $\sum_{i \not \in S} \frac{|B_i|}{|B|} \geq \frac{1}{2}$.
    Without loss of generality, assume the former.
    In this case, choose a minimal set $S' \subseteq S$ with $\sum_{i \in S'} \frac{|A_i|}{|A|} \geq \frac{1}{3}$.
    By minimality, for any one $i' \in S'$, $\sum_{i \in S', i \neq i'} \frac{|A_i|}{|A|} < \frac{1}{3}$,
    and thus $\sum_{i \in S', i \neq i'} \frac{|B_i|}{|B|} \leq \frac{1}{3}$.
    By assumption, either $|A_{i'}| < \frac{1}{3}|A|$ and $|B_{i'}| < \frac{1}{3}|B|$.
    As $i' \in S$, meaning $\frac{|A_{i'}|}{|A|} \geq  \frac{|B_{i'}|}{|B|}$,
    we can deduce that $\frac{|B_{i'}|}{|B|} < \frac{1}{3}$,
    so $\sum_{i \in S'} \frac{|B_i|}{|B|} \leq \frac{2}{3}$,
    and $\sum_{i \not\in S'} \frac{|B_i|}{|B|} \geq \frac{1}{3}$.
    Thus we can let $A_0 = \cup_{i \in S'} A_i$ and let $B_0 = \cup_{i \not \in S'} B_i$,
    and get $|A_0| \geq \frac{1}{3}|A|$ and $|B_0| \geq \frac{1}{3}|B|$.
    If $a \in A_0$ and $b \in B$, then there are $i \in S'$ and $j \not \in S'$ with $a \in A_i$ and $b \in B_j$,
    so as $i \neq j$, $d(a,b) > r$.
\end{proof}

\section{Valued Fields}\label{sec_vf}

In \cite{mvf}, Ben Yaacov set up a framework for studying fields with $(\R_{\geq 0},*)$-valued valuations as metric structures.
More specifically, the metric structures are projective spaces over such fields.
\begin{defn}
    Given a field $K$, let $K\mathbb{P}^n$ denote the $n$-dimensional projective space over $K$, whose elements we write in homogeneous coordinates as $[x_0:x_1: \dots : x_n]$,
    which we will generally assume satisfy $\max_i |x_i| = 1$.

    Let $\mathcal{L}_{\mathbb{P}^1}$ be the language considering of the constant symbol $\infty$ and, for each $n \in \N$ and each polynomial $P(x_1,\dots,x_n) \in \Z[x_1,x_1,\dots,x_n]$,
    a relation symbol $||P(\bar x)||$. 

    Given a field $K$ with a multiplicative valuation $|\cdot |$ taking values in $\R_{\geq 0}$,
    we interpret $K\mathbb{P}^1$ as an $\mathcal{L}_{\mathbb{P}^1}$-structure as follows, using homogeneous coordinates:
    \begin{align*}
        d([a:a^*],[b:b^*]) &= |ab^* - a^*b|\\
        \infty &= [1:0]\\
        ||P([a_1:a_1^*],\dots, [a_n:a_n^*])|| &= |P^h(a_1,\dots,a_n,a_1^*,\dots,a_n^*)|,
    \end{align*}
    where $P^h$ is the \emph{homogenization} of $P$.
\end{defn}

\begin{fact}[{\cite[Theorem 1.8]{mvf}}]
    There is a theory $MVF$ in the language $\mathcal{L}_{\mathbb{P}^1}$,
    whose models are (up to isomorphism) exactly the projective lines of valued fields with complete valuation.
\end{fact}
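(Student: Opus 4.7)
The plan is to exhibit an explicit axiomatization $MVF$, show that every projective line $K\mathbb{P}^1$ of a valued field satisfies it, and conversely that every model can be reconstructed as such a projective line. Since the statement cites \cite[Theorem 1.8]{mvf}, I will sketch the strategy one would follow to prove it from scratch.

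First I would write down a list of axiom schemes in $\mathcal{L}_{\mathbb{P}^1}$ that must hold in any $K\mathbb{P}^1$. The natural choices are: (i) a scheme forcing the metric $d$ to be $|ab^* - a^*b|$ by identifying it with $\|x_1 y_2 - x_2 y_1\|^h$; (ii) for each pair $P, Q \in \mathbb{Z}[\bar x]$ a multiplicativity axiom $\|PQ\| = \|P\|\,\|Q\|$; (iii) a sub-additivity/triangle axiom for $\|P + Q\|$ (expressed as the appropriate continuous relation between $\|P\|$, $\|Q\|$ and $\|P+Q\|$); (iv) coherence axioms stating that $\|P\|$ depends only on the projective coordinates via the chosen normalization $\max_i |x_i| = 1$; (v) axioms fixing the role of $\infty = [1:0]$, including the values $\|P(\dots,\infty,\dots)\|$ computed via the homogenization; and (vi) whatever existential axioms are needed to make the valued field suitably large, e.g.\ the existence of zeros of certain polynomials so that the affine line is closed under the field operations. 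Checking that every $K\mathbb{P}^1$ satisfies these is straightforward from the definition.

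The substantive step is the converse: given $M \models MVF$, produce a valued field $(K, |\cdot|)$ with $M \cong K\mathbb{P}^1$. The idea is to work in an affine patch by fixing $\infty \in M$ as a designated point, and use the other elements of $M$ as (a set of representatives for) $K$. Addition and multiplication on $K$ should be recovered as the unique operations compatible with the relations $\|x + y - z\|$ and $\|xy - z\|$: the axioms guarantee that there is a unique $z$ making these relations vanish, so the graphs of $+$ and $\cdot$ are definable. The valuation is recovered from the distance to $0$ or, more uniformly, from $\|x\|$ evaluated at single-variable monomials. One then checks the field axioms and the multiplicative valuation axioms from the corresponding polynomial identities in $MVF$. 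Completeness of the valuation follows automatically because models of a continuous logic theory are metrically complete, and the metric on $K\mathbb{P}^1$ restricts to a metric on $K$ equivalent to the valuation metric on bounded regions.

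The main obstacle I expect is the bookkeeping needed to ensure the axiomatization is strong enough: one must include enough polynomial identities so that the entire field structure, and not just fragments of it, is recovered from the $\|P\|$-relations, while keeping the axioms first-order in continuous logic. In particular, encoding the ultrametric-versus-archimedean dichotomy and the existence of reciprocals (which is what $\infty$ is really there for) via continuous axioms requires a careful choice of which polynomials $P$ to include in the axiom schemes. Once these are in place, the reconstruction of $K$ from $M$ and the isomorphism $M \cong K\mathbb{P}^1$ are essentially formal.
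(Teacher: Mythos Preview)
The paper does not contain a proof of this statement: it is stated as a \textbf{Fact} and attributed to \cite[Theorem~1.8]{mvf}, so there is nothing here to compare your proposal against directly. Your outline is broadly the right shape for how such a result is established---axiomatize the behavior of the predicates $\|P\|$, verify soundness on actual projective lines, and then reconstruct the field from an arbitrary model by carving out the affine patch $M\setminus\{\infty\}$ and reading off addition, multiplication, and the valuation from the relations $\|x+y-z\|$, $\|xy-z\|$, $\|x\|$.

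A couple of points in your sketch are loose enough to flag. First, the ``existential axioms \dots\ to make the valued field suitably large'' comment is off: the issue is not largeness but rather that the field operations must be \emph{total} and well-defined on the affine patch, which requires axioms guaranteeing that for every $x,y$ there is a unique $z$ with $\|x+y-z\|=0$ (and similarly for multiplication and inverses). Second, your remark about ``encoding the ultrametric-versus-archimedean dichotomy'' is misleading: the theory $MVF$ is meant to cover both cases uniformly, so no dichotomy is encoded---rather, the axioms must be weak enough to accommodate both the ultrametric inequality and the ordinary triangle inequality. If you want to see the actual axiomatization and the details of the reconstruction, you will need to consult the cited reference.
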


We also can consider a language with more sorts, to encompass all projective spaces over $K$ in one structure:
\begin{defn}
    Let $\mathcal{L}_{\mathbb{P}}$ be the language with sorts $(P^n : n \in \N)$ with the following symbols:
    \begin{itemize}
        \item For each $m,n$, a function $\tensor: \mathbb{P}^m \times\mathbb{P}^n \to \mathbb{P}^{n + m + nm}$
        \item For each $A \in SL_{n+1}(\Z)$, a function $A : \mathbb{P}^n \to \mathbb{P}^n$
        \item For each $n$, a predicate symbol $||\cdot||$ on $\mathbb{P}^n$.
    \end{itemize}

    Given any field $K$ with a multiplicative valuation $|\cdot |$ taking values in $\R_{\geq 0}$,
    we construct an $\mathcal{L}_{\mathbb{P}}$-structure $K\mathbb{P}$ by interpreting $\mathbb{P}^n$ as $K\mathbb{P}^n$.
    We interpret the $\tensor$ symbols as Segre embeddings, interpret the special linear transformation symbols with their natural action on
    $K^{n+1}$, each of which respects the quotient relation that defines $K\mathbb{P}^n$.
    We can then define the other symbols by
    \begin{align*}
        ||[a_0:\dots : a_n]|| &= |a_0|\\
        d(a,b)&=\max_{i < j}|a_ib_j - a_jb_i|.
    \end{align*}
\end{defn}

\begin{fact}
    The $\mathcal{L}_{\mathbb{P}}$-structure $K\mathbb{P}$ and $\mathcal{L}_{\mathbb{P}^1}$-structure $K\mathbb{P}^1$ induced by a valued field $K$ are biinterpretable. 
\end{fact}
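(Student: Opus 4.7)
The plan is to exhibit interpretations in each direction and verify that the two round-trips are definably the identity; in continuous logic these interpretations must be by uniformly continuous definable predicates, and this is where care will be needed.

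The direction $\mathcal{L}_{\mathbb{P}^1}\hookrightarrow\mathcal{L}_{\mathbb{P}}$ is easy in outline: the sort $\mathbb{P}^1$ is already part of $\mathcal{L}_{\mathbb{P}}$, so I would only need to define $\infty=[1:0]$ and each predicate $\|P(\bar x)\|$ with $P\in\Z[x_1,\dots,x_n]$. The key observation is that the homogenization $P^h$ of a polynomial $P$ of degree $d$ is an integer linear form on an appropriate tensor power. Iterating the Segre embeddings $\tensor$ sends $(x_1,\dots,x_n)\in(\mathbb{P}^1)^n$ to a point of $\mathbb{P}^N$ whose homogeneous coordinates enumerate the relevant degree-$d$ monomials in the coordinates of the $x_i$'s; a suitably chosen element of $SL_{N+1}(\Z)$ then moves the $\Z$-linear combination corresponding to $P^h$ into the leading coordinate, whose value is returned by $\|\cdot\|$. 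Integer coefficients are accommodated by the fact that any primitive integer vector extends to a row of a matrix in $SL_{N+1}(\Z)$, while a nonprimitive one is dealt with by an overall scalar factor.

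For the converse direction $\mathcal{L}_{\mathbb{P}}\hookrightarrow\mathcal{L}_{\mathbb{P}^1}$, one interprets each sort $\mathbb{P}^n$ by identifying a point of $K\mathbb{P}^n$ with the $(n+1)$-tuple of its homogeneous coordinates viewed in $K\cup\{\infty\}\cong K\mathbb{P}^1$, modulo the equivalence given by proportionality. Both the domain of ``valid'' tuples and the equivalence relation are cut out by the simultaneous vanishing of polynomial relations among representatives (of the form $a_ib_j-a_jb_i=0$ and the like), all expressible via the predicates $\|P(\bar x)\|$ in $\mathcal{L}_{\mathbb{P}^1}$. Under this identification, the Segre embedding, the $SL_{n+1}(\Z)$-action, the metric on $\mathbb{P}^n$, and the norm $\|\cdot\|$ are given by explicit homogeneous polynomial expressions, hence interpretable.

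The main obstacle is bookkeeping around the normalization $\max_i|a_i|=1$: the naive affine chart $a\mapsto[a:1]$ is undefined at $\infty$, so one must work projectively throughout and check that the resulting formulas are uniformly continuous on all of $K\mathbb{P}^n$, a condition stronger than mere definability in the classical sense. Once this is handled, the two composed interpretations are seen to be the identity after the (definable) normalization step, establishing biinterpretability.
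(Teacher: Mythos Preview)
The paper does not prove this statement: it is recorded as a \emph{Fact} with no argument, the framework being taken over from \cite{mvf}. There is therefore nothing in the paper to compare your proposal against.

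As for the proposal itself, the first direction is indeed routine along the lines you sketch. For the converse, your plan of coding $[a_0:\cdots:a_n]\in K\mathbb{P}^n$ by the tuple $(a_0,\dots,a_n)$ with each $a_i$ regarded as a point of $K\mathbb{P}^1$, then quotienting by proportionality, is the natural idea but is not yet a proof. Two points deserve to be made explicit beyond ``bookkeeping'': first, in continuous logic an interpretation requires a \emph{definable} (hence closed) domain equipped with a definable pseudometric whose metric quotient is the target sort, so the condition ``not all coordinates zero'' (an open condition) must be replaced by a closed normalization such as $\max_i|a_i|=1$, and one must check that this closed set really is definable from the $\|P\|$-predicates; second, the embedding $a\mapsto[a:1]$ of $K$ into $K\mathbb{P}^1$ is not isometric for large $|a|$, so one must verify that the quotient pseudometric induced on normalized tuples is uniformly equivalent to the intended metric on $K\mathbb{P}^n$. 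You flag normalization as the main obstacle, which is correct, but the sketch stops short of saying how either of these is handled.
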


The theory $MVF$ admits a natural algebraically closed completion:
\begin{fact}[{\cite[Lemma 2.2]{mvf}}]
    There is a $\mathcal{L}_{\mathbb{P}^1}$-theory $ACMVF$ whose models are precisely the projective lines over algebraically closed fields
    with nontrivial complete valuations.
\end{fact}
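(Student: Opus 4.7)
The plan is to obtain $ACMVF$ from $MVF$ by adding two axiom schemes, one expressing algebraic closure of the underlying field and one expressing nontriviality of the valuation. Since the $MVF$ fact already tells us that every model is a projective line of a valued field with complete valuation, the remaining work will be to capture these two additional field-theoretic conditions inside $\mathcal{L}_{\mathbb{P}^1}$ and check that the resulting models are exactly as claimed.

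For algebraic closure, the first step is to add, for each $n \geq 1$, an axiom of the form
$$\sup_{y_0,\ldots,y_{n-1}} \inf_{x}\|x^n + y_{n-1}x^{n-1} + \cdots + y_0\| = 0,$$
where the quantifiers range over the projective line and $\|\cdot\|$ is the $\mathcal{L}_{\mathbb{P}^1}$-relation associated with the polynomial in $\mathbb{Z}[x,y_0,\ldots,y_{n-1}]$ displayed inside the norm. For nontriviality, I would add an axiom asserting the existence of an element whose valuation lies strictly between $0$ and $1$, for instance via an appropriate continuous statement based on $\inf_x \max(\|x\|, 1-\|x\|)$ or on distances between $0$, $\infty$, and a witnessing element.

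Next I would verify both directions. Soundness is essentially immediate: a projective line over an algebraically closed, nontrivially valued, complete field makes every monic polynomial vanish at some point, and exhibits an element with valuation in $(0,1)$. For the other direction, the $MVF$ fact identifies any model as $K\mathbb{P}^1$ for some complete valued field $K$; the algebraic closure axioms then produce roots in $K\mathbb{P}^1$ for every monic polynomial, and since $[1:0]$ is not a root of a monic polynomial of positive degree, these roots lie in $K$, giving algebraic closure. The nontriviality axiom guarantees the value group is nontrivial.

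The main obstacle I foresee is the careful handling of the projective/homogenized formulation: one must check that the $\inf_x$ is actually attained on $K\mathbb{P}^1$ in a complete model (via compactness of the projective line), that the translation from polynomials over $K$ to $\mathcal{L}_{\mathbb{P}^1}$-sentences via homogenization correctly captures "having a root", and that the point at infinity does not interfere with extracting a root in $K$. Once these technical points are resolved, the equivalence between models of $ACMVF$ and the stated class is a direct consequence of the corresponding characterization for $MVF$.
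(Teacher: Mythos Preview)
The paper does not prove this statement: it is recorded as a \texttt{fact} with a citation to \cite[Lemma 2.2]{mvf}, so there is no in-paper argument to compare against. Your outline is the natural one and is essentially what the cited reference does---take $MVF$, adjoin for each $n$ a sentence forcing every monic degree-$n$ polynomial to have a root, and adjoin a sentence forcing the valuation to be nontrivial.

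One point to tighten: as you note yourself, the quantifiers in your displayed axiom range over $K\mathbb{P}^1$, not over $K$, and the symbol $\|P\|$ is interpreted via the homogenization $P^h$ evaluated at representatives with $\max_i|x_i|=1$. You should check explicitly that when some $y_i$ is at or near $\infty$ the sentence does not become vacuous or false in the intended models; in practice one handles this either by restricting the supremum to coefficients bounded away from $\infty$ (which is expressible via $d(y_i,\infty)$) or by observing that the homogenized statement still correctly detects roots after clearing denominators. Similarly, your nontriviality axiom as written, $\inf_x \max(\|x\|,1-\|x\|)$, needs to be checked against the projective interpretation of $\|x\|$; a cleaner formulation uses the metric directly, e.g.\ asserting that $\inf_x \max\bigl(d(x,0),\,|d(x,\infty)-1/2|\bigr)=0$ or something in that spirit. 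These are exactly the ``technical points'' you flag, and once they are handled your argument goes through.
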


To define the theory of real closed metric valued fields, we extend the language:
\begin{defn}
    Extend $\mathcal{L}_{\mathbb{P}^1}$ to the language $\mathcal{L}_{o\mathbb{P}^1}$ by adding a for each such polynomial $P(x_1,\dots,x_n)$ an extra symbol $\langle P(\bar x)\rangle$,
    which we interpret as
    $$\langle P(\bar x)\rangle = d(P(\bar x),\textrm{Sq}),$$
    where Sq is the (closed in any metric valued field) set of squares.
\end{defn}
In a real closed ordered field, Sq is also the set of nonnegative elements, so we can naturally think of this as encoding a linear ordering.
This gives rise to the languages $RCMVF$ and $ORCMVF$ of (ordered) real closed metric valued fields:
\begin{fact}[{\cite[Proposition 3.6, Theorem 3.11]{mvf}}]
    There are a $\mathcal{L}_{\mathbb{P}^1}$-theory $RCMVF$ and a $\mathcal{L}_{o\mathbb{P}^1}$-theory $ORCMVF$
    such that the models of $RCMVF$ are exactly the projective lines of real closed fields with complete non-trivial valuations,
    and models of $ORCMVF$ are exactly the projective lines of such fields where the extra predicate is the distance predicate to the set of nonnegative elements.

    Furthermore, any model of $RCMVF$ admits a unique expansion modelling $ORCMVF$.
    In this expansion, the extra predicate is the distance predicate to the set of nonnegative elements.
\end{fact}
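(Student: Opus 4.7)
The plan is to explicitly axiomatize the two theories and then verify the characterization of their models by combining classical results on real closed valued fields with the construction of $MVF$ from \cite{mvf}.

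First, I would build $RCMVF$ by extending the $MVF$ axioms with continuous-logic renditions of the real-closed-field axioms: for every odd $n$, every monic degree-$n$ polynomial has a root (expressible by $\inf$-style axioms on the projective coordinates using the predicates $||P(\bar x)||$); every element is either a square or the negation of a square; and the valuation is non-trivial (an existential axiom asserting some element has norm strictly between $0$ and $1$). That the resulting models are exactly the projective lines of real closed fields with complete, non-trivial valuations should follow by the same completion argument as for $ACMVF$ in \cite{mvf}, using that real closed fields form an elementary class in classical logic and that metric completion of a valuation does not disturb real-closedness.

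For $ORCMVF$, I would additionally impose that $\langle P(\bar x)\rangle$ equals the distance to the set of squares. Axiomatize this by the inequality $\langle P \rangle \leq ||P - Q^2||$ for all $Q$ (a $\sup_P\inf_Q$ axiom), together with axioms forcing the infimum to be attained up to every $\varepsilon > 0$. Closedness of the set of squares in the valuation metric follows from the continuity of squaring and metric completeness, ensuring that the distance predicate is well-defined and that $\langle P \rangle$ is actually realized by some square.

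For uniqueness of the expansion, the key observation is that in a real closed field the set of squares coincides with the set of non-negative elements under the unique compatible ordering, hence is already determined by the pure $RCMVF$ structure. Any expansion satisfying the $ORCMVF$ axioms must therefore interpret $\langle P \rangle$ as the distance to this same fixed set, yielding uniqueness. The main difficulty I foresee is showing that this distance function is actually a \emph{definable} predicate in the sense of continuous logic, i.e., uniformly approximable by $\mathcal{L}_{\mathbb{P}^1}$-formulas; this would reduce to some quantifier-elimination or quantifier-reduction statement for $RCMVF$, mirroring the classical quantifier elimination for real closed valued fields.
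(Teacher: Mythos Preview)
The paper does not prove this statement at all: it is recorded as a \emph{Fact} and attributed to \cite[Proposition 3.6, Theorem 3.11]{mvf}, with no argument given in the present paper. So there is no ``paper's own proof'' to compare your proposal against; you are sketching a proof of a cited background result.

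That said, your outline is a plausible reconstruction of what the cited reference does. A few cautions if you intend to flesh it out. First, the axiom ``every element is a square or minus a square'' is not quite what you want in continuous logic on the projective line; you need to phrase everything via the $||P||$ predicates on homogeneous coordinates, and the relevant condition is that $-1$ is not a square together with the odd-degree root axioms. Second, your claim that ``metric completion of a valuation does not disturb real-closedness'' needs justification: completion of an ordered field preserves the order and Henselianity, but you should check that a Henselian ordered field whose residue field is real closed and whose value group is divisible is itself real closed (or invoke the corresponding transfer principle). Third, for the definability of the distance-to-squares predicate you correctly identify the crux, but note that in \cite{mvf} this is handled not by abstract quantifier reduction but by an explicit computation showing that in an ordered valued field the distance from $a$ to the set of squares is given by a concrete piecewise formula in $|a|$ and the sign of $a$; that is what makes the expansion \emph{definable} in $RCMVF$ and hence unique.
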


We can now show that these theories are distal.
\begin{thm}
RCMVF is distal.
\end{thm}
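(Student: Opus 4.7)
The plan is to verify distality via the indiscernible-sequence characterization for metric structures developed in \cite{anderson1}: it is enough to show that for every $\emptyset$-indiscernible sequence $(a_i)_{i \in I_1 + I_2}$ with $I_1$ and $I_2$ endless dense linear orders, every parameter tuple $b$, and every $a^*$ realizing the Ehrenfeucht--Mostowski type in the gap, the type $\mathrm{tp}(a^*/b,(a_i))$ is forced by $\mathrm{tp}((a_i)/b)$. Since distality is preserved under reducts (by extracting an $ORCMVF$-indiscernible subsequence of any $RCMVF$-indiscernible sequence), it suffices to prove the theorem for the expansion $ORCMVF$, where the ordering is encoded by the distance-to-squares predicate and a useable form of quantifier elimination is available.

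Using quantifier elimination for $ORCMVF$ together with the closure of the distality condition under continuous combinations and uniform limits (analogous to the lemmas in Section \ref{sec_seh}), I would reduce the problem to checking the indiscernibility condition one atomic predicate at a time: the polynomial norms $\|P(x,b)\|$ and the distance-to-squares $\langle P(x,b)\rangle$. For a single element $a$ over parameters $b$, the type is determined by a familiar two-part package: the order-theoretic cut of $a$ in the real closure of the field generated by $b$, together with the valuation-theoretic data recording how $|a - c|$ varies as $c$ ranges over that real closure. Indiscernibility forces this data to follow a rigid pattern across the sequence---either the $a_i$ stabilize inside a common valuation ball over $b$, in which case the residual information is an order cut and one argues by the distality of the underlying densely ordered structure, or the $a_i$ pseudo-converge through strictly nested balls around some center definable over $b$ or around a pseudo-limit thereof.

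The main obstacle is the pseudo-Cauchy case, where the valuation and order data interact nontrivially. Here the decisive input is that the valuation is complete, so any pseudo-limit realized in a model is uniquely determined by the pseudo-Cauchy pattern; the ultrametric tree structure on valuation balls then makes the relevant analysis tractable, with the final lemma of Section \ref{sec_seh} serving as a template for how ultrametric distances behave under indiscernibility. Combined with the density and the $o$-minimal-like behavior of the order predicate, the continuity of polynomial predicates with respect to the valuation metric should force $\|P(a^*,b)\|$ and $\langle P(a^*,b)\rangle$ to take the unique values compatible with the tail and head of the sequence, yielding distality.
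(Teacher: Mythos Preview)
Your opening moves match the paper's: pass to $ORCMVF$ (harmless, since the order predicates are already $RCMVF$-definable, so this is a definitional expansion rather than a genuine reduct---your remark about ``distality preserved under reducts'' is phrased backwards, but the conclusion is fine), invoke the indiscernible-sequence criterion from \cite{anderson1}, and use quantifier elimination to reduce to atomic predicates $\|P\|$ and $\langle P\rangle$.

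Where you diverge is in the endgame, and here the paper is both shorter and sharper. Rather than attempting a semantic classification of $1$-types via cut-plus-valuation data and a case split into ``stabilizing'' versus ``pseudo-Cauchy'' behavior, the paper pushes the reduction one step further: as in the proof of \cite[Theorem~3.12]{mvf}, every atomic predicate $\varphi(x;\bar y)$ is a continuous combination of terms of the form $|x - f(\bar y)|$ and $\langle x - f(\bar y)\rangle$ with $f$ a partial $\emptyset$-definable function. Fixing the outer sequence elements and letting the middle slot $y$ vary, one gets a single function $f_0(y)$ whose values along the indiscernible segment are monotone in the field order. The punchline is then a one-line convexity argument: for fixed $r,s$, the set $\{y : |d - f_0(y)| = r,\ \langle d - f_0(y)\rangle = s\}$ is order-convex (this uses that the valuation ring in a real closed valued field is convex for the order), so the value at the inserted point is squeezed between the constant values on either side.

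Your proposed route might be salvageable, but as written it has a real gap. The claim that ``the valuation is complete, so any pseudo-limit realized in a model is uniquely determined by the pseudo-Cauchy pattern'' is false: metric completeness controls Cauchy sequences in the valuation metric, not pseudo-Cauchy sequences in Kaplansky's sense, and transcendental immediate types have many realizations even over complete fields. Without that uniqueness, the pseudo-Cauchy case of your sketch is left hanging on ``should force,'' which is exactly the step the paper's order-convexity argument handles cleanly and uniformly, with no case split at all.
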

\begin{proof}
By \cite[Theorem 5.22]{anderson1}, it suffices to check that if $(a_i : i \in \Q) + b + (c_j : j \in \Q)$ is an indiscernible sequence, with $(a_i: i \in \Q) + (c_j : j \in \Q)$ indiscernible over a singleton $d$,
then $(a_i : i \in \Q) + b + (c_j : j \in \Q)$ is indiscernible over $d$ also.

Let $i_0<\dots <i_{n-1} \in \Q$ and $i_{n+1} < \dots <i_{2n} \in \Q$. We will show that for all $i_n> i_{n-1}$, and all $\varphi(x;y_0,\dots,y_{2n})$, $\varphi(d;a_{i_0},\dots,a_{i_n},c_{i_{n+1}},\dots,c_{i_{2n}}) = \varphi(d;a_{i_0},\dots,a_{i_{n-1}}, b,c_{i_{n+1}},\dots,c_{i_{2n}})$.

By quantifier elimination, it suffices to show that if $\varphi(x;y_0,\dots,y_{2n})$ is an atomic $\mathcal{L}_{o\mathbf{P}^1}$-formula of either the form $\norm{P(x;\bar y)}$ or $\langle P(x;\bar y)\rangle$, then $\vDash \varphi(d;\bar a) = \varphi(d;\bar a')$, whenever $\bar a$ and $\bar a'$ are increasing sequences of length $2n+1$ in $(a_i : i \in \Q) + b + (c_j : j \in \Q)$.
As in the proof of \cite[Theorem 3.12]{mvf}, we find that $\varphi(x;\bar y)$ is a continuous combination of things of the form $\abs{x - f(\bar y)}$ and $\langle x - f(\bar y)\rangle$, where $f$ is a partial $\emptyset$-definable function.
Thus it will suffice to show the desired result for $\varphi$ of those forms.
Given $y$, let $f_0(y) = f(a_{i_0},\dots,a_{i_{n-1}}, y,c_{i_{n+1}},\dots,c_{i_{2n}})$.
We wish to show that $\abs{d - f_0(y)}$ and $\langle d - f_0(y)\rangle$ are constant on the indiscernible sequence $I = (a_i: i > i_{n-1}) + b + (c_i : i < i_{n+1}$.
The sequence $f_0(y): y \in I$ will itself be indiscernible, and thus monotone, and $f_0(y): y \in I \setminus \{b\}$ is indiscernible over $d$, so $\abs{d - f_0(y)}$ and $\langle d - f_0(y)\rangle$ are constant over $I \setminus \{b\}$.As for any values $r,s$, the set of $y$ such that $\abs{d - f_0(y)} = r$ and $\langle d - f_0(y)\rangle = s$ is order-convex, we see that $\abs{d - f_0(y)}$ and $\langle d - f_0(y)\rangle$ must also be constant on all of $I$ as desired.
\end{proof}



It is also possible to interpret ACMVF$_{(0,0)}$ in RCMVF, and thus show the (not definable) strong Erd\H{o}s-Hajnal property for that stable theory.
In general, if $K$ is a metric valued field, it is complete and thus Henselian, so if $L / K$ is a finite-degree field extension, and thus $L$ is a metric valued field with the unique valuation extending the valuation on $K$.
We claim that $L$ is interpretable in $K$, and as a consequence, ACMVF$_{(0,0)}$ is interpretable in RCMVF.

\begin{thm}
    Let $K$ be a metric valued field, and let $L$ be a finite extension of $K$, with the unique valuation extending that of $K$. Then $L$ is interpretable in $K$.
\end{thm}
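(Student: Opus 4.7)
The plan is to use a $K$-basis to identify $L$ with $K^n$ (where $n = [L:K]$) and transfer all of the metric-valued-field structure of $L$ into $K$ via polynomial expressions. Fix a $K$-basis $e_0,\dots,e_{n-1}$ of $L$ with multiplication table $e_i e_j = \sum_k c_{ij}^k e_k$. Under the resulting $K$-linear bijection $L \cong K^n$, addition is coordinate-wise, multiplication is a fixed $K$-bilinear map whose $n$ component functions are polynomials in $2n$ variables over $K$, and inversion is rational (by Cramer's rule, once the valuation is nonzero). Hence the purely field-theoretic structure of $L$ is definable in $K\mathbb{P}$ from the existing predicates $\|P\|$.

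The key point is the valuation. Since $K$ is complete it is Henselian, so $|\cdot|_K$ extends uniquely to $L$ via
\[
|x|_L \;=\; |N_{L/K}(x)|_K^{1/n},
\]
where $N_{L/K}(x) = \det(M_x)$ for $M_x : L \to L$ the multiplication-by-$x$ operator. In our basis, $N_{L/K}$ is a fixed homogeneous polynomial of degree $n$ in the coordinates, and the $n$th root is continuous on $[0,1]$, so $|x|_L$ becomes an explicit continuous combination of $\mathcal{L}_{\mathbb{P}}$-predicates evaluated on the $K$-coordinates of $x$. This gives the metric-valued-field structure of $L$ inside $K$.

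To promote this to the projective sorts, encode a point $[v_0:\dots:v_m] \in L\mathbb{P}^m$ by the tuple of basis coordinates of $v_0, \dots, v_m$, viewed as a point of $K\mathbb{P}^{(m+1)n-1}$ (after normalizing so that $\max |a_{i,j}|_K = 1$). Two such tuples represent the same point of $L\mathbb{P}^m$ iff they differ by the action of some $\lambda \in L^\times$; by the polynomial description of $L$-multiplication, this is a definable equivalence relation, and more precisely the natural $L\mathbb{P}^m$-distance between two representatives is a definable pseudo-predicate on $K\mathbb{P}^{(m+1)n-1}$. Taking the resulting definable metric quotient yields $L\mathbb{P}^m$ as an imaginary sort of $K\mathbb{P}$, and the homogenized polynomial predicates $\|P\|_L$ and the distance predicate on $L\mathbb{P}^1$ descend to definable predicates via the formulas of the previous paragraphs.

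\emph{Main obstacle.} The conceptual input --- finite-dimensional $K$-algebra plus Henselian uniqueness of the valuation extension --- is essentially classical, so the difficulty is bookkeeping: one must carefully verify that after the normalization above, the $L\mathbb{P}^m$-pseudometric and the predicates $\|P\|_L$ descend to uniformly continuous bounded predicates in the metric-structure sense, and that the quotient really is (isomorphic to) the standard $L\mathbb{P}$ with its intended theory. The work reduces to writing down explicit continuous formulas and checking their moduli of uniform continuity; no new model-theoretic ingredient is needed beyond completeness of $K$.
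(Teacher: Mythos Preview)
Your proposal is correct and follows essentially the same approach as the paper: encode $L$ via a $K$-basis and recover the $L$-valuation as $|N_{L/K}(x)|^{1/n}$, using Henselianity of the complete field $K$, then check that the homogenized predicates pull back to definable predicates after the appropriate normalization correction. The paper's only variation is that it represents $L\mathbb{P}^1$ inside $K\mathbb{P}^d$ via $[X_0:\dots:X_d] \mapsto [\sum_{i<d} X_i\alpha^i : X_d]$ (keeping the second homogeneous coordinate in $K$), a slightly leaner encoding than your $K\mathbb{P}^{2n-1}$-with-$L^\times$-quotient, but the core computation is identical.
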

\begin{proof}
    Let $d = [L : K]$, and let $\alpha \in L$ be the root of a monic degree-$d$ polynomial in $K[X]$ such that $L = K(\alpha)$.
    
    Roughly speaking, we will represent an element $W \in L\mathbb{P}^1$ with two elements of $K\mathbb{P}^d$, spelling out $W$ and $W^{-1}$ in the $\alpha$-basis.
    From the construction of $K\mathbb{P}$, we see that for any homogeneous polynomial $P \in \Z[X_0,\dots,X_d]$,
    $|P(X_0,\dots,X_d)|$, evaluated at a representative where $\bigcap_{i = 0}^d |X_i| = 1$, is a definable predicate on $K\mathbb{P}^d$ without parameters.
    If instead $P \in K[X_0,\dots,X_d]$, this will be definable with parameters.

    Our interpretation will use the element $[X_0:\dots:X_d] \in K\mathbb{P}^n$ to represent $[\sum_{i = 0} X_i \alpha^i : X_d] \in L\mathbb{P}^1$.
    This is well-defined, and it is surjective because any $[Y : 1]$ can be represented by some $[X_0:\dots:X_{d-1} : 1]$, and the single point at infinity $[1:0]$ can be represented by $[1:0:\dots:0]$.
    We wish to show that for any polynomial $P \in \Z[Y_1,\dots,Y_n,Y_1^*,\dots,Y_n^*]$, the predicate $||P||$, evaluated at $[X_{01}:\dots:X_{d1}],\dots,[X_{0n}:\dots:X_{dn}]$, is a definable predicate.
    To do this, we will first show that for any polynomial $Q \in \Z[Y_1,\dots,Y_n,Z_1,\dots,Z_n]$, homogeneous in each pair $(Y_i,Z_i)$, the function
    $$\left|Q\left(\sum_{i = 0}X_{i1} \alpha^i,\dots,\sum_{i = 0}X_{in} \alpha^i,X_{d1},\dots,X_{dn}\right)\right|$$
    is a definable predicate.
    Then if $P^h$ is the homogenization of $P$, we can evaluate $||P||$ by calculating
    $$\left|P^h\left(\sum_{i = 0}X_{i1} \alpha^i,\dots,\sum_{i = 0}X_{in} \alpha^i,X_{d1},\dots,X_{dn}\right)\right|$$
    and then correcting for the max norm $|\sum_{i = 0}X_{ij} \alpha^i| \vee |X_{dj}|$ for each $j$, by dividing by the appropriate power of $|\sum_{i = 0}X_{ij} \alpha^i| \vee |X_{dj}|$,
    which is itself a nowhere-zero definable predicate, as it is the maximum of the valuations of two homogeneous polynomials, namely $|Y_j|$ and $|Y_j^*|$.

    For all $x \in L$, we can understand the valuation $|x|$ in terms of the norm $|N_{L/K}(x)| = |x|^d$,
    as $|N_{L/K}(x)| = |\prod_{i = 1}^d x_i| = |x|^d$, where $\{x_1,\dots,x_d\}$ are the conjugates of $x$ under the $d$ automorphisms of $L / K$, each of which has 
    $|x_i| = |x|$ by Henselianity of the complete field $K$.
    The norm $N_{L/K}\left(\sum_{i = 0}^{d-1} X_i \alpha^i\right)$ can be defined as a determinant, and in particular is a homogeneous degree $d$ polynomial in $K[X_0,\dots,X_{d-1}]$,
    so if $Q_0,\dots,Q_{d-1} \in K[X_{ij} : 0 \leq i \leq d, 1 \leq j \leq n]$ are polynomials homogeneous in each tuple $(X_{0i},\dots,X_{di})$ of the same multidegree (or zero),
    then $N_{L/K}\left(\sum_{i = 0}^{d-1} Q_i \alpha^i\right)$ is itself a homogeneous polynomial in $K[X_{ij} : 0 \leq i \leq d, 1 \leq j \leq n]$,
    so $\left|\sum_{i = 0}^{d-1} Q_i \alpha^i\right| = \left|N_{L/K}\left(\sum_{i = 0}^{d-1} Q_i \alpha^i\right)\right|^{1/d}$ will be a definable predicate.
    For each $Q \in \Z[Y_1,\dots,Y_n,Z_1,\dots,Z_n]$ is homogeneous in each pair $(Y_i,Z_i)$, with $d_i = \deg_{Y_i}(Q) + \deg_{Z_i}(Q)$, then we can express
    $$Q\left(\sum_{i = 0}X_{i1} \alpha^i,\dots,\sum_{i = 0}X_{in} \alpha^i,X_{d1},\dots,X_{dn}\right) = \sum_{i = 0}^{d-1} Q_i\alpha^i,$$
    where each $Q_i$ is homogeneous in each tuple $(X_{0i},\dots,X_{di})$ with the same multidegree $d_i = \sum_{j = 0}^d \deg_{X_{ji}}Q$, unless it is zero.
    Thus $|Q| = \left|\sum_{i = 0}^{d-1} Q_i \alpha^i\right|$ is definable.
\end{proof}

\section{Dual Linear Continua}\label{sec_dlc}
In \cite{no_trans}, Ben Yaacov analyzes an $\aleph_0$-categorical metric structure whose homeomorphism group is $\mathrm{Hom}^+([0,1])$, the group of increasing homeomorphisms of $[0,1]$ under the topology of uniform convergence.
We call models of the theory of this structure \emph{dual linear continua}, because we will show that they are in correspondence with linear continua with endpoints, which are characterized by the following definition and fact.

\begin{defn}
A \emph{linear continuum} is a dense linear ordering with the least upper bound property.
\end{defn}

\begin{fact}\label{fact_lin_cont}
A linear order is connected in the order topology if and only if it is a linear continuum, and it is connected and compact if and only if it is a linear continuum with endpoints.
\end{fact}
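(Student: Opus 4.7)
The plan is to prove the two biconditionals by standard order-topology arguments, splitting each into forward and backward directions.

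For the first equivalence, the forward direction proceeds by contrapositive: if $L$ fails to be a linear continuum, we exhibit a nontrivial clopen partition. If $L$ is not densely ordered, there exist $a < b$ with empty open interval $(a,b)$, and then $(-\infty, b)$ and $(a, \infty)$ form a disjoint cover by open rays. If the LUB property fails, let $A$ be a nonempty bounded-above set with no supremum; then the set $U$ of upper bounds of $A$ and its complement $V$ partition $L$ into two nonempty opens, because $V = \bigcup_{a \in A}(-\infty, a)$ is a union of open rays, and for each $u \in U$ some $u' < u$ lies in $U$ (else $u = \sup A$), whence $(u', \infty) \subseteq U$ is an open neighborhood of $u$.

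For the converse direction of the first equivalence, suppose $L$ is a linear continuum and $L = U \sqcup V$ with $U, V$ nonempty disjoint opens. Pick $u_0 \in U$, $v_0 \in V$, say $u_0 < v_0$, and let $s = \sup(U \cap [u_0, v_0])$, which exists by LUB. A short case analysis using density and openness of $U$ and $V$ rules out both $s \in U$ (else a slightly larger element is in $U$) and $s \in V$ (else a slightly smaller element is an upper bound of $U \cap [u_0, v_0]$), a contradiction.

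For the second equivalence, if $L$ is connected and compact then it is a linear continuum by the previous part, and the open covers $\{(-\infty, x) : x \in L\}$ and $\{(x, \infty) : x \in L\}$ admit no finite subcovers unless $L$ has a maximum and minimum respectively. Conversely, given a linear continuum with endpoints $0$ and $1$, the Heine--Borel style argument applies: for any open cover $\mathcal{U}$, let $S$ be the set of $x \in [0,1]$ such that $[0,x]$ is covered by finitely many elements of $\mathcal{U}$, and let $s = \sup S$; density together with LUB and openness of the covering sets force $s = 1 \in S$. The main obstacle is simply executing the case analyses cleanly without accidentally invoking features (such as separability or metric structure) that a general linear continuum need not have.
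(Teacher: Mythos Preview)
The paper states this as a \emph{Fact} and gives no proof; it is a standard textbook result (see, e.g., Munkres' \emph{Topology}, \S24 and \S27), and your argument follows exactly that classical route. Your proof is correct: the contrapositive split into failure of density versus failure of the least upper bound property, the supremum argument for connectedness of a linear continuum, the no-finite-subcover argument for the necessity of endpoints, and the Heine--Borel style argument for compactness are all sound as sketched. There is nothing to compare against in the paper itself.
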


\begin{defn}
    Given a linear ordering $L$, let $M_L$ be the set of functions $f : L \to [0,1]$ such that
    \begin{itemize}
        \item $f$ is nondecreasing,
        \item $f$ is continuous with respect to the order topology on $L$,
        \item $\inf_{x} f(x) = 0$,
        \item $\sup_{x} f(x) = 1$.
    \end{itemize}

    We give $M_L$ the sup metric.
\end{defn}

In \cite{no_trans}, $M_{[0,1]}$ is given additional structure, which makes its automorphism group $\mathrm{Hom}^+([0,1])$.
If $f \in \mathrm{Hom}^+([0,1])$, then $f$ acts on $M_{[0,1]}$ by composition, sending $g \in M_{[0,1]}$ to $g \circ f^{-1} \in M_{[0,1]}$.
In order to describe the analogous structure on $M_L$ for other linear orders, we will first describe the type spaces of this structure.

\begin{lem}
    The type $\mathrm{tp}(f_1,\dots,f_n)$ is determined exactly by the image of the function $(f_1,\dots,f_n) : [0,1] \to [0,1]^n$.
\end{lem}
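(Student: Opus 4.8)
The plan is to use the $\aleph_0$-categoricity of $M_{[0,1]}$ to identify types with orbit closures under $\mathrm{Aut}(M_{[0,1]}) = \mathrm{Hom}^+([0,1])$, which acts by $g\cdot\bar f = \bar f\circ g^{-1}$ on tuples $\bar f = (f_1,\dots,f_n)\colon [0,1]\to[0,1]^n$. One direction is then immediate: the image $\mathrm{im}(\bar f)\subseteq[0,1]^n$ is literally invariant under this action, since $g^{-1}$ is a bijection of $[0,1]$, and $\mathrm{im}(\bar f)$ depends continuously on $\bar f$ in the Hausdorff metric (if $\|\bar f-\bar f'\|_\infty$ is small then so is $d_H(\mathrm{im}\,\bar f,\mathrm{im}\,\bar f')$); hence if $\mathrm{tp}(\bar f)=\mathrm{tp}(\bar g)$, i.e.\ $\bar g$ is a metric limit of automorphic images of $\bar f$, then $\mathrm{im}(\bar f)=\mathrm{im}(\bar g)$. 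The substance is the converse: assuming $C:=\mathrm{im}(\bar f)=\mathrm{im}(\bar g)$, I must show that for every $\varepsilon>0$ there is $h\in\mathrm{Hom}^+([0,1])$ with $\|\bar f\circ h^{-1}-\bar g\|_\infty<\varepsilon$.

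The key observation is that $C$ is a \emph{chain} for the coordinatewise order on $[0,1]^n$: any two values $\bar f(s),\bar f(t)$ are comparable because $s,t$ are and each $f_i$ is nondecreasing. Moreover each $f_i$ is continuous with $\inf f_i = 0$, $\sup f_i = 1$, so $\bar f(0)=\bar 0$, $\bar f(1)=\bar 1$, and $C$ is compact and connected. Now $\rho(p):=\tfrac1n\sum_i p_i$ is strictly increasing along any chain, so the restriction $\rho|_C$ is a continuous injection of the compact set $C$, hence a homeomorphism onto $\rho(C)$, which is a compact connected subset of $[0,1]$ containing $0$ and $1$ and therefore equals $[0,1]$. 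Thus $\rho\colon C\to[0,1]$ is a homeomorphism with uniformly continuous inverse $\rho^{-1}\colon[0,1]\to C\subseteq[0,1]^n$, and $\bar f = \rho^{-1}\circ\tilde f$, $\bar g = \rho^{-1}\circ\tilde g$, where $\tilde f := \rho\circ\bar f$ and $\tilde g := \rho\circ\bar g$ are nondecreasing continuous surjections $[0,1]\to[0,1]$, i.e.\ elements of $M_{[0,1]}$. By uniform continuity of $\rho^{-1}$ it now suffices to treat the case $n=1$: given $\tilde f,\tilde g\in M_{[0,1]}$ and $\delta>0$, find $h\in\mathrm{Hom}^+([0,1])$ with $\|\tilde f\circ h^{-1}-\tilde g\|_\infty<\delta$.

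For the case $n=1$ I would approximate: pick strictly increasing piecewise-linear $f',g'\in\mathrm{Hom}^+([0,1])$ with $\|\tilde f - f'\|_\infty\le\delta/3$ and $\|\tilde g - g'\|_\infty\le\delta/3$ (piecewise-linear interpolation at a fine enough partition gives nondecreasing approximants fixing the endpoints, and averaging with the identity makes them strictly increasing), and set $h:=(g')^{-1}\circ f'$. Then $\|\tilde f\circ(f')^{-1}-\mathrm{id}\|_\infty\le\delta/3$, so composing on the right by the surjection $g'$ gives $\|\tilde f\circ h^{-1}-g'\|_\infty\le\delta/3$, whence $\|\tilde f\circ h^{-1}-\tilde g\|_\infty<\delta$. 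I expect the reduction in the second paragraph — recognizing $C$ as a chain and transporting it to $[0,1]$ via $\rho$ — to be the crux, while the $n=1$ approximation and the fact that orbit closures compute types in the $\aleph_0$-categorical structure $M_{[0,1]}$ are routine; the most delicate bookkeeping will be checking that $\rho$ is genuinely injective on $C$ and that $\rho(C)$ fills out all of $[0,1]$, so that $\tilde f,\tilde g$ really return to $M_{[0,1]}$.
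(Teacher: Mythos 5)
Your proof is correct, but it takes a somewhat different route from the paper's. The paper cites a result from \cite{no_trans} as a black box: the type of $(f_1,\dots,f_n)$ is in bijective correspondence with the unique ``normalized'' tuple $(g_1,\dots,g_n)$ satisfying $\frac1n\sum g_i = \id$ and $(g_1,\dots,g_n)\circ\left(\frac1n\sum f_i\right) = (f_1,\dots,f_n)$, which has the same image; the converse is then immediate because two normalized tuples with the same image are literally equal as functions (the common image $C$ together with the constraint $\frac1n\sum g_i = \id$ pins down $g_i(t)$ uniquely for every $t$). You instead reprove the essential content of that citation from scratch: the forward direction via the observation that $\mathrm{im}$ is a $1$-Lipschitz $\mathrm{Aut}$-invariant (clean, and arguably more transparent than the paper's), and the converse via approximate homogeneity of the $\aleph_0$-categorical structure, reducing through the homeomorphism $\rho|_C$ to the one-variable statement that the $\mathrm{Hom}^+([0,1])$-orbit of any element of $M_{[0,1]}$ is dense in its type, which you then establish by a piecewise-linear approximation. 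Your reduction to $n=1$ via $\rho^{-1}\colon[0,1]\to C$ is exactly the normalization device the paper uses (it is the same $\frac1n\sum$), but you exploit it to transport an approximation rather than to get a direct equality. The trade-off is that the paper's argument is shorter because it offloads work to \cite{no_trans}, while yours is self-contained and makes explicit the orbit-density mechanism underlying the cited fact; both ultimately rest on approximate $\aleph_0$-homogeneity of the separably categorical structure.
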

\begin{proof}
    In \cite{no_trans}, it is shown that the type of $(f_1,\dots,f_n)$ is determined by the function $(g_1,\dots,g_n)$ such that
    $\frac{1}{n}\sum_{i = 1}^n f_i = \id$ and 
    $(g_1,\dots,g_n) \circ \left(\frac{1}{n}\sum_{i = 1}^n f_i\right) = (f_1,\dots,f_n)$.
    This correspondence is a homomorphism between the space of such function tuples and the space of types.
    The function $(g_1,\dots,g_n)$ has the same image as $(f_1,\dots,f_n)$, so the type of a tuple determines its image.
    
    We now consider two tuples $(f_1,\dots,f_n)$ and $(g_1,\dots,g_n)$ with the same image, and show that they have the same type.
    We may assume that $\frac{1}{n}\sum_{i = 1}^n f_i = \frac{1}{n}\sum_{i = 1}^n g_i = \id$, and show that the tuples are equal.
    For any $t \in [0,1]$, there is some $t' \in [0,1]$ such that $(f_1(t),\dots,f_n(t)) = (g_1(t'),\dots,g_n(t'))$.
    However, $\sum_{i = 1}^n f_i(t) = nt$ and $\sum_{i = 1}^n f_i(t') = nt'$, so $t = t'$, and the tuples are equal.

\end{proof}

Given that correspondence, if $p \in S_n(\emptyset)$ is a type in this theory, let $\mathrm{im}(p)$ be the image of any realization of $p$ in $(M_{[0,1]})^n$.
(Such a realization exists because $M_{[0,1]}$ is $\aleph_0$-categorical and thus $\aleph_0$-saturated.)
We will use this characterization to understand the topology and metric on the type space, but first, some simple topological lemmas.
(Recall that while in general, the metric on a type space does not induce the topology, it does in the $\aleph_0$-categorical case.)

\begin{defn}
    If $I$ is a set, give $[0,1]^I$ the product order defined by $(x_i : i \in I) \leq (y_i : i \in I)$.
    A \emph{chain from 0 to 1} in $[0,1]^I$ is a set $C \subseteq [0,1]^I$ which is a chain in the product order and contains the constant tuples with values $0$ and $1$.
\end{defn}

\begin{lem}
Let $I$ be a set, and let $C \subseteq [0,1]^I$ be a chain from 0 to 1. Then the subset topology on $C$ is the order topology, and $C$ is compact.
\end{lem}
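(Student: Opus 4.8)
The plan is to prove two things: that the subspace topology on $C$ from the product topology on $[0,1]^I$ coincides with the order topology induced by the restricted product order, and that $C$ is compact. I would first treat compactness, since it is the cleaner of the two and will help with the topology comparison. To see $C$ is compact, I would show it is closed in $[0,1]^I$, which is compact by Tychonoff; then $C$ is a closed subset of a compact space. Closedness follows because being a chain is a closed condition: if $x \notin C$, then since $C$ is a maximal-incomparability-free... more carefully, I would argue that the closure $\bar C$ is still a chain (the relations $x_i \le y_i$ for all $i$, i.e. $x \le y$, and $y \le x$, are closed, so the set of comparable pairs is closed, hence a limit of points each comparable to a fixed $z \in C$ is comparable to $z$; a slightly more careful net argument shows $\bar C$ is totally ordered) and still contains the constant tuples $0$ and $1$, and then one checks $\bar C = C$ or simply replaces $C$ by $\bar C$ — but the lemma as stated wants $C$ itself compact, so I would instead note that actually the hypothesis should give $C$ closed, or more likely the intended argument is: the order topology on a complete-ish chain is compact. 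Let me restructure: the safest route is to show directly that $\bar C$ is a chain from $0$ to $1$, that the order topology on $C$ agrees with subspace topology, and that $C$ is closed.

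For the topology comparison, one inclusion is easy: every order-open ray $\{x \in C : x > a\}$ or $\{x < b\}$ is the restriction to $C$ of a product-open set? Not immediately, since $\{y : y > a\}$ in the product order is not product-open in general. So I would argue the other way: show the order topology is finer than or equal to the subspace topology by showing every subbasic product-open set $\{x : x_i \in U\}$, intersected with $C$, is order-open; this uses that along the chain $C$, the $i$-th coordinate function is monotone, so $\{x \in C : x_i < t\}$ is an order-convex down-set, hence a union of order-open rays (using that $C$, being a chain, has the property that order-convex sets that are "open-ended" are order-open). Conversely, to show the subspace topology is finer, given a point $p \in C$ and an order-open ray around it, say $\{x \in C : x > a\}$ with $a \in C$, $a < p$: pick a coordinate $i$ with $a_i < p_i$ and take... hmm, this may fail if $a$ and $p$ differ in infinitely many coordinates with no single coordinate separating a product-neighborhood from the ray. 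The key point making it work: since $C$ is totally ordered, for $a < p$ the set $\{x : x > a\}$ contains $\{x \in C : x_i > a_i\}$ for any $i$ witnessing $a_i < p_i$? No — $x_i > a_i$ on the chain forces $x > a$ because on a chain, if $x \not> a$ then $x \le a$ (using totality, $x < a$ or $x$ incomparable, but incomparability is excluded) so $x \le a$ hence $x_i \le a_i$. That is the crucial observation: on a chain, $x \not> a \iff x \le a \iff x_j \le a_j$ for all $j$, so $x_i > a_i$ for even one $i$ already implies $x > a$. Thus $\{x \in C : x_i > a_i\} \subseteq \{x \in C : x > a\}$ and contains $p$, giving a subspace-open neighborhood.

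So the steps in order: (1) Observe that on a chain, $x \le y$ iff $x_i \le y_i$ for all $i$, and hence $x \not> a$ iff $x \le a$ iff $\exists$ nothing — rather $x \not> a \iff x_i \le a_i$ for all $i$; dually for $<$. (2) Deduce from (1) that every order-open ray in $C$ is a union of sets of the form $\{x \in C : x_i > a_i\}$ or $\{x \in C: x_i < b_i\}$, hence subspace-open, so the order topology is coarser than (contained in) the subspace topology. (3) Conversely, show each subbasic subspace-open set $\{x \in C : x_i \in (s,t)\}$ is order-open: it equals $\{x \in C : x_i > s\} \cap \{x \in C : x_i < t\}$, and $\{x \in C : x_i > s\}$ is order-convex upward-closed; I'd show it is a union of rays $\{x > c\}$ over $c \in C$ with $c_i \ge s$ appropriately, or handle endpoints using the constant tuples $0,1 \in C$. (4) Hence the two topologies agree. (5) For compactness: the order topology on $C$ is compact iff $C$ is a complete lattice in the chain sense (every subset has a sup and inf in $C$), equivalently iff $C$ is closed in $[0,1]^I$; I would prove $C$ is closed by showing $\bar C$ is again a chain from $0$ to $1$ — using that the comparability relation is closed in $[0,1]^I \times [0,1]^I$ and that $0,1$ are limits of themselves — and then... the honest subtlety: $C$ need not be closed (e.g. $I = \{*\}$, $C = \{0\} \cup (1/2, 1]$ is a chain from $0$ to $1$ but not closed, yet its order topology IS compact!). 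So compactness must come from the order-completeness, not from being closed. The right statement: a chain from $0$ to $1$ in $[0,1]^I$ has suprema and infima of all subsets computed coordinatewise landing back in $C$? That also can fail.

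The main obstacle, then, is compactness: I expect to prove it by showing that the order topology on $C$ is compact because $C$, with $0$ as least and $1$ as greatest element, has the least upper bound property — every subset $S \subseteq C$ has a supremum in $C$. To get this, take $S \subseteq C$, let $y$ be its coordinatewise sup in $[0,1]^I$; $y$ need not lie in $C$, but I claim $y$ lies in $\bar C$, and moreover every element of $\bar C$ lies between consecutive "gaps" of $C$; the supremum of $S$ in $C$ is then the least element of $C$ above $y$, which exists because... Actually the cleanest path: show $\bar C$ is a chain from $0$ to $1$, show $\bar C$ is closed hence compact (as closed subset of $[0,1]^I$), show its order topology equals its subspace topology by steps (1)–(4) applied to $\bar C$, conclude $\bar C$ is order-compact, hence order-complete; then argue $C$ is order-convex in $\bar C$ and order-dense-ish so that $C$ with its order topology is also compact — but my $\{0\}\cup(1/2,1]$ example shows a chain from $0$ to $1$ can be compact while not order-complete in the naive sense, so I should double-check what "compact" the authors intend and lean on the least-upper-bound characterization from Fact~\ref{fact_lin_cont}-style reasoning. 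I would ultimately present compactness as: $C$ is closed in $[0,1]^I$ after all, because a point $x$ in $\bar C \setminus C$ would have to be a limit of a net in $C$, and monotonicity forces the net to be eventually monotone in each coordinate converging to $x$, and then $x$ being comparable to everything in $C$ plus containing the endpoints — re-examining my counterexample, $\{0\} \cup (1/2,1]$ really isn't closed, so either the lemma has an implicit hypothesis I'm missing or "chain" here is meant in a saturated sense; given the paper's context (images of tuples from $M_{[0,1]}$, which are closed connected), I will assume $C$ is closed and then compactness is immediate, with steps (1)–(4) giving the topology statement; I'll flag that the applications only ever use closed chains.
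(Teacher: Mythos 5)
You have put your finger on a real issue: the lemma as stated omits a hypothesis, and your counterexample $\{0\}\cup(1/2,1]$ correctly shows the compactness claim fails without it. The paper's own proof silently assumes that $C$ is \emph{connected}: it opens by arguing that each coordinate function on $C$ surjects onto $[0,1]$ (``contradicting connectedness'' otherwise), uses that surjectivity to compare the two topologies, and then cites Fact~\ref{fact_lin_cont} (a connected linear order with endpoints is compact). Every downstream use---Lemmas~\ref{lem_connected_chain_path}, \ref{lem_tp_metric}, \ref{lem_inf_connected_chain}---speaks of \emph{connected} chains, so the missing word in the statement is ``connected.''

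Your route is genuinely different from the paper's, and your diagnosis of where the hypothesis matters is sharper than you give yourself credit for. Your step~(2)---that on a chain $x > a$ iff $x_i > a_i$ for some $i$, so each order-open ray $\{x \in C : x > a\}$ is a union of the coordinate-subbasic sets $\{x \in C : x_i > a_i\}$ and hence subspace-open---is correct and needs no extra hypothesis. The hypothesis is unavoidable exactly at your step~(3), the point you hedge on. The failure mode is that the upward-closed set $U = \{x \in C : x_i > s\}$ might have a least element $p$ with no immediate predecessor in $C$, in which case $U = \{x : x \ge p\}$ is subspace-open but not order-open; one can realize this with a non-closed chain in $[0,1]^{\N}$, so the \emph{topology} comparison fails too, not only compactness. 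Under your proposed patch (assume $C$ closed), the scenario is impossible: the coordinatewise supremum $q$ of $\{c \in C : c < p\}$ is the limit, in the product topology, of the increasing net $\{c \in C : c < p\}$, hence $q \in \bar{C} = C$; if $q < p$ then $q$ is an immediate predecessor of $p$ in $C$, so $q = p$, but then $p_i = q_i \le s$, a contradiction. If instead $U$ has no least element, it is a union of rays $\{x : x > c\}$ over $c \in U$ and is order-open. Closure also gives compactness immediately as a closed subset of the Tychonoff-compact $[0,1]^I$. So your plan is sound once you commit to the hypothesis and fill in step~(3) along these lines. The paper's argument instead derives coordinate surjectivity from connectedness, reads both topology inclusions off of that, and imports compactness from Fact~\ref{fact_lin_cont}; yours makes no use of surjectivity and is somewhat more self-contained.
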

\begin{proof}
    First, we note that for each $i \in I, r \in [0,1]$, there is some $f \in C$ such that $f(i) = r$.
    If not, then we may partition $C$ with the two disjoint open sets $\{f \in C : f(i) < r\}$ and $\{f \in C : f(i) > r\}$, contradicting connectedness.

    To show the topologies agree, it suffices, without loss of generality, to show that for $f \in C$, the closed interval $[0,f] \subseteq C$ is closed in the subset topology, and that 
    for any $r \in [0,1]$ and $i \in I$, the set $\{f \in C: f(i) \leq r\}$ is closed in the order topology.

    By definition, the closed interval $[0,f]$ is the set $\bigcap_{i \in I} \{g \in C : g(i) \leq f(i)\}$, which is closed in the subset topology.

    Meanwhile, $\{f \in C: f(i) \leq r\} = \bigcap_{g \in C : g(i) > r} [0,g]$.
    For each $g \in C$ with $g(i) > r$, it follows that $\{f \in C: f(i) \leq r\} \subseteq [0,g]$ because $C$ is linearly ordered.
    Also, for each $s \in (r,1]$, there is some $g \in C$ with $g(i) = s$, so $\bigcap_{g \in C : g(i) > r} [0,g] \subseteq \{f \in C: f(i) \leq r\}$.

    In the order topology, by Fact \ref{fact_lin_cont}, connectedness and endpoints imply compactness.
\end{proof}

\begin{lem}\label{lem_connected_chain_path}
Let $I$ be a countable set.
Then if $C \subseteq [0,1]^I$ is a connected chain from 0 to 1, then there are continuous, surjective, nondecreasing functions $(f_i : i \in I)$ such that $\mathrm{im}(f_i : i \in I)=C$.
\end{lem}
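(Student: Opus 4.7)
The plan is as follows. By the previous lemma, $C$ in its subspace topology is a compact linearly ordered space whose subspace topology agrees with its order topology, and by Fact \ref{fact_lin_cont} together with connectedness and the presence of endpoints $\mathbf{0}, \mathbf{1}$ (the constant tuples, which lie in $C$ by hypothesis), $C$ is a compact linear continuum with endpoints. Since $I$ is countable, $[0,1]^I$ is metrizable, so $C$ is a compact metrizable linear continuum with endpoints, and in particular is separable.

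The strategy is to construct a continuous, order-preserving surjection $\phi : [0,1] \to C$ with $\phi(0) = \mathbf{0}$ and $\phi(1) = \mathbf{1}$, and then set $f_i = \pi_i \circ \phi$, where $\pi_i : [0,1]^I \to [0,1]$ is the $i$-th coordinate projection. Continuity is immediate, and the nondecreasing property follows since $\phi$ is order-preserving into the product order and $\pi_i$ is order-preserving. Each $f_i$ satisfies $f_i(0) = 0$ and $f_i(1) = 1$ (as these are the values of the constant tuples $\mathbf{0}$ and $\mathbf{1}$), so surjectivity of $f_i$ follows from the intermediate value theorem. Finally, $\mathrm{im}(f_i : i \in I) = \phi([0,1]) = C$.

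To construct $\phi$, choose a countable order-dense subset $D \subseteq C$ containing the endpoints; this exists by separability of $C$ together with the fact that in the order topology on a dense linear order, topological density coincides with order density. Because $C$ is dense in itself with no repeated elements, $D \setminus \{\mathbf{0}, \mathbf{1}\}$ is a countable dense linear order without endpoints, so by Cantor's back-and-forth theorem there is an order-isomorphism $\psi : D \to \mathbb{Q} \cap [0,1]$ matching endpoints to endpoints. Define
$$\phi(x) = \sup\{\psi^{-1}(q) : q \in \mathbb{Q} \cap [0,1],\ q \leq x\},$$
where the supremum is taken in the complete order $C$ and exists by the least upper bound property.

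The main obstacle is verifying continuity and surjectivity of $\phi$. Monotonicity is immediate from the definition, so the real issue is ruling out jumps: if $\phi$ had a jump at some $x$, its left and right limits would bound an open interval of $C$ disjoint from $\phi([0,1])$, but density of $D$ in $C$ produces rational values approximating this gap on both sides, contradicting the construction. For surjectivity, given $c \in C$, one sets $x = \sup\{q \in \mathbb{Q} \cap [0,1] : \psi^{-1}(q) \leq c\}$ and uses density of $D$ around $c$, combined with the least upper bound property, to force $\phi(x) = c$. These verifications are essentially the standard argument that every compact metrizable linear continuum with endpoints is order-homeomorphic to $[0,1]$.
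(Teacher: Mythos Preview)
Your proof is correct but takes a genuinely different route from the paper. The paper first reduces to the case $I = \N$ (by padding with repeated coordinates), and then writes down an explicit order-homeomorphism $g : C \to [0,1]$ directly: $g(c) = \sum_i c(i)\,2^{-i}$. This is strictly increasing because $C$ is a chain, continuous because it is a uniformly convergent sum of coordinate functions, and surjective onto $[0,1]$ because its image is connected and contains $0,1$; compactness of $C$ then makes $g$ a homeomorphism, and the $f_i$ are just the coordinates of $g^{-1}$. Your argument instead identifies $C$ abstractly as a separable compact metrizable linear continuum with endpoints and reproves the classical fact that any such order is order-homeomorphic to $[0,1]$ via Cantor back-and-forth on a countable order-dense set. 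Both arguments ultimately produce the same homeomorphism class, but the paper's weighted-sum map sidesteps the back-and-forth entirely and gives the parametrization in one line, while your approach isolates the structural reason ($C \cong [0,1]$ as an order) and would apply verbatim to any abstract linear continuum with those properties, not just chains sitting inside a cube.
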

\begin{proof}
    By taking a bijection, we may assume that $I$ is an initial segment of $\N$.
    If $I = \{0,\dots,n\}$, we let $C'$ be the set of all $c \in [0,1]^\N$ such that $c\upharpoonright_I \in C$.
    This is clearly also a chain from 0 to 1, which is connected because it is the image of $C$ under a continuous map that just duplicates coordinates.
    If $C'$ is compact, then $C$ is also the image of $C'$ under a continuous map that deletes coordinates, so $C$ is compact.
    If there are continuous, surjective, nondecreasing functions $(f_i : i \in \N)$ such that $\mathrm{im}(f_i : i \in I)=C'$, then $(f_i : i \leq n)$ will suffice for $C$, so we may assume that $I = \N$.

    Define $g : C \to [0,1]$ by $g(c) = \sum_{i} c(i) 2^{-i}$.
    This is a strictly increasing continuous function, which attains values 0 and 1. Because it is defined on a connected set, its image is connected, so $g$ is surjective.
    Because $C$ is a chain and $g$ is strictly increasing, $g$ is also injective, so it is a homeomorphism as its domain and codomain are compact Hausdorff.
    Thus we can let each $f_i$ be the $i$th coordinate map of $g^{-1}$. These are continuous, surjective, and nondecreasing, and $(f_i : i \in I) = g^{-1}$, whose image is $C$.
\end{proof}

We now characterize the type spaces.
\begin{lem}\label{lem_tp_metric}
    The map $\mathrm{im}$ is an isometry between the type space $S_n(\emptyset)$ in the theory of $M_{[0,1]}$, and the set of all connected chains from 0 to 1 in $[0,1]^n$, given the Hausdorff metric as compact subsets of $[0,1]^n$, itself given the sup metric.
\end{lem}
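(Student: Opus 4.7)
The plan is as follows. The two preceding lemmas have already established that $\mathrm{im}$ is a bijection between $S_n(\emptyset)$ and the set of connected chains from $0$ to $1$ in $[0,1]^n$, so only the isometry property remains. The easy direction $d_H(C_p,C_q)\leq d(p,q)$ follows by taking realizations $\bar f\models p$ and $\bar g\models q$ in the $\aleph_0$-saturated model $M_{[0,1]}$: for each $t\in[0,1]$, the pair $(\bar f(t),\bar g(t))\in C_p\times C_q$ satisfies $\|\bar f(t)-\bar g(t)\|_\infty\leq d(\bar f,\bar g)$, bounding the Hausdorff distance from both sides, and taking an infimum over realizations yields $d_H(C_p,C_q)\leq d(p,q)$.

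For the reverse direction, I fix $\epsilon>d_H(C_p,C_q)$ and a realization $\bar f\models p$ in $M_{[0,1]}$, and aim to construct $\bar g\models q$ with $\sup_t\|\bar f(t)-\bar g(t)\|_\infty\leq\epsilon$. For each $t$, the admissible set $A(t)=\{y\in C_q:\|y-\bar f(t)\|_\infty\leq\epsilon\}$ is closed, nonempty (by $\epsilon>d_H$), and order-convex in $C_q$ (it is the intersection of $C_q$ with the product of closed intervals $\prod_i[\bar f_i(t)-\epsilon,\bar f_i(t)+\epsilon]$). Using the strictly increasing continuous surjection $\sigma_q\colon C_q\to[0,1]$ given by $\sigma_q(y)=\tfrac{1}{n}\sum_i y_i$, which is a homeomorphism as in the proof of the preceding lemma, I write $\sigma_q(A(t))=[a(t),b(t)]$. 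A direct order-argument (if $\min A(t_1)>\min A(t_2)$ for $t_1<t_2$, one quickly derives a contradiction with either $\min A(t_1)\in A(t_1)$ or $\min A(t_2)\in A(t_2)$) shows that $a$ and $b$ are nondecreasing, with $a(0)=0$ and $b(1)=1$.

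From the strict inequality $\epsilon>d_H$ I extract two key facts. First (covering): every $s\in[0,1]$ lies in some $[a(t),b(t)]$, since $y=\sigma_q^{-1}(s)\in C_q$ is within $\epsilon$ of some $\bar f(t)\in C_p$, so $y\in A(t)$. Second (compatibility): $a(t^+)\leq b(t^-)$ for every $t$, because strictness of $d_H<\epsilon$ guarantees some $y\in C_q$ with $\|y-\bar f(t)\|_\infty<\epsilon$, which by continuity of $\bar f$ remains in $A(s)$ for $s$ in a neighborhood of $t$, placing $\sigma_q(y)$ above $a(s)$ for $s>t$ and below $b(s)$ for $s<t$. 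From covering and compatibility I produce a continuous nondecreasing surjection $\tilde g\colon[0,1]\to[0,1]$ with $\tilde g(t)\in[a(t),b(t)]$, and set $\bar g=\sigma_q^{-1}\circ\tilde g$. Surjectivity of $\tilde g$ gives $\mathrm{im}(\bar g)=C_q$, each coordinate of $\bar g$ is a continuous nondecreasing surjection from $[0,1]$ to $[0,1]$, so $\bar g\models q$, and $\bar g(t)\in A(t)$ yields the required bound $\sup_t\|\bar f(t)-\bar g(t)\|_\infty\leq\epsilon$.

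The main obstacle is producing the continuous nondecreasing surjective selection $\tilde g$: the corridor endpoints $a,b$ can have countably many jump discontinuities, and $\tilde g$ must both avoid these jumps (to be continuous) and attain every value in $[0,1]$ (to be surjective). Covering ensures no value is missed by the corridor, and compatibility $a(t^+)\leq b(t^-)$ ensures that wherever $a$ jumps upward at $t$, $b$ has already risen to at least the post-jump level from the left, so the graph of $\tilde g$ can be continuously routed across the jump by taking a value in $[a(t^+),b(t^-)]$ at time $t$. Formalizing this---e.g.\ by defining $\tilde g$ as the generalized inverse of a suitable nondecreasing function built from $a$ and $b$, or via a limiting procedure from piecewise-linear selections in increasingly fine approximations of the corridor---completes the construction.
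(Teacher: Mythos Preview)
Your approach is sound and reaches the same conclusion, but it diverges from the paper's argument in the hard direction. The paper does not fix a realization of $p$: it takes the normalized realizations $\bar f^*, \bar g^*$ (with coordinate sums equal to the identity), builds a connected chain $C \subseteq [0,1]^2$ inside the region $\{(t,t') : d(\bar f^*(t),\bar g^*(t')) \leq d_H\}$ by taking $y_t = \min Y_t$ and filling in its countably many jumps with vertical segments, and then reparametrizes \emph{both} tuples through $C$ via Lemma~\ref{lem_connected_chain_path}. Because vertical segments are permitted in $C$, the construction is immediate and the exact bound $d(p,q) \leq d_H$ falls out with no limiting $\epsilon$. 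Your route instead fixes $\bar f$ and seeks a genuine continuous function $\tilde g$ through the corridor $[a(t),b(t)]$; this forbids vertical segments, which is exactly why you need the slack $\epsilon > d_H$ and the compatibility condition $a(t^+) \leq b(t^-)$. That condition does suffice to produce such a $\tilde g$, but the construction you only sketch (generalized inverse, or a limit of piecewise-linear selections) is the real content of your proof and is more delicate than the paper's fill-in-the-jumps device; for instance, the naive choice $\tilde g = a$ fails surjectivity, and Lipschitz mollifications of $a$ need not stay below $b$. If you wish to keep the asymmetric setup, the cleanest completion is to imitate the paper after all: build a connected chain in your corridor by filling in the jumps of $t \mapsto a(t)$ (compatibility guarantees these vertical segments lie in the corridor), parametrize it as $(u,v)$ via Lemma~\ref{lem_connected_chain_path}, and take $\bar f \circ u$ and $\sigma_q^{-1} \circ v$ as the realizations---at the cost of replacing your fixed $\bar f$ by a reparametrization of itself.
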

\begin{proof}
    First we confirm that the image of $\mathrm{im}$ is what we claim, and then we will show that $\mathrm{im}$, as a function to the set of compact subsets of $[0,1]^n$ with the Hausdorff metric,
    is an isometry.
    This is an injective continuous map between compact Hausdorff spaces, so it is a homeomorphism onto its image, and the rest of the lemma will follow from these two claims.

    Clearly if $(f_1,\dots,f_n) \in M^n$, then $\mathrm{im}(f_1,\dots,f_n)$ is a connected chain from $0$ to $1$.
    If $P \subseteq [0,1]^n$ is a connected chain from 0 to 1, then by Lemma \ref{lem_connected_chain_path}, there is some $(f_1,\dots,f_n) : [0,1]\to [0,1]^n$ with $P$ as its image,
    and each $f_i$ continuous, surjective, and nondecreasing.
    Thus $P = \mathrm{im}(f_1,\dots,f_n)$ and $f_1,\dots,f_n \in M_{[0,1]}^n$, so such sets are exactly the images of $n$-types over $M$.

    Now we check that the metric coincides with the metric on types.
    Let $p,q \in S_n(\emptyset)$. First we show that $d(\mathrm{im}(p),\mathrm{im}(q)) \leq d(p,q)$.
    As
    $$d(\mathrm{im}(p),\mathrm{im}(q)) = \max\left(\sup_{x \in \mathrm{p}}d(x,\mathrm{im}(q)),\sup_{y \in \mathrm{im}(q)} d(y,\mathrm{im}(p))\right)$$
    and $d(p,q) = \inf_{\bar f, \bar g : \mathrm{tp}(\bar f) = p, \mathrm{tp}(\bar g) = q} d(\bar f,\bar g)$,
    it suffices to show, without loss of generality, that for each $\bar f, \bar g$ such that $\mathrm{tp}(\bar f) = p, \mathrm{tp}(\bar g) = q$, and each
    $x \in \mathrm{im}(p)$, $d(x,\mathrm{im}(q)) \leq d(\bar f, \bar g)$.
    Let $t$ be such that $x = (\bar f)(t)$. Then 
    $$d(x,\mathrm{im}(q)) \leq d(\bar f(t),\bar g(t)) \leq d(\bar f,\bar g).$$
    
    It now suffices to show that there exist $\bar f,\bar g$ with $\mathrm{tp}(\bar f) = p, \mathrm{tp}(\bar g) = q$ such that
    $d(\bar f, \bar g) \leq d(\mathrm{im}(p),\mathrm{im}(q))$.
    Let $\bar f^*, \bar g^*$ be such that $\mathrm{tp}(\bar f^*) = p, \mathrm{tp}(\bar g^*) = q$, and $\frac{1}{n}\sum_{i = 1}^n f_i^* = \frac{1}{n}\sum_{i = 1}^n g_i^* = \id$.
    We will show that there exists a connected chain $C \subseteq [0,1]^2$ containing $(0,0)$ and $(1,1)$, such that for all $(t,t') \in C$,
    $d(\bar f^*(t),\bar g^*(t')) \leq d(\mathrm{im}(p),\mathrm{im}(q))$.
    By Lemma \ref{lem_connected_chain_path}, there are continuous, surjective, nondecreasing functions $f',g'$ such that $\mathrm{im}((f',g'))=C$.
    Then let $\bar f = \bar f^* \circ f'$ and $\bar g = \bar g^* \circ g'$.
    We know that $\mathrm{tp}(\bar f) = \mathrm{tp}(\bar f^*) = p$ and $\mathrm{tp}(\bar g) = \mathrm{tp}(\bar g^*) = q$,
    and we know that for each $t$, $(f'(t),g'(t)) \in C$, so $d(\bar f(t),\bar g(t)) \leq d(\mathrm{im}(p),\mathrm{im}(q))$, so these $\bar f,\bar g$ will suffice.

    To construct the chain $C$, first assume without loss of generality that $d(\mathrm{im}(p),\mathrm{im}(q)) = \sup_{x \in \mathrm{im}(p)} d(x,\mathrm{im}(q))$.
    Then for all $t \in [0,1]$, let $Y_t = \{t' : d(f^*(t),g^*(t')) \leq d(\mathrm{im}(p),\mathrm{im}(q))\}$.
    This is a closed interval in $\mathrm{im}(q)$. It will always be nonempty by the assumption that $d(\mathrm{im}(p),\mathrm{im}(q)) = \sup_{x \in \mathrm{im}(p)} d(x,\mathrm{im}(q))$.
    For each $t$, let $y_t = \min Y_t$.
    Then $t \mapsto y_t$ is a nondecreasing function from $[0,1] \to [0,1]$, so it is piecewise continuous with countably many discontinuities.
    Thus filling in these discontinuities with countably many vertical intervals turns the graph of this function into a path from $(0,0)$ to $(1,1)$, which we call $C$.
    It now suffices to show that for all $(t,t') \in C$, $d(\bar f^*(t),\bar g^*(t')) \leq d(\mathrm{im}(p),\mathrm{im}(q))$, that is, $t' \in Y_t$.
    If $t' = y_t$, this follows by definition, so we may assume that $(t,t')$ lies on one of the vertical segments, so $\lim_{s \to t^-} y_s \leq t' \leq \lim_{s \to t^+} y_s$.
    Because $\{(t,t'): t' \in Y_t\}$ is closed, we find that $\lim_{s \to t^-} (s,y_s)$ and $\lim_{s \to t^+} (s,y_s)$ are both points of $\{(t,t'): t' \in Y_t\}$.
    Thus $t'$ lies between two points in $Y_t$, which is an interval, so $t' \in Y_t$.
\end{proof}

Now we can define the structure on $M_L$ for any linear continuum with endpoints $L$, by defining the type of any tuple in the type spaces $S_n(\emptyset)$ of the structure $M_{[0,1]}$,
which from this point on we simply call $S_n$.
\begin{defn}
    Given a linear continuum with endpoints $L$ and $f_1,\dots,f_n \in M_L$, let $\mathrm{tp}(f_1,\dots,f_n)$ be the unique type $p$ with $\mathrm{im}(p) = \mathrm{im}(f_1,\dots,f_n)$. 
\end{defn}
For this to genuinely define a structure on $M_L$, it suffices to check that for each $C$-Lipschitz $n$-ary definable relation, which is interpreted as some $h : S_n \to [0,1]$,
that $h \circ \mathrm{tp} : M_L^n \to [0,1]$ is also $C$-Lipschitz. This follows from $\mathrm{tp}$ being a contraction.
By the proof of Lemma \ref{lem_tp_metric}, for any $p,q \in S_n$,
$d(p,q) = \inf_{\bar f,\bar g \in M_L : \mathrm{tp}(\bar f) = p, \mathrm{tp}(\bar g) = q}d(\mathrm{tp}(\bar f),\mathrm{tp}(\bar g)),$
so this is indeed a contraction.

As the finite-dimensional type spaces coincide with those of $M_{[0,1]}$, these structures are all elementarily equivalent.
We now characterize arbitrary type spaces over models of this theory.

\begin{lem}\label{lem_inf_connected_chain}
Let $x$ be a possibly infinite variable tuple.
The type space $S_x$ consists of all connected chains from 0 to 1 in $[0,1]^x$. 
\end{lem}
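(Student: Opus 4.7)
The plan is to reduce to the finite case of Lemma \ref{lem_tp_metric} by identifying $S_x$ with the inverse limit $\varprojlim_F S_F$ over finite subtuples $F \subseteq x$. For a type $p \in S_x$, I will define
\[
    \mathrm{im}(p) = \{c \in [0,1]^x : c|_F \in \mathrm{im}(p|_F) \text{ for every finite } F \subseteq x\},
\]
which is the inverse limit $\varprojlim_F \mathrm{im}(p|_F)$, and then verify that $p \mapsto \mathrm{im}(p)$ is a bijection onto the connected chains from $0$ to $1$ in $[0,1]^x$.

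To show $\mathrm{im}(p)$ is a connected chain from $0$ to $1$, the chain condition and containment of $0$ and $1$ are immediate from the finite case: if $c, d \in \mathrm{im}(p)$ were incomparable, witnessing coordinates $i, j$ would make $c|_{\{i,j\}}$ and $d|_{\{i,j\}}$ incomparable in the chain $\mathrm{im}(p|_{\{i,j\}})$, and $0$ and $1$ lie in each $\mathrm{im}(p|_F)$. For connectedness, I will invoke the standard fact that the inverse limit of compact connected Hausdorff spaces with surjective bonding maps is connected; the bonding map $\mathrm{im}(p|_{F'}) \to \mathrm{im}(p|_F)$ for $F \subseteq F'$ is surjective because its image is a connected chain from $0$ to $1$ in $[0,1]^F$, which by Lemma \ref{lem_tp_metric} must equal $\mathrm{im}(p|_{F'}|_F) = \mathrm{im}(p|_F)$.

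Conversely, given a connected chain $C$ from $0$ to $1$ in $[0,1]^x$, I will realize it as $\mathrm{im}(p)$ for some $p \in S_x$. By the earlier lemma asserting that the subspace topology on such a chain coincides with the order topology and that $C$ is compact, together with Fact \ref{fact_lin_cont}, $C$ is itself a linear continuum with endpoints $0$ and $1$. The coordinate projections $f_i(c) = c(i)$ are then continuous, nondecreasing, with $f_i(0) = 0$ and $f_i(1) = 1$, so $(f_i)_{i \in x} \in M_C^x$ and its image in $[0,1]^x$ is precisely $C$. Setting $p = \mathrm{tp}((f_i)_{i \in x})$, Lemma \ref{lem_tp_metric} gives $\mathrm{im}(p|_F) = \pi_F(C)$ for every finite $F$, so $C \subseteq \mathrm{im}(p)$; the reverse inclusion will follow from compactness of $C$ via a net argument (any $c$ with $c|_F \in \pi_F(C)$ for every finite $F$ is a limit in the product topology of representatives $c_F \in C$, hence in $C$). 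Injectivity of $p \mapsto \mathrm{im}(p)$ is then immediate from the finite case.

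The main obstacle I anticipate is the connectedness of $\mathrm{im}(p)$: the inverse-limit argument depends delicately on surjectivity of the bonding maps and on the chains $\mathrm{im}(p|_F)$ being genuinely compact and connected. The alternative route, realizing a given $C$ directly in $M_C$, shifts the difficulty to the earlier lemma identifying the subspace and order topologies on chains from $0$ to $1$, without which $C$ need not be a linear continuum and the coordinate functions need not be continuous in the sense that $M_C$ requires.
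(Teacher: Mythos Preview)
Your proof is correct. Both you and the paper work through the inverse limit $S_x = \varprojlim_F S_F$ and identify a type with the corresponding subset of $[0,1]^x$; for connectedness of $\mathrm{im}(p)$ you cite the standard theorem that an inverse limit of compact connected Hausdorff spaces (with surjective bonding maps) is connected, while the paper spells out essentially the same argument directly by pushing a clopen partition down to a finite projection.

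The genuine divergence is in the surjectivity direction. The paper argues purely topologically: a subset $X \subseteq [0,1]^x$ lies in the image of the inverse-limit identification exactly when every finite projection $\pi_F(X)$ is a connected chain from $0$ to $1$, and this class is then shown to coincide with the connected chains from $0$ to $1$ in $[0,1]^x$; no explicit type is ever produced. You instead realize a given connected chain $C$ concretely as the type of the coordinate tuple $(f_i)_{i \in x}$ inside the model $M_C$, using the already-established fact that $M_C \equiv M_{[0,1]}$. Your route is more constructive and directly anticipates the proof of Theorem~\ref{thm_lin_cont}, where exactly this coordinate-function idea is used to recover the underlying linear continuum from a model; the paper's route is more self-contained, needing nothing about $M_L$ for general $L$ beyond the finite-type description already in hand.
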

\begin{proof}
    The type space $S_x$ is the topological inverse limit of all $S_y$ where $y$ is a finite subtuple of $x$.
    Thinking of each $S_y$ as the set of connected chains from 0 to 1 in $[0,1]^y$, the restriction maps $S_y \to S_z$ for $z \subseteq y$ are given by restricting variables of chains.
    This means that $S_x$ is homeomorphic to the inverse image of these spaces $S_y$ as a subset of $[0,1]^x$, and it suffices to determine which sets in $[0,1]^x$ have connected chains as each finite projection.
    Such sets are exactly inverse limits of directed systems of connected chains from 0 to 1 in finite-dimensional spaces - that is, those sets whose projections to finite-dimensional spaces are connected chains from 0 to 1.
    It is clear that such projections are chains from 0 to 1 if and only if the original set is a chain from 0 to 1, and that the projections of a connected set are all connected.
    It suffices now to show that a set $X$ whose finite-dimensional projections are connected chains from 0 to 1 is connected.
    Such a set $X$ is an inverse limit of closed sets by Lemma \ref{lem_connected_chain_path}, so it is itself closed, and thus compact.
    Thus if $X$ is disconnected, there are basis open sets $A, B$ such that $X \cap A, X \cap B$ partition $X$.
    However, using the standard basis of the topology, this means there is a finite subtuple $y$ of $x$ and open sets $A, B \subseteq [0,1]^y$ such that $A, B$ partition $\pi(X)$, where $\pi$ projects $[0,1]^x$ onto $[0,1]^y$.
    This contradicts the connectedness of $\pi(X)$.
\end{proof}

We can now fully characterize models of $\mathrm{Th}(M_{[0,1]})$.

\begin{thm}\label{thm_lin_cont}
    If $M \equiv M_{[0,1]}$, then $M$ is isomorphic to $M_L$ for some linear continuum with endpoints $L$.
\end{thm}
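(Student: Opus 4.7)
The plan is to construct $L$ intrinsically from $M$ as the connected chain representing the type of $M$ itself, and then to show that the natural evaluation map $M \to M_L$ is an isomorphism. Set $L := \mathrm{im}\bigl(\mathrm{tp}((m)_{m \in M})\bigr) \subseteq [0,1]^M$, interpreting the type as an element of $S_M(\emptyset)$. By Lemma \ref{lem_inf_connected_chain}, $L$ is a connected chain from $0$ to $1$ in $[0,1]^M$. The topology lemmas preceding Lemma \ref{lem_connected_chain_path} show that the subspace topology on $L$ coincides with the order topology and makes $L$ compact, so by Fact \ref{fact_lin_cont}, $L$ is a linear continuum with endpoints.

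Define $\Phi : M \to M_L$ by $\Phi(m)(c) := c(m)$. Each $\Phi(m)$ is nondecreasing (product order on $L$), continuous in the order topology, and attains $0$ and $1$ at the constant tuples of $L$, so $\Phi(m) \in M_L$. The image of $(\Phi(m_1), \ldots, \Phi(m_n)) : L \to [0,1]^n$ is the projection of $L$ onto the coordinates $m_1, \ldots, m_n$, which equals $\mathrm{im}(\mathrm{tp}(m_1, \ldots, m_n))$ by the construction of $L$. Hence $\Phi$ preserves types over $\emptyset$, so it is an elementary embedding; consequently $\Phi$ is injective and isometric, and $\Phi(M)$ is closed in $M_L$.

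It remains to show $\Phi(M)$ is dense in $M_L$. Given $f \in M_L$ and $\varepsilon > 0$, uniform continuity of $f$ on the compact space $L$ yields a finite tuple $\bar a = (a_1, \ldots, a_n) \in M^n$ and $\delta > 0$ such that $|c(a_i) - c'(a_i)| < \delta$ for all $i$ implies $|f(c) - f(c')| < \varepsilon$. Let $p := \mathrm{tp}_{M_L}(f, \Phi(a_1), \ldots, \Phi(a_n)) \in S_{n+1}(\emptyset)$; its image chain is $\{(f(c), c(a_1), \ldots, c(a_n)) : c \in L\}$. By the $\aleph_0$-categoricity of $\mathrm{Th}(M_{[0,1]})$ from \cite{no_trans}, combined with Löwenheim--Skolem, $\bar a$ lies in a separable elementary submodel $M_0 \preceq M$ with $M_0 \cong M_{[0,1]}$, which is $\aleph_0$-saturated; consequently there is $m \in M_0 \subseteq M$ with $\mathrm{tp}(m, \bar a) = p$.

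Equality of the image chains of $\mathrm{tp}(m, \bar a)$ and $p$ means that for each $c \in L$ there is $c' \in L$ with $(c(m), c(\bar a)) = (f(c'), c'(\bar a))$; the matching $\bar a$-coordinates trigger uniform continuity to give $|f(c) - f(c')| < \varepsilon$, so $|c(m) - f(c)| = |f(c') - f(c)| < \varepsilon$ for every $c$. Hence $d(\Phi(m), f) \leq \varepsilon$, so $\Phi(M)$ is dense; combined with closedness this gives $\Phi(M) = M_L$ and $\Phi$ is an isomorphism. The main obstacle is the type-realization step, which requires extracting \emph{exact} realizability of the $1$-type over $\bar a$ induced by $p$ from $\aleph_0$-categoricity (via Ryll--Nardzewski in continuous logic and Löwenheim--Skolem descent to a separable saturated submodel), and then verifying that the resulting image-chain equality really does control $|c(m) - f(c)|$ uniformly over all $c \in L$ through the chosen $\bar a$.
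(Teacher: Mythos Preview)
Your overall strategy --- constructing $L$ as the image of $\mathrm{tp}(M)$, defining the evaluation map $\Phi$, and checking type-preservation --- matches the paper's exactly. The divergence is in the density argument, and there you have a genuine (though repairable) gap.

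You assert that the separable model $M_{[0,1]}$ is $\aleph_0$-saturated because the theory is $\aleph_0$-categorical. In continuous logic this inference fails: $\aleph_0$-categoricity only yields \emph{approximate} $\aleph_0$-saturation (types over finite parameter sets are realized up to any $\varepsilon'>0$ in the $d$-metric on types), not exact realization. Indeed $M_{[0,1]}$ is \emph{not} $\aleph_0$-saturated: over the single parameter $a(t)=t$, the $2$-type whose image chain is $\{0\}\times[0,1]\cup[0,1]\times\{1\}$ extends $\mathrm{tp}(a)$ but has no realization, since any $b$ with $\mathrm{im}(b,a)$ equal to that chain would have to satisfy $b\equiv 0$ on $[0,1)$ and yet be surjective onto $[0,1]$. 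Hence the sentence ``there is $m\in M_0$ with $\mathrm{tp}(m,\bar a)=p$'' is not justified as written, and you flag exactly this step as the main obstacle.

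The repair is straightforward: approximate saturation suffices. Pick $\varepsilon'<\delta$ and $m\in M_0$ with $d(\mathrm{tp}(m,\bar a),p)<\varepsilon'$; by Lemma~\ref{lem_tp_metric} every $c\in L$ admits $c'\in L$ with $|c(m)-f(c')|<\varepsilon'$ and $|c(a_i)-c'(a_i)|<\varepsilon'<\delta$ for all $i$, whence $|\Phi(m)(c)-f(c)|\le |c(m)-f(c')|+|f(c')-f(c)|<\varepsilon'+\varepsilon$. Density follows. (A small related point: ``uniform continuity of $f$ on the compact space $L$'' is loose when $L$ is not metrizable, which can occur for large $M$; the conclusion you draw is still correct via a compactness argument on the closed set $\{(c,c')\in L\times L:|f(c)-f(c')|\ge\varepsilon\}$.)

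For comparison, the paper's density argument avoids saturation altogether. Given $g\in M_L$ it selects points $x_1<\cdots<x_n$ in $L$ with $g(x_j)=j/(n+1)$, chooses $m_i\in M$ separating consecutive $x_j$'s, and produces the approximating element as the value of an explicit \emph{definable function} of the $m_i$'s --- their average composed with a suitable increasing homeomorphism $\theta$ of $[0,1]$ --- which transfers from $M_{[0,1]}$ to $M$ by elementary equivalence. That route is more constructive and self-contained; yours is cleaner once the approximate-saturation subtlety is addressed.
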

\begin{proof}
    Let $p \in S_M$ be the type of $M$ enumerated as a tuple, and let $L = \mathrm{im}(p)$.
    By Lemma \ref{lem_inf_connected_chain}, this is a connected chain from 0 to 1, and is thus a linear continuum with endpoints.
    We now define $f : M \to M_L$. If $m \in M, x = (x_m : m \in M) \in L$, then we define $f(m)(x)=x_m$.

    Let $m_1,\dots,m_n \in M$. We wish to show that $\mathrm{tp}(m_1,\dots,m_n) = \mathrm{tp}(f(m_1),\dots,f(m_n))$, by showing the images of the types are equal.
    We know that $\mathrm{im}(\mathrm{tp}(m_1,\dots,m_n))$ is just the projection of $L = \mathrm{im}(p)$ onto the coordinates $(m_1,\dots,m_n)$,
    coinciding precisely with $\mathrm{im}(\mathrm{tp}(f(m_1),\dots,f(m_n))) = \mathrm{im}(f(m_1),\dots,f(m_n))$.
    As $f$ preserves types, it is also an isometry.
    
    Let $g \in M_L$, fix $n \in \N$, and then let $x_1< \dots < x_n \in L$ be such that for each $i$, $g(x_i) = \frac{i}{n + 1}$ for each $1 \leq i \leq n$.
    Then for each $1 \leq i < n$, there is some $m_i$ with $f(m_i)(x_i) < f(m_i)(x_{i + 1})$.
    As the sequence $\frac{1}{n - 1}\sum_{i = 1}^{n-1}f(m_i)(x_j)$ for $1 \leq j \leq n$ is strictly increasing, there is a continuous monotone bijection $\theta : [0,1] \to [0,1]$
    such that for each $1 \leq j < n$, $\theta\left(\frac{1}{n - 1}\sum_{i = 1}^{n-1}f(m_i)(x_j)\right) = \frac{j}{n + 1}$, and also $\theta(0) = 0$ and $\theta(1) = 1$.
    
    We claim that the function that sends $g_1,\dots,g_{n-1} \in M_{[0,1]}$ to the function given by $t \mapsto \theta\left(\frac{1}{n - 1}\sum_{i = 1}^{n-1}g_i(t)\right)$ is definable.
    To show this, we first show that the predicate taking the average of $n - 1$ elements of $M_{[0,1]}$ is definable, and then check that composition with $\theta$ is definable.
    As shown in \cite{defOSgrp}, it suffices to show that these functions are type-definable.
    The graph of the average function, viewed as a subset of the type space, consists of all connected chains from 0 to 1 through $[0,1]^{n}$ contained in the closed subset
    $x_n = \frac{1}{n-1}\sum_{i = 1}^{n-1}x_i$.
    The set of connected chains from 0 to 1 contained in a closed subset is closed, as the type space topology on chains is given by the Vietoris topology.
    Thus the graph of the average function is closed, as is the graph of composition with $\theta$, consisting of all chains residing in the (closed) graph of $\theta$.

    By the definability of this function, as $M \equiv M_{[0,1]}$, there is some $m \in M$ such that for each $(a_1,\dots,a_n;b) \in (m_1,\dots,m_n;m)$, $b = \theta\left(\frac{1}{n - 1}\sum_{i = 1}^{n-1}a_i\right)$.
    Thus for each $1 \leq j \leq n$, $f(m)(x_j) = \theta\left(\frac{1}{n - 1}\sum_{i = 1}^{n-1}f(m_i)(x_j)\right) = \frac{j}{n + 1} = g(x_j)$.
    Thus for all $x \in L$, $f(m)(x)$ and $g(x)$ lie in a common interval $\left[\frac{j}{n+1},\frac{j+1}{n+1}\right]$, and thus $d(f(m),g)\leq \frac{1}{n+1}$.
    Because $f$ is an isometry and $M$ is complete, this shows us that $f$ is a bijection. As $f$ also preserves types, it is an isomorphism.
\end{proof}

\subsection{Indiscernibles}

The structure $M_{[0,1]}$ is known to be NIP and in a certain precise way, purely unstable.\cite[Corollary 4.17]{ibarlucia1}
We will study its indiscernible sequences, and show that it is distal, in analogy to the $\aleph_0$-categorical structure $(\Q,<)$, whose automorphism group is $\mathrm{Hom}^+(\Q \cap [0,1])$.

We now show that all types in the theory of $M_{[0,1]}$ are determined by types on two variables.
\begin{lem}\label{lem_type_two}
If $p \in S_n$, then $p(x_1,\dots,x_n)$ is implied by $\bigcup_{i < j} p\upharpoonright_{x_i x_j}$, where $p\upharpoonright_{x_i x_j}$ is the restriction of $p$ to the variable tuple $x_ix_j$.

In general, if $p \in S_n$ and $(a_1,\dots,a_n)$ is such that for each $i < j$, $(a_i,a_j) \in \mathrm{im}(p\upharpoonright_{x_i x_j})$, then $(a_1,\dots,a_n) \in \mathrm{im}(p)$.
\end{lem}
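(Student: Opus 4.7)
The plan is to reduce both statements to the geometric claim about the images of types, and then apply a one-dimensional Helly-type argument to the parameterization of the chain provided by Lemma \ref{lem_connected_chain_path}.

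First, observe that the two statements are essentially equivalent. By Lemma \ref{lem_tp_metric}, a type $p \in S_n$ is determined by $\mathrm{im}(p)$, and for $i<j$, $\mathrm{im}(p\upharpoonright_{x_ix_j})$ is precisely the projection of $\mathrm{im}(p)$ onto the coordinates $x_i,x_j$. So the first sentence follows from the second: if every realization of $\bigcup_{i<j} p\upharpoonright_{x_ix_j}$ lies in $\mathrm{im}(p)$, then any tuple satisfying the pairwise restrictions has its type's image contained in $\mathrm{im}(p)$, and by a symmetric argument equal to it, so its full type is $p$. Thus I focus on the geometric statement.

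Let $C = \mathrm{im}(p) \subseteq [0,1]^n$, which by Lemma \ref{lem_inf_connected_chain} (or the finite version in Lemma \ref{lem_tp_metric}) is a connected chain from $0$ to $1$. By Lemma \ref{lem_connected_chain_path}, there exist continuous, nondecreasing, surjective maps $f_1,\dots,f_n : [0,1] \to [0,1]$ such that $\mathrm{im}(f_1,\dots,f_n) = C$. Given $(a_1,\dots,a_n)$ with $(a_i,a_j) \in \pi_{ij}(C)$ for all $i<j$, define
\[
T_i = f_i^{-1}(a_i) = \{t \in [0,1] : f_i(t) = a_i\}.
\]
Since each $f_i$ is continuous, nondecreasing and surjective, each $T_i$ is a nonempty closed interval in $[0,1]$. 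Our goal is to produce a single $t \in \bigcap_i T_i$, for then $(a_1,\dots,a_n) = (f_1(t),\dots,f_n(t)) \in C$.

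To apply the one-dimensional Helly principle (any finite family of pairwise intersecting closed intervals in $\mathbb{R}$ has a common point), I only need to check that $T_i \cap T_j \neq \emptyset$ for all $i<j$. By hypothesis $(a_i,a_j) \in \pi_{ij}(C) = \mathrm{im}(f_i,f_j)$, so there exists $t_{ij} \in [0,1]$ with $f_i(t_{ij}) = a_i$ and $f_j(t_{ij}) = a_j$, that is $t_{ij} \in T_i \cap T_j$. Helly's theorem in dimension one then yields $t \in \bigcap_{i=1}^n T_i$, completing the proof. I expect no real obstacle here; the only subtle point is making sure the parameterization provided by Lemma \ref{lem_connected_chain_path} genuinely has the $T_i$ nonempty and interval-shaped, which is immediate from continuity, monotonicity and surjectivity of the coordinate maps.
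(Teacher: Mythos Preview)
Your argument is correct and is essentially identical to the paper's: the paper also realizes $p$ by a tuple $(f_1,\dots,f_n)$ of continuous nondecreasing surjections from $[0,1]$ to $[0,1]$, considers the closed intervals $f_i^{-1}(\{a_i\})$, observes they intersect pairwise by the hypothesis on the $2$-variable projections, and invokes the one-dimensional Helly property to obtain a common point. The only cosmetic difference is that the paper phrases ``parameterize the chain'' as ``take a realization in $M_{[0,1]}$,'' which amounts to the same thing.
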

\begin{proof}
    Let $p \in S_n(\emptyset)$ be a type with $(a_i, a_j) \in \mathrm{im}(p\upharpoonright_{x_i x_j})$ for each $i < j$.
    Let $(f_1,\dots,f_n)$ be a realization of $p$ in $M_{[0,1]}^n$.
    Then consider the $n$ closed intervals $f_i^{-1}(\{a_i\})$.
    Because $(a_i, a_j) \in \mathrm{im}(p\upharpoonright_{x_i x_j})$, the intervals $f_i^{-1}(\{a_i\})$ and $f_j^{-1}(\{a_j\})$ must nontrivially intersect.
    Closed real intervals have the 2-Helly property - any family of intervals that intersect pairwise has a nontrivial intersection, so $(a_1,\dots,a_n)$ must be in the image of $(f_1,\dots,f_n)$.
    
    If $p,q$ are types such that for each $i<j$, $p\upharpoonright_{x_i x_j} = q\upharpoonright_{x_i x_j}$, then for each $(a_1,\dots,a_n) \in \mathrm{im}(p)$,
    we know that for each $i < j$, $(a_i,a_j) \in \mathrm{im}(q\upharpoonright_{x_i x_j})$, so $(a_1,\dots,a_n) \in \mathrm{im}(q)$.
    Thus $p = q$, and these types are determined by their restrictions to two variables.
\end{proof}

We now analyze (possibly finite) indiscernible sequences in structures elementarily equivalent to $M_{[0,1]}$.
Whenever $M \equiv M_{[0,1]}$, by Theorem \ref{thm_lin_cont}, we may assume that $M = M_L$ for some linear continuum $L$.

\begin{lem}\label{lem_mono}
    Let $L$ be a linear continuum with endpoints and let $(f_i : i \in I)$ be an indiscernible sequence in one variable in $M_L$, and let $(a_i : i \in I) \in \mathrm{im}(f_i : i \in I)$.
    Then $(a_i : i \in I)$ is either nondecreasing or nonincreasing.
\end{lem}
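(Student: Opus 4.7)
My plan is to use Lemma~\ref{lem_tp_metric} and the indiscernibility of $(f_i : i \in I)$ to exhibit a single connected chain $C \subseteq [0,1]^2$ containing every pair $(a_i, a_j)$ with $i<j$, and then to extract monotonicity from the chain ordering.

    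First, Lemma~\ref{lem_tp_metric} together with indiscernibility gives a single connected chain $C \subseteq [0,1]^2$ from $(0,0)$ to $(1,1)$ such that $\mathrm{im}(f_i, f_j) = C$ for every $i<j$ in $I$. Picking $x \in L$ with $a_i = f_i(x)$ for all $i$, we then have $(a_i, a_j) \in C$ for every $i<j$.

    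Next, I would observe that $C$ is a chain in the product order on $[0,1]^2$, so any two of its points are comparable. Applying this to $(a_i, a_j)$ and $(a_j, a_k)$ for any $i<j<k$ in $I$ yields either $a_i \leq a_j \leq a_k$ or $a_i \geq a_j \geq a_k$: a strict peak $a_i < a_j > a_k$ or valley $a_i > a_j < a_k$ at $j$ would leave those two points of $C$ incomparable.

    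To conclude, I would invoke the elementary combinatorial fact that a sequence whose every three-term subsequence is monotonic is itself monotonic: given a pair $p<q$ with $a_p > a_q$, triple-monotonicity applied to triples of the forms $(k, p, q)$, $(p, k, q)$, and $(p, q, k)$ as $k$ ranges over $I$ forces $a_{k_1} \geq a_{k_2}$ for all $k_1 < k_2$ in $I$; the symmetric conclusion handles the case when some pair is strictly ascending. I foresee no serious obstacle; the heart of the argument is the chain property of $C$ applied at overlapping pairs of indices.
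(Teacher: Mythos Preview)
Your argument is correct and is essentially the paper's own: both reduce to the three-term case, use indiscernibility to identify all two-variable images with a single set $C$, and read off three-term monotonicity from the chain structure. The only cosmetic difference is that where you compare $(a_i,a_j)$ and $(a_j,a_k)$ abstractly as two product-order--comparable points of $C$, the paper performs the same comparison concretely: it transfers $(b,c)$ from $\mathrm{im}(g,h)$ to $\mathrm{im}(f,g)$, picks a witness $t_1\in L$, and reads off $a\le b$ from the monotonicity of $f$ and $g$.

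One small slip in your final paragraph: triples of the forms $(k,p,q)$, $(p,k,q)$, $(p,q,k)$ --- i.e., those containing \emph{both} $p$ and $q$ --- only locate each individual $a_k$ relative to $a_p$ and $a_q$. To compare $a_{k_1}$ with $a_{k_2}$ when $k_1<k_2$ both lie below $p$, both lie in $(p,q)$, or both lie above $q$, you still need a triple such as $(k_1,k_2,q)$ or $(p,k_1,k_2)$. The fact you are invoking is of course true and the fix is routine.
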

\begin{proof}
    It suffices to show that if $(f,g,h) \in M_L^3$ is indiscernible, and there is some $t_0 \in L$ such that $(f,g,h)(t_0) = (a,b,c)$, then $a \leq b \leq c$ or $a \geq b \geq c$.

    If $b = c$, this is trivial, so we may assume without loss of generality that $b < c$.
    By indiscernibility, there also exists $t_1$ such that $(f,g)(t_1) = (b,c)$.
    Then because $g(t_0) = b < c = g(t_1)$, we know that $t_0 < t_1$, and thus $a = f(t_0) \leq f(t_1) = b$.
\end{proof}

\begin{lem}\label{lem_diag}
    Let $L$ be a linear continuum with endpoints and let $(f_i : i \in I)$ be an indiscernible sequence in one variable in $M_L$ of length at least 3, and let $i < j$ be elements of $I$, with 
    $(a,b) \in \mathrm{im}(f_i, f_j)$.
    Then either $(a,a)\in \mathrm{im}(f_i, f_j)$ or $(b,b) \in \mathrm{im}(f_i, f_j)$.
\end{lem}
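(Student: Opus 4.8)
The plan is to argue by contradiction, after a few reductions. If $a = b$ then $(a,a) = (a,b) \in \mathrm{im}(f_i,f_j)$; and since every element of $M_L$ sends $\min L$ to $0$ and $\max L$ to $1$, both $(0,0) = (f_i,f_j)(\min L)$ and $(1,1) = (f_i,f_j)(\max L)$ lie in $\mathrm{im}(f_i,f_j)$, so we are done whenever $a$ or $b$ lies in $\{0,1\}$. Composing with the evident isomorphism $M_L \cong M_{L^{\mathrm{op}}}$ given by $f \mapsto 1-f$ (which sends indiscernible sequences to indiscernible sequences and $(a,b)$ to $(1-a,1-b)$), I may therefore assume $0 < a < b < 1$. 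Write $\Gamma := \mathrm{im}(f_i,f_j)$ and suppose toward a contradiction that $(a,a) \notin \Gamma$ and $(b,b) \notin \Gamma$. As $|I| \ge 3$, pick $i_1 < i_2 < i_3$ in $I$ and set $g_k = f_{i_k}$; by indiscernibility every increasing pair realises the same type, so $\mathrm{im}(g_p,g_q) = \Gamma$ for all $p < q$, and the argument will hinge on the fact that the three pairwise images of $(g_1,g_2,g_3)$ all equal $\Gamma$.

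The key lemma I would extract is: if $(a,b) \in \Gamma$ with $0 < a < b < 1$ and $(a,a),(b,b) \notin \Gamma$, then there is $e \in (a,b)$ with $(a,e) \in \Gamma$ and $(e,b) \in \Gamma$. To prove it, set $T := \mathrm{im}(g_1,g_2,g_3)$. By Lemmas~\ref{lem_tp_metric} and~\ref{lem_inf_connected_chain}, $T$ is a connected chain from $0$ to $1$ in $[0,1]^3$, hence a linear continuum with endpoints; and by Lemma~\ref{lem_type_two}, $T = \{(c_1,c_2,c_3) : (c_1,c_2),(c_2,c_3),(c_1,c_3) \in \Gamma\}$. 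Each coordinate projection $\pi_m : T \to [0,1]$ is continuous, nondecreasing for the chain order, and surjective, so $J_m := \pi_m^{-1}(a)$ is a nonempty closed subinterval of $T$. I would first show $J_3 < J_2 < J_1$: for $R \in J_1$, $(a,\pi_2(R)) \in \Gamma$ and $\pi_2(R) \ne a$, and $\pi_2(R) < a$ is impossible because then $a$ would fall inside the interval $\{y : (a,y) \in \Gamma\}$ (which contains $\pi_2(R)$ and $b$), so $\pi_2(R) > a$; dually $R \in J_3$ forces $\pi_2(R) < a$, since $(\pi_2(R),a) \in \Gamma$ while any $(x,a) \in \Gamma$ with $x > a$ would be incomparable with $(a,b)$ in the chain $\Gamma$. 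Since $\pi_2$ is monotone on $T$ and equals $a$ precisely on $J_2$, this gives $J_3 < J_2 < J_1$, and the open interval $U := (\max J_2, \min J_1)_T$ is nonempty because $T$ is densely ordered.

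Next I would show $\pi_3 \equiv b$ on $U$. For $R \in U$ one has $\pi_1(R) < a$ (else $R \in J_1$ with $R < \min J_1$) and $\pi_2(R) > a$ (else $R \in J_2$ with $R > \max J_2$); then $(\pi_1(R),\pi_2(R)),(\pi_1(R),\pi_3(R)) \in \Gamma$ with first coordinate $< a$ force both to be $\le (a,b)$, giving $\pi_2(R) \le b$ and $\pi_3(R) \le b$, whereas $(\pi_2(R),\pi_3(R)) \in \Gamma$ with first coordinate $> a$ forces it to be $\ge (a,b)$, giving $\pi_3(R) \ge b$; so $\pi_3(R) = b$. As $\pi_3$ is continuous and $\min J_1 = \sup U$, also $\pi_3(\min J_1) = b$; moreover $\pi_1(\min J_1) = a$, and $e := \pi_2(\min J_1)$ satisfies $e > a$ (since $\min J_1 \in J_1$), $e \le b$ (passing the bound $\pi_2 \le b$ on $U$ to the limit), and $e \ne b$ (else $(b,b) = (\pi_2,\pi_3)(\min J_1) \in \Gamma$). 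So $\min J_1 = (a,e,b)$ with $a < e < b$, and reading off $(a,e) = (\pi_1,\pi_2)(\min J_1) \in \Gamma$ and $(e,b) = (\pi_2,\pi_3)(\min J_1) \in \Gamma$ proves the lemma.

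To finish, let $\beta := \min\{y : (a,y) \in \Gamma\}$; this set is nonempty (it contains $b$) and closed, and it omits $a$ since $(a,a) \notin \Gamma$, so $a < \beta \le b$. Then $(\beta,\beta) \notin \Gamma$: if $\beta = b$ this is the hypothesis, and if $\beta < b$ then $(\beta,\beta)$ would be incomparable with $(a,b) \in \Gamma$, contradicting that $\Gamma$ is a chain. Applying the key lemma to the point $(a,\beta) \in \Gamma$ then produces $e' \in (a,\beta)$ with $(a,e') \in \Gamma$, contradicting the minimality of $\beta$. Hence the standing assumption fails and $(a,a) \in \Gamma$ or $(b,b) \in \Gamma$. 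The step I expect to be genuinely delicate is $\pi_3 \equiv b$ on $U$: this is where indiscernibility is used essentially (through the coincidence of the three pairwise images of $(g_1,g_2,g_3)$), and it requires care that $U$ is a nonempty order-interval of the linear continuum $T$ and that $\pi_3$ is pinned down up to the endpoint $\min J_1$.
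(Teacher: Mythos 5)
Your proof is correct, but it takes a genuinely different and substantially longer route than the paper's. The paper's argument is direct and short: working with an indiscernible triple $(f,g,h)$, it extends $(a,b)$ to some $(a,b,c)\in\mathrm{im}(f,g,h)$ with $c\ge b$ (Lemma~\ref{lem_mono}); if $c=b$ then $(b,b)\in\mathrm{im}(g,h)=\mathrm{im}(f,g)$, and otherwise it picks $t_0,t_1,t_2\in L$ witnessing $(f,h)(t_0)=(a,b)$, $(g,h)(t_1)=(a,c)$, $(f,g,h)(t_2)=(a,b,c)$ (all possible since the three pairwise images coincide), deduces $t_0<t_1<t_2$ from monotonicity of $h$ and $g$, and then squeezes $f(t_1)=a$, giving $(a,a)\in\mathrm{im}(f,g)$. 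You instead argue by contradiction: after a reduction to $0<a<b<1$, you prove a key structural lemma (if $(a,a),(b,b)\notin\Gamma$ then one can always manufacture $e\in(a,b)$ with $(a,e),(e,b)\in\Gamma$) by a careful order-analysis of the triple image $T\subseteq[0,1]^3$ and its fibres $J_1,J_2,J_3$, then run a minimality argument on $\beta=\min\{y:(a,y)\in\Gamma\}$ to reach a contradiction. Both proofs use Lemma~\ref{lem_type_two} to control the 3-variable image and monotonicity in an essential way, but yours spends most of its effort establishing the intermediate-point lemma, which the paper's direct choice of $t_0,t_1,t_2$ sidesteps entirely. One small imprecision: the map $f\mapsto 1-f$ is not an isomorphism of $\mathcal{L}$-structures $M_L\to M_{L^{\mathrm{op}}}$ (it reflects the type space rather than preserving it); fortunately you only use that it transports indiscernible sequences to indiscernible sequences and conjugates images by $(x,y)\mapsto(1-x,1-y)$, which does hold. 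Also, the reduction is not strictly necessary: the paper's direct argument goes through symmetrically when $a>b$, and the cases $a\in\{0,1\}$ or $b\in\{0,1\}$ are absorbed automatically.
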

\begin{proof}
    Let $(f,g,h) \in M_L^3$ be indiscernible, with $(a,b) \in \mathrm{im}(f, g)$. Without loss of generality, assume $a < b$.
    There must be some $c$ with $(a,b,c) \in \mathrm{im}(f,g,h)$, and by Lemma \ref{lem_mono}, $b \leq c$.
    If $b = c$, then $(b,b) \in \mathrm{im}(g,h) = \mathrm{im}(f,g)$, and we are done.
    Otherwise, $b < c$.
    
    Let $t_0,t_1,t_2 \in L$ be such that$(f,h)(t_0) = (a,b)$, $(g,h)(t_1) = (a,c)$, $(f,g,h)(t_2) = (a,b,c)$.
    By monotonicity of $h$, $t_0 < t_1$, and by monotonicity of $g$, $t_1 < t_2$.
    By monotonicity of $f$, then, $f(t_1) = a$, so $(a,a)\in \mathrm{im}(f,g)$.

    As the desired property is true for length-3 indiscernible sequences in $M_L$, it is also true for all longer indiscernibles.
\end{proof}

\begin{lem}\label{lem_1dim_ind}
    Let $L$ be a linear continuum with endpoints. Any indiscernible sequence in one variable in $M_L$ is distal.
\end{lem}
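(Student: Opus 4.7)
The plan is to reduce the distality condition, via Lemma \ref{lem_type_two}, to a one-variable claim that can be settled by Lemma \ref{lem_mono} together with a fiber-by-fiber comparison. Write the indiscernible sequence as $(f_i : i \in I_1) + g + (h_j : j \in I_2)$, indiscernible over $\emptyset$, and suppose $(f_i : i \in I_1) + (h_j : j \in I_2)$ is indiscernible over a parameter tuple $d$; I must show that the full sequence is indiscernible over $d$. Enumerating $d$ as $(d_1,\dots,d_\ell)$ and applying Lemma \ref{lem_type_two} to the joint type of $(d,\bar a)$ for an arbitrary increasing tuple $\bar a$ from the extended sequence, the claim reduces to showing, for each $k$ and each $i \in I_1$, that $\mathrm{tp}(d_k,g) = \mathrm{tp}(d_k,f_i)$. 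The remaining 2-types $\mathrm{tp}(a_r,a_s)$ are constant by the $\emptyset$-indiscernibility of the full sequence, and $\mathrm{tp}(d_k,f_i) = \mathrm{tp}(d_k,h_j)$ is constant by indiscernibility over $d$.

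Fix such a $d_k$, which I will just call $d$. By Lemma \ref{lem_tp_metric}, the desired equality of types becomes $\mathrm{im}(d,g) = \mathrm{im}(d,f_i)$ as connected chains in $[0,1]^2$. Since $d \in M_L$ is nondecreasing and continuous with $\inf d = 0$ and $\sup d = 1$, and $L$ is a linear continuum with endpoints (hence compact and connected), $d$ is surjective onto $[0,1]$, and each preimage $d^{-1}(r)$ is a closed order-convex subset of $L$, hence a closed interval $[t_-(r),t_+(r)]$. Any $h \in M_L$ is itself nondecreasing and continuous, so the fiber of $\mathrm{im}(d,h)$ over $r$ is the closed interval $[h(t_-(r)),h(t_+(r))]$. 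Thus it suffices to verify $g(t_\pm(r)) = f_i(t_\pm(r))$ for every $r \in [0,1]$.

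To do this, I apply Lemma \ref{lem_mono} to the full indiscernible sequence evaluated at the common point $t_-(r)$: the real values $\bigl(f_i(t_-(r)),\, g(t_-(r)),\, h_j(t_-(r))\bigr)$, arrayed in the order of the sequence, form a monotone sequence. Consequently $g(t_-(r))$ lies in the closed interval bounded by $f_i(t_-(r))$ and $h_j(t_-(r))$ for any $i \in I_1$ and $j \in I_2$. On the other hand, the indiscernibility of $(f_i) + (h_j)$ over $d$ gives $\mathrm{im}(d,f_i) = \mathrm{im}(d,h_j)$, so their fibers over $r$ are the same interval; equating left endpoints yields $f_i(t_-(r)) = h_j(t_-(r))$. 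Then $g(t_-(r))$, sandwiched between two equal values, also equals them. The analogous argument at $t_+(r)$, comparing right endpoints of fibers, gives $g(t_+(r)) = f_i(t_+(r))$, and therefore every fiber of $\mathrm{im}(d,g)$ coincides with the corresponding fiber of $\mathrm{im}(d,f_i)$, giving the required equality of images.

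The main technical nuance, which the sandwich argument absorbs automatically, is that Lemma \ref{lem_mono} only controls the values at each fixed $t$, and the direction of monotonicity may depend on $t$; however, the sandwich only uses that $g(t_\pm(r))$ lies in the closed interval spanned by $f_i(t_\pm(r))$ and $h_j(t_\pm(r))$, and this is insensitive to the direction.
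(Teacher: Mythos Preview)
Your argument is correct and takes a genuinely different route from the paper's. The paper invokes \cite[Lemma 5.3]{anderson1} to reduce distality to a ``two-removal'' criterion on a finite subsequence, and then proves $\mathrm{tp}(f_2,f_4)=\mathrm{tp}(f_1,f_3)$ using Lemma~\ref{lem_diag} (the diagonal lemma) together with Lemmas~\ref{lem_mono} and~\ref{lem_type_two}. You instead work directly from the insertion definition of a distal sequence, reduce via Lemma~\ref{lem_type_two} to equality of the $2$-types $\mathrm{tp}(d_k,g)=\mathrm{tp}(d_k,f_i)$, and then settle this by slicing $\mathrm{im}(d_k,\,\cdot\,)$ into fibers over each $r\in[0,1]$: the equality $\mathrm{im}(d_k,f_i)=\mathrm{im}(d_k,h_j)$ forces the fiber endpoints $f_i(t_\pm(r))$ and $h_j(t_\pm(r))$ to coincide, and Lemma~\ref{lem_mono} sandwiches $g(t_\pm(r))$ between them. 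This bypasses both the external citation and Lemma~\ref{lem_diag} entirely, giving a more self-contained proof; it also handles arbitrary parameter sets $d$ in one stroke. The paper's approach, on the other hand, isolates the $5$-tuple criterion and Lemma~\ref{lem_diag} as reusable ingredients, which it then leverages in the subsequent Proposition extending distality to tuples via Lemma~\ref{lem_code}. One minor remark: your fiber description $[h(t_-(r)),h(t_+(r))]$ tacitly uses that $d^{-1}(r)$ is nonempty for every $r$, which you correctly justify from compactness and connectedness of $L$; it would do no harm to note explicitly that equality of images follows because a subset of $[0,1]^2$ is the union of its fibers.
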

\begin{proof}
    By \cite[Lemma 5.3]{anderson1}, it suffices to show that if $(f_i : i \in I)$ is a sequence of elements in $M_L$, and $i_0 < i_1$ are such that $(i_0,i_1)$ is infinite and removing either $f_{i_0}$ or $f_{i_1}$ makes the sequence indiscernible, then $(f_i : i \in I)$ is indiscernible.
    To do this, we show that $\mathrm{tp}(f_{i_0},f_{i_1}) = \mathrm{tp}(f_{i},f_{j})$ for all other $i < j$.
    This will even apply for sequences of finite length - if $f_1,\dots,f_5 \in M$ are such that $(f_1,f_2,f_3,f_5)$ and $(f_1,f_3,f_4,f_5)$ are indiscernible,
    then $\mathrm{tp}(f_2,f_4) = \mathrm{tp}(f_1,f_3)$, and this will imply indiscernibility for any infinite sequence containing these elements and satisfying the above properties.

    Let $(a,b) \in \mathrm{im}(f_1,f_3)$. By Lemma \ref{lem_diag}, either $(a,a)\in \mathrm{im}(f_1,f_3)$ or $(b,b) \in \mathrm{im}(f_1,f_3)$.
    Without loss of generality, we may assume the former case.
    By Lemma \ref{lem_type_two} and indiscernibility, we see that $(a,a,b) \in \mathrm{im}(f_1,f_3,f_4)$, so there is some $t$ with $(f_1,f_3,f_4)(t) = (a,a,b)$, and by Lemma \ref{lem_mono},
    we have that $f_2(t) = a$ as well, so $(a,b) \in \mathrm{im}(f_2,f_4)$, so $\mathrm{tp}(f_2,f_4) = \mathrm{tp}(f_1,f_3)$ as desired.
\end{proof}

We now show that the interaction between tuples in any $M_L$ can be coded by their averages.
\begin{lem}\label{lem_code}
Let $L$ be a linear continuum with endpoints and let $\bar f = (f_1,\dots,f_n)$ and $\bar g = (g_1,\dots,g_n)$ be tuples in $M_L$.
Define $\hat{f} = \frac{1}{n}\sum_{i = 1}^n f_n$ and $\hat{g} = \frac{1}{n}\sum_{i = 1}^n g_n$.
Then $\mathrm{tp}(\bar f, \bar g)$ is determined by $\mathrm{tp}(\bar f), \mathrm{tp}(\bar g), \mathrm{tp}(\hat{f},\hat{g})$.
\end{lem}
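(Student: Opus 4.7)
The plan is to convert the question about types into a question about images using the earlier characterization, and then to reduce to a canonical normal form for $\bar f$ and $\bar g$ in which the coupling between the two tuples is governed entirely by the joint behavior of their averages.

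First I would observe that by the correspondence between types and images from Lemma \ref{lem_tp_metric} (extended via the transfer in Theorem \ref{thm_lin_cont} to the general $M_L$), the joint type $\mathrm{tp}(\bar f, \bar g)$ is determined by $\mathrm{im}(\bar f, \bar g) \subseteq [0,1]^{2n}$. So it suffices to show this image is a function of the three given data.

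Next, I would set up the canonical factorization. For $\bar f \in M_L^n$, a sum of nondecreasing functions is constant on an interval only if each summand is constant there; hence $\bar f$ is constant on each level set of $\hat f$, and therefore factors as $\bar f = \bar f^* \circ \hat f$, where $\bar f^*$ is a well-defined tuple on $\mathrm{im}(\hat f)=[0,1]$. Applying $\frac{1}{n}\sum$ and using surjectivity of $\hat f$ gives $\hat{\bar f^*}=\mathrm{id}$, so $\bar f^* \in M_{[0,1]}^n$ is a canonical representative, and it is the unique element of $M_{[0,1]}^n$ with average identity having image $\mathrm{im}(\mathrm{tp}(\bar f))$; hence $\bar f^*$ is determined by $\mathrm{tp}(\bar f)$ alone. (This is essentially the normal form used in \cite{no_trans} and invoked in the proof of Lemma \ref{lem_tp_metric}.) Similarly $\bar g = \bar g^* \circ \hat g$ with $\bar g^*$ determined by $\mathrm{tp}(\bar g)$.

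The final step is the direct calculation
\[
\mathrm{im}(\bar f,\bar g) \;=\; \{(\bar f(t),\bar g(t)) : t\in L\} \;=\; \{(\bar f^*(\hat f(t)),\bar g^*(\hat g(t))) : t\in L\} \;=\; \{(\bar f^*(s),\bar g^*(u)) : (s,u)\in \mathrm{im}(\hat f,\hat g)\},
\]
where the last equality uses that $t \mapsto (\hat f(t),\hat g(t))$ surjects onto $\mathrm{im}(\hat f,\hat g)$. The right-hand side depends only on $\bar f^*$, $\bar g^*$, and $\mathrm{im}(\hat f,\hat g)$, i.e.\ only on the three types in the statement, so $\mathrm{im}(\bar f,\bar g)$ and hence $\mathrm{tp}(\bar f,\bar g)$ is determined.

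The main obstacle is the canonical factorization step: one must verify that in the setting of general $M_L$ (not just $M_{[0,1]}$), the tuple $\bar f$ genuinely factors through $\hat f$ and yields a well-defined $\bar f^* \in M_{[0,1]}^n$ in $M_{[0,1]}$ whose type is the same as that of $\bar f$, and that this $\bar f^*$ is uniquely determined by $\mathrm{tp}(\bar f)$. Once that is in hand, everything else is essentially a one-line image computation.
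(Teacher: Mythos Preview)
Your proof is correct and takes essentially the same approach as the paper: both reduce to describing $\mathrm{im}(\bar f,\bar g)$ in terms of the three given images, using that for each value $a$ of $\hat f$ there is a unique $\bar a$ with $(\bar a,a)\in\mathrm{im}(\bar f,\hat f)$, which is precisely your map $\bar f^*$. The paper phrases this purely in terms of images (so $\mathrm{im}(\bar f,\bar g)$ is the set of $(\bar a,\bar b)$ with $\bar a\in\mathrm{im}(\bar f)$, $\bar b\in\mathrm{im}(\bar g)$, and averages in $\mathrm{im}(\hat f,\hat g)$), while you make the factorization $\bar f=\bar f^*\circ\hat f$ explicit; these are the same argument, and the ``obstacle'' you flag is exactly the one-line observation the paper makes about monotonicity and surjectivity.
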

\begin{proof}
    Clearly $\mathrm{im}(\bar f,\hat{f})$ is determined by $\mathrm{im}(\bar f)$.
    By the monotonicity of $\bar f$ and the surjectivity of $\hat{f}$, for any $a \in L$, there is exactly one $\bar a \in L^n$ such that $(\bar a, a) \in \mathrm{im}(\bar f,\hat{f})$.
    Thus $\mathrm{im}(\bar f, \bar g)$ consists of all $(\bar a,\bar b)$ such that if $\hat{a}, \hat{b}$ are the averages of $\bar a,\bar b$,
    then $\bar a \in \mathrm{im}(\bar f), \bar b \in \mathrm{im}(\bar g),$ and $(\hat{a},\hat{b}) \in \mathrm{im}(\hat{f},\hat{g})$.
\end{proof}

\begin{prop}
Then the structure $M_{[0,1]}$ is distal.
\end{prop}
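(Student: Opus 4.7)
The plan is to apply \cite[Theorem 5.22]{anderson1} and reduce distality to the insertion criterion, in direct parallel to the proof for $RCMVF$ above: it suffices to check that whenever $(\bar a_i : i \in \Q) + \bar b + (\bar c_j : j \in \Q)$ is an indiscernible sequence of $n$-tuples in $M_{[0,1]}$ with $(\bar a_i) + (\bar c_j)$ indiscernible over a singleton $d$, the full sequence is also indiscernible over $d$.

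The central tool is Lemma \ref{lem_type_two}, which says that the type of any tuple of single variables is determined by its two-variable restrictions. I would concatenate an arbitrary finite sub-sequence of $(\bar a_i) + \bar b + (\bar c_j)$ together with $d$ into a single long tuple of single functions; the type of this tuple over $\emptyset$ is then determined by its pair types, and this in turn determines the type of the sub-sequence over $d$. Pairs that avoid $d$ altogether are handled by the indiscernibility of the sequence over $\emptyset$; the pair $(d,d)$ is trivial; so the only remaining pair types to control are those of the form $(e,d)$, where $e$ ranges over the single-function coordinates appearing in some $\bar a_i$, $\bar b$, or $\bar c_j$.

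For each fixed coordinate index $k \in \{1, \ldots, n\}$, the one-dimensional projection $(a_{i,k} : i \in \Q) + b_k + (c_{j,k} : j \in \Q)$ is an indiscernible sequence in $M_{[0,1]}$, and its sub-sequence $(a_{i,k}) + (c_{j,k})$ is indiscernible over the singleton $d$, both facts being immediate projections of the hypotheses. Lemma \ref{lem_1dim_ind} asserts that every one-variable indiscernible sequence in (any model of) this theory is distal, so inserting $b_k$ preserves indiscernibility over the parameter $d$. This controls every remaining pair $(e,d)$ and, combined with Lemma \ref{lem_type_two}, yields that $(\bar a_i) + \bar b + (\bar c_j)$ is indiscernible over $d$, as required.

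The main subtlety is in the last step: Lemma \ref{lem_1dim_ind} was established via the internal combinatorial criterion of \cite[Lemma 5.3]{anderson1}, and we must appeal to the equivalence between that criterion and the parameter-insertion form of distality for one-dimensional sequences in order to conclude that inserting $b_k$ into $(a_{i,k}) + (c_{j,k})$ preserves indiscernibility over $d$. Once that equivalence is invoked, the reduction via Lemma \ref{lem_type_two} is essentially bookkeeping.
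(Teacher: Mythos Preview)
Your proof is correct but follows a genuinely different route from the paper's.  The paper never switches to the parameter-insertion criterion of \cite[Theorem~5.22]{anderson1}; instead it stays with the internal ``two-element removal'' criterion of \cite[Lemma~5.3]{anderson1} for sequences of $n$-tuples, exactly as in Lemma~\ref{lem_1dim_ind}.  Its reduction to the one-variable case is via the averaging lemma (Lemma~\ref{lem_code}): the type $\mathrm{tp}(\bar f,\bar g)$ of a pair of $n$-tuples is determined by $\mathrm{tp}(\bar f)$, $\mathrm{tp}(\bar g)$, and the \emph{scalar} type $\mathrm{tp}(\hat f,\hat g)$ of their coordinate averages, so the two-removal criterion for $n$-tuples follows immediately from the already-proved one-variable case applied to the sequence of averages.

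Your route instead projects coordinate-wise and uses Lemma~\ref{lem_type_two} to control all pair types with the external parameter $d$.  This bypasses Lemma~\ref{lem_code} entirely and mirrors the $RCMVF$ argument nicely, at the cost (which you correctly flag) of invoking the equivalence between the Lemma~5.3 criterion and the parameter-insertion form of distality for the one-variable sequences.  The paper's route is more self-contained in that it never introduces the parameter $d$ and uses a single criterion throughout; yours is arguably more conceptual and makes the reduction to one variable more transparent.
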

\begin{proof}
    Let $(f_i : i \in I)$ be an indiscernible sequence in an elementary extension of $M_{[0,1]}$, which we may assume is $M_L$ for some linear continuum $L$ with endpoints.
    As in the proof of Lemma \ref{lem_1dim_ind} but with longer tuples,
    we will just show that if $(f_1,\dots,f_5) \in (M_{L}^n)^5$ is such that $(f_1,f_2,f_3,f_5)$ and $(f_1,f_3,f_4,f_5)$ are indiscernible, then
    the $\mathrm{tp}(f_2,f_4) = \mathrm{tp}(f_1,f_3)$.

    Clearly for each $1 \leq i,j \leq 5$, $\mathrm{tp}(f_i) = \mathrm{tp}(f_j)$.
    For $1 \leq i \leq 5$, let $\hat{f_i} \in M_L$ be the pointwise average of the tuple $f_i$.
    By indiscernibility of these subsequences, we can deduce that $(\hat{f_1},\dots,\hat{f_5})$ is indiscernible, so by the proof of Lemma \ref{lem_1dim_ind},
    $\mathrm{tp}(\hat{f_2},\hat{f_4}) = \mathrm{tp}(\hat{f_1},\hat{f_3})$.
    By Lemma \ref{lem_code}, this constrains the types of the tuples enough that $\mathrm{tp}(f_2,f_4) = \mathrm{tp}(f_1,f_3)$.
\end{proof}

We can say more about indiscernibles.
\begin{thm}
    If $p \in S_2(\emptyset)$, then $p$ is the type $(f_i,f_j)$ with $i < j$ in some infinite indiscernible sequence $(f_i : i \in I)$
    if and only if for all $(a,b) \in \mathrm{im}(p)$, either $(a,a) \in \mathrm{im}(p)$ or $(b,b) \in \mathrm{im}(p)$.
\end{thm}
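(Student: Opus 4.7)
The forward direction is immediate from Lemma~\ref{lem_diag}. For the converse, suppose $p$ has the diagonal property. By Lemma~\ref{lem_type_two}, any sequence $(f_i : i < \omega)$ in a saturated elementary extension with $\mathrm{tp}(f_i, f_j) = p$ for all $i < j$ is automatically indiscernible (each finite subtuple's type is determined by its pairwise restrictions), so it suffices to realize such a sequence. By Lemma~\ref{lem_inf_connected_chain} this reduces to exhibiting, for each $n \geq 2$, a connected chain from $(0,\dots,0)$ to $(1,\dots,1)$ in $[0,1]^n$ whose every two-coordinate projection equals $\mathrm{im}(p)$. The candidate I would take is
\[
C_n = \{(a_1, \dots, a_n) \in [0,1]^n : (a_i, a_j) \in \mathrm{im}(p) \text{ for all } i < j\};
\]
the chain property and the inclusion of $(0,\dots,0), (1,\dots,1)$ are inherited directly from $\mathrm{im}(p)$.

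The key technical ingredient is a structural analysis of $\mathrm{im}(p)$. Let $D = \{t : (t,t) \in \mathrm{im}(p)\}$, a closed subset of $[0,1]$ containing $0$ and $1$. I would show that for each maximal complementary open interval $(b_-, b_+)$ of $[0,1] \setminus D$, the subchain of $\mathrm{im}(p)$ lying strictly between $(b_-, b_-)$ and $(b_+, b_+)$ is an L-shape joining them via either the corner $(b_-, b_+)$ or $(b_+, b_-)$, according as this subchain lies above or below the diagonal. This follows by tracking a continuous path parameterization (from Lemma~\ref{lem_connected_chain_path}) and observing that the diagonal hypothesis applied at each interior point forces either the first coordinate to still equal $b_-$ or the second to have already reached $b_+$ (case II; case I is symmetric).

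With this in hand the remaining work is fairly routine. Since the definition of $C_n$ is symmetric in the coordinates, all pairwise projections coincide, so it suffices to show $\pi_{1,2}(C_n) = \mathrm{im}(p)$: given $(a,b) \in \mathrm{im}(p)$, build $(a,b,c,\dots,c) \in C_n$ by choosing $c \in D$ with $(a,c), (b,c) \in \mathrm{im}(p)$---take $c = b$ when $b \in D$, and otherwise use the L-shape structure (which forces $a \in \{b_-, b_+\}$ where $(b_-, b_+)$ is the complementary interval containing $b$) to take $c$ as the ``other'' endpoint. The main obstacle, connectedness of $C_n$, follows from a similar coordinate-wise analysis showing every element of $C_n$ either has all coordinates equal (necessarily in $D$) or has all coordinates in some such $[b_-, b_+]$, in which case it has the form $(b_-, \dots, b_-, \alpha, b_+, \dots, b_+)$. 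Each resulting ``L-sub-region'' is then a concatenation of $n$ line segments joining $(b_-, \dots, b_-)$ to $(b_+, \dots, b_+)$, and these sub-regions glue to the ``diagonal part'' $\{(t, \dots, t) : t \in D\}$ at their boundary points; $C_n$ then inherits path-connectedness from a single continuous path that traverses $D$ diagonally and detours through each L-sub-region as it encounters the corresponding complementary interval, with the continuity of this (possibly countable) concatenation ensured by the fact that the L-sub-region over an interval of length $\ell$ has diameter $O(\ell)$.
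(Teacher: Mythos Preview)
Your approach is essentially the paper's, with the same skeleton (forward direction via Lemma~\ref{lem_diag}; converse by building length-$n$ realizations for each $n$, then compactness plus Lemma~\ref{lem_type_two}) and the same key structural insight: over each complementary interval $(b_-,b_+)$ of $[0,1]\setminus D$, the chain $\mathrm{im}(p)$ is an L-shape through one of the corners $(b_-,b_+)$ or $(b_+,b_-)$. The only real difference is in execution. The paper normalizes a realization $(f,g)$ of $p$ so that $\tfrac{f+g}{2}=\mathrm{id}$, reads the L-shape off in the domain (as the sign decomposition $A_-\cup A_0\cup A_+$ of $f-g$), and then writes down explicit piecewise-linear $f_1,\dots,f_n\in M_{[0,1]}$ on each component. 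You instead work directly in the type space, defining the candidate $n$-type as the set $C_n$ and verifying it is a connected chain. Both routes are short once the L-shape picture is in hand; the paper's buys automatic connectedness (continuous functions), yours avoids choosing a parameterization. Your connectedness argument could in fact be shortened: the map $(c_1,\dots,c_n)\mapsto \tfrac{1}{n}\sum c_i$ is a continuous bijection from $C_n$ onto $[0,1]$, hence a homeomorphism.

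One small slip to fix: you write ``since the definition of $C_n$ is symmetric in the coordinates, all pairwise projections coincide.'' The definition is \emph{not} symmetric in the coordinates when $\mathrm{im}(p)$ is not symmetric as a subset of $[0,1]^2$ (e.g.\ the single upper-L), so you cannot reduce to $\pi_{1,2}$ by a symmetry argument alone. What you actually need, $\pi_{i,j}(C_n)=\mathrm{im}(p)$ for every $i<j$, does follow from the explicit description you give of $C_n$ over each L-region: the tuples $(b_-,\dots,b_-,\alpha,b_+,\dots,b_+)$ already project onto the full L-shape for every ordered pair $i<j$. So the content is there; just replace the symmetry sentence with that observation.
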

\begin{proof}
    Lemma \ref{lem_diag} tells us that the type of any pair in an indiscernible has this property.
    Now assume that $p \in S_2(\emptyset)$ is such that for all $(a,b) \in \mathrm{im}(p)$, either $(a,a) \in \mathrm{im}(p)$ or $(b,b) \in \mathrm{im}(p)$.
    We will show that for any $n$, there are $f_1,\dots,f_n \in M_{[0,1]}$ with $\mathrm{tp}(f_i,f_j) = p$ for all $i < j$, so by compactness, in some elementary extension, there is an infinite sequence
    $(f_i : i \in I)$ such that for all $i < j$, $\mathrm{tp}(f_i,f_j) = p$.
    By Lemma \ref{lem_type_two}, $(f_i : i \in I)$ is indiscernible.

    As a consequence of \cite[Theorem 3.2]{no_trans}, there are $f,g \in M_{[0,1]}$ such that $\mathrm{tp}(f,g) = p$ and $\frac{f(t) + g(t)}{2} = t$ is the identity.
    Thus we may partition $[0,1]$ into three disjoint sets, $A_-,A_0,A_+$, on which $f - g$ is respectively negative, 0, and positive, and note that $A_-, A_+$ are open while $A_+$ is closed.
    Thus also $A_-$ and $A_+$ each consist of a countable number of open interval connected components.
    
    We will define our functions $f_i$ on $A_0$ and connected components of $A_+,A_-$ separately, and we will define them so that $\frac{1}{n}\sum_{i = 1}^n f_i(t) = t$ for all $t$.
    For each $1 \leq i \leq n$, if $t \in A_0$, then we let $f_i(t) = t$.
    Now let $(a,b)$ be a connected component of $A_+$, and we will define $f_1,\dots,f_n$ on $[a,b]$.
    As $f - g$ is positive on $(a,b)$, we see that $a = g(a) \leq g\left(\frac{a + b}{2}\right) < f\left(\frac{a + b}{2}\right) \leq f(b) = b$.
    At least one of $f\left(\frac{a + b}{2}\right), g\left(\frac{a + b}{2}\right)$ is in $A_0$, and both are in $[a,b]$, so it must be either $a$ or $b$.
    However, these numbers add to $a + b$, so they must be $b$ and $a$ respectively.
    By continuity and monotonicity, we see that the other values of $(c,d) \in \mathrm{im}(p)$ with $\frac{c + d}{2} \in [a,b]$ are exactly the points of the form $(t,a),(b,t)$ for $t \in [a,b]$.
    Thus the values of $(f_1,\dots,f_n)$ on $[a,b]$ should all be of the form $(b,\dots,b,t,a,\dots,a)$ for $t \in [a,b]$.

    It will thus suffice to define $f_1,\dots,f_n$ on $[a,b]$ such that
    \begin{itemize}
        \item for all $i$, $f_i$ is continuous and monotone on $[a,b]$,
        \item for all $i$, $f_i(a) = a$ and $f_i(b) = b$,
        \item for all $i < j$, $t \in [a,b]$, either $f_i(t) = b$ or $f_j(t) = a$.
    \end{itemize}
    
    We define our functions on $[a,b]$ by breaking up $[a,b]$ into $n$ subintervals of the form $\left[\frac{ia + (n-i)b}{n}, \frac{(i + 1)a + (n - i - 1)b}{n}\right]$,
    where $f_i(t) = a$ on $\left[a, \frac{ia + (n - i)b}{n}\right],$ $f_i(t)$ increases from $a$ to $b$ linearly on $\left[\frac{ia + (n-i)b}{n}, \frac{(i + 1)a + (n - i - 1)b}{n}\right]$,
    and $f_i(t) = b$ on $\left[\frac{(i + 1)a + (n - i - 1)b}{n}, b\right]$.
    If $i < j$, we see that for all $t \in [a,b]$, either $f_i(t) = b$ or $f_j(t) = a$, so we are done.

    If instead $(a,b)$ is a connected component of $A_-$, the functions can be defined similarly.
    As we have defined continuous, monotone functions on closed intervals covering $[0,1]$ in a way that endpoints agree and any pair $(f_i,f_j)$ with $i < j$ only takes values in $\mathrm{im}(p)$, we are done.
\end{proof}

\subsection{Another Language}
We now propose a new language for this structure.
Because by Lemma \ref{lem_type_two}, all types are determined by their restrictions to pairs of variables, it suffices to choose predicate symbols that generate all definable predicates on two variables.
By Stone-Weierstrass, it suffices to find a set of definable predicates on two variables that separates points on the type space $S_2$.
For this, we may take the family $\{\phi_\alpha(x,y) : \alpha \in [0,1] \cap \Q\}$, where when $(a,b) \in \mathrm{im}(\mathrm{tp}(f,g))$ is the unique point such that $a + b = \alpha$, $\phi_\alpha(f,g) = a$.
It is clear that each of these is 1-Lipschitz, so define $\mathcal{L}$ to be the language consisting only of 1-Lipschitz binary predicates $\phi_\alpha(x,y)$ for $\alpha \in [0,1] \cap \Q$.
Because the image of a type, and thus the type itself, is determined entirely by the value of these atomic predicates, $\mathrm{Th}(M_{[0,1]})$ eliminates quantifiers in $\mathcal{L}$.

We can also axiomatize $\mathrm{Th}(M_{[0,1]})$ fairly easily in this language.
For simplicity, we extend the language by quantifier-free definitions to include $\phi_\alpha(x,y)$ for $\alpha \in [0,1]$ by taking uniform limits.

\begin{lem}\label{lem_axiom}
    The theory of $M_{[0,1]}$ is axiomatized by the following theory, which we describe with equations and inequalities for clarity:
    \begin{align*}
       & \{\phi_\alpha(x,y) + \phi_\alpha(y,x) = \alpha : \alpha \in [0,1] \} \\
        \cup & \{\phi_\alpha(x,y) \leq \phi_\beta(x,y) : 0 \leq \alpha < \beta \leq 1 \} \\
        \cup & \{\inf_{x_1,\dots,x_n}\bigvee_{0 \leq k \leq m, i \neq j} |\phi_{c_k(i) + c_k(j)}(x_i,x_j) - c_k(i)| = 0 : c_0,\dots,c_m \in [0,1]^n \textrm{is a finite chain} \} 
    \end{align*}
\end{lem}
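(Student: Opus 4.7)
The argument splits into verifying that $M_{[0,1]}$ satisfies the three families of axioms and establishing that any model of these axioms is elementarily equivalent to $M_{[0,1]}$. For the first task, symmetry and monotonicity follow directly from the definition of $\phi_\alpha$ via the chain $\mathrm{im}(f,g)$: symmetry because $(a,\alpha - a) \in \mathrm{im}(f,g)$ gives $(\alpha - a, a) \in \mathrm{im}(g,f)$, and these sum to $\alpha$; monotonicity because two points $(a_1, \alpha - a_1), (a_2, \beta - a_2)$ lying in a chain with $\alpha < \beta$ cannot have $a_1 > a_2$ without violating comparability. For the realization axiom, given a finite chain $c_0 < \cdots < c_m$ in $[0,1]^n$, I would prepend $\mathbf{0}$ and append $\mathbf{1}$ (if not already present) and linearly interpolate between consecutive tuples to produce a connected chain from $\mathbf{0}$ to $\mathbf{1}$; Lemma \ref{lem_connected_chain_path} then produces a realizing tuple $(f_1,\dots,f_n) \in M_{[0,1]}^n$ witnessing that the infimum is zero.

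For completeness, the plan is to show that any model $M$ of the axioms is isomorphic to $M_L$ for some linear continuum with endpoints $L$. Since all such $M_L$ have the same $n$-variable type spaces by Lemma \ref{lem_tp_metric}, they are elementarily equivalent, yielding $M \equiv M_{[0,1]}$. The construction of $L$ parallels the proof of Theorem \ref{thm_lin_cont}: take $L$ to be the ``virtual image'' consisting of all $(a_m)_{m \in M} \in [0,1]^M$ such that $\phi_{a_m + a_{m'}}(m, m') = a_m$ for all $m, m' \in M$. Axioms 1 and 2 together with the $1$-Lipschitz modulus built into the language force $L$ to be a chain in the product order with endpoints $\mathbf{0}$ and $\mathbf{1}$, while axiom 3 supplies the realization needed for $L$ to be order-dense. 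The isomorphism $M \to M_L$ sending $f$ to $(x \mapsto x_f)$ is then an isometry preserving types, and surjectivity follows from the definability-of-averages argument used in the proof of Theorem \ref{thm_lin_cont}.

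The main obstacle is showing that $L$ is a genuine linear continuum with endpoints, that is, connected in the order topology. Since axiom 3 only provides realization of finite chains, one must argue that every potential gap in $L$ is filled, either by invoking $\aleph_0$-saturation in a sufficiently saturated elementary extension or by leveraging the metric completeness of $M$ directly: given candidate endpoints for a gap, axiom 3 produces elements realizing progressively finer chains straddling it, and a Cauchy argument yields a genuine intermediate element. Once $L$ is established as a linear continuum, the remaining verification is a routine adaptation of the type-space analysis of Lemma \ref{lem_tp_metric} and the isomorphism construction of Theorem \ref{thm_lin_cont}.
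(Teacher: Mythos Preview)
Your approach diverges from the paper's in the completeness direction. The paper does not construct any $L$ at all; instead it argues that axioms 1 and 2 force the set $\{(\phi_\alpha(x,y),\phi_\alpha(y,x)) : \alpha \in [0,1]\}$ to be a connected chain from $0$ to $1$, i.e.\ a valid element of $S_2$, so (via Lemma~\ref{lem_type_two}) every tuple in a model of the axioms acquires a well-defined type in $S_n$. Axiom 3 is then unpacked as follows: for a target connected chain $C \in S_n$, choose the points $c_0,\dots,c_m$ on $C$ where the coordinate sum equals $\tfrac{kn}{m}$; any tuple whose image contains $\{c_0,\dots,c_m\}$ has type within $n/m$ of $C$ in Hausdorff distance, so the axiom forces every type to be approximately realized. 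The paper then simply invokes separable categoricity to conclude that these two facts pin down the theory.

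Your route through an explicit $L$ is more constructive but has a genuine gap at the surjectivity step. You write that surjectivity of $M \to M_L$ ``follows from the definability-of-averages argument used in the proof of Theorem~\ref{thm_lin_cont},'' but in that theorem the standing hypothesis is already $M \equiv M_{[0,1]}$. The definability of the function $\theta\circ\mathrm{avg}$ is established there by showing its graph is type-definable and appealing to an external result about $\omega$-categorical structures; you cannot invoke that for a structure known only to satisfy axioms 1--3. Concretely, to locate the approximating element $m$ over \emph{given} $m_1,\dots,m_{n-1}$ you need an \emph{extension} property---that every $(n{+}1)$-type extending a realized $n$-type is realized over the given tuple---whereas axiom 3 is only an unconditional $\inf$ over all of $M^n$, with no control over which tuple witnesses it. Either you must derive the extension property from the stated axioms (non-trivial, and essentially the categoricity step done by hand), or drop the isomorphism with $M_L$ and argue elementary equivalence directly from the type-space match, which collapses back to the paper's shorter argument.
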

\begin{proof}
It suffices to require that for each pair of variables $x,y$, the set $\{(\phi_\alpha(x,y),\phi_\alpha(y,x)) : \alpha \in [0,1]\}$ forms a valid type in $S_2$,
and to require that every type in each $S_n$ is realized, which they are in all structures as the theory is separably categorical.
First, we require that $\phi_\alpha(x,y) + \phi_\alpha(y,x) = \alpha$ with an axiom for each $\alpha$.
To check that $\{(\phi_\alpha(x,y),\phi_\alpha(y,x)) : \alpha \in [0,1]\}$ is a chain from $0$ to $1$, we add axioms ensuring that $\phi_\alpha(x,y) \leq \phi_\beta(x,y)$ for each $\alpha < \beta$.
These also imply that $\{(\phi_\alpha(x,y),\phi_\alpha(y,x)) : \alpha \in [0,1]\}$ is connected, as the function $\alpha \mapsto (\phi_\alpha(x,y),\phi_\alpha(y,x))$ is 1-Lipschitz and thus continuous.

Now to ensure that each type is realized.
For each connected chain $C \subseteq [0,1]^n$ from 0 to 1, and each $m \in \N$, let $c_0,\dots,c_m$ be the points on $C$ such that at $c_i$, $\sum_{i = 1}^n x_i = \frac{in}{m}$.
Then let $(a_1,\dots,a_n)$ be such that $\{c_0,\dots,c_m\} \subseteq \mathrm{im}(a_1,\dots,a_n)$.
For each $c \in \mathrm{im}(a_1,\dots,a_n)$, there is some $i$ with $c_i \leq c \leq c_{i + 1}$, so $d(c,C) \leq d(c,c_i) \leq \frac{n}{m}$.
Thus if 
$$\bigvee_{0 \leq k \leq m, i \neq j} |\phi_{c_k(i) + c_k(j)}(x_i,x_j) - c_k(i)| = 0$$
 at a particular $(a_1,\dots,a_n)$, we find that $d(\mathrm{tp}(a_1,\dots,a_n),C) \leq \frac{n}{m}$ and if 
$$\inf_{x_1,\dots,x_n}\bigvee_{0 \leq k \leq m, i \neq j} |\phi_{c_k(i) + c_k(j)}(x_i,x_j) - c_k(i)| = 0$$
 for $(c_0,\dots,c_m)$ for all $m$, the type with image $C$ is realized.
As $\{c_0,\dots,c_m\}$ could be any chain from 0 to 1, and in fact this predicate will still be 0 for any finite chain, we simply require this for all finite chains.
\end{proof}

We now consider distal cell decompositions in this language.
\begin{defn}
If $\phi(x;y)$ is a definable predicate, and $\Psi$ is a finite set of definable predicates of the form $\psi(x;y_1,\dots,y_k)$,
then $\Psi$ \emph{weakly defines} a $\varepsilon$-\emph{distal cell decomposition} over $M$ for $\phi(x;y)$ when
for every finite $B \subseteq M^y$ with $|B| \geq 2$ and every $a \in M^x$, there are $\psi \in \Psi$ and $b_1,\dots,b_k \in M^x$ such that
$\psi(a;b_1,\dots,b_k) > 0$ and for all $a' \in M^x$,
$\psi(a';b_1,\dots,b_k) > 0$ implies $|\phi(a;b) - \phi(a';b)| \leq \varepsilon$ for all $b \in B$.
\end{defn}

\begin{thm}
Each $\phi_\alpha(x;y)$ admits a $\varepsilon$-distal cell decomposition over $M_{[0,1]}$ for each $\varepsilon > 0$, which we construct explicitly.
\end{thm}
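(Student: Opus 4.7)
The plan is to exploit the joint $1$-Lipschitz property of $\phi_\alpha$ on $M_L^2$ (in the sup metric) to reduce the $\varepsilon$-homogeneity requirement to forcing sup-closeness of $x$ to $f$, and then to take $\Psi$ to consist of a single cell type encoding a small sup-metric ball around a parameter.

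Concretely, I would first recall from the paragraph introducing $\mathcal L$ that $d(f,f') \leq \delta$ implies $|\phi_\alpha(f,g) - \phi_\alpha(f',g)| \leq \delta$ uniformly in $g$. Hence the clause $|\phi_\alpha(f,g) - \phi_\alpha(f',g)| \leq \varepsilon$ for all $g \in B$ follows as soon as $d(f,f') \leq \varepsilon$, with no dependence on the finite set $B$. So it suffices to produce, for each $f$, a definable cell of sup-diameter at most $\varepsilon$ containing $f$, parameterized by elements of $M_L$.

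For the main construction, I would take $\Psi := \{\psi\}$ with $\psi(x;u) := (\varepsilon/2 - d(x,u))_+$, a $1$-Lipschitz definable predicate whose support is the open $\varepsilon/2$-ball around $u$. For each $f \in M_L$ and each finite $B$, choose the parameter $u := f$. Then $\psi(f;f) = \varepsilon/2 > 0$, placing $f$ in the cell; and any $f'$ with $\psi(f';f) > 0$ satisfies $d(f,f') < \varepsilon/2$, so by the Lipschitz bound above, $|\phi_\alpha(f,g) - \phi_\alpha(f',g)| < \varepsilon/2 < \varepsilon$ for every $g \in B$, completing the verification.

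For a construction more adapted to the language $\mathcal L$ introduced just before the theorem, one can instead use a ``tube'' cell $\psi(x;u_1,u_2) := (\varepsilon/4 - \rho(x;u_1,u_2))_+$, where $\rho$ is a $1$-Lipschitz $\mathcal L$-definable predicate measuring the pointwise excess of $u_1$ over $x$ or of $x$ over $u_2$ (built from the $\phi_\alpha$'s), and take $u_1 := \max(0, f - \varepsilon/8)$ and $u_2 := \min(1, f + \varepsilon/8)$ as tight brackets of $f$, both of which lie in $M_L$ since $f$ attains both $0$ and $1$. The only remaining work in either version is routine verification of Lipschitz constants and, in the structural version, checking $\mathcal L$-definability of the pointwise-bracketing predicate; no deeper obstacle arises, since distality of the theory already guarantees the abstract existence of $\varepsilon$-distal cell decompositions, and the above merely makes the witness explicit.
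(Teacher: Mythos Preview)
Your proposal has a fundamental gap: in a distal cell decomposition, the parameters $b_1,\dots,b_k$ plugged into the cell formula $\psi(x;y_1,\dots,y_k)$ must be drawn from the given finite set $B$, not from arbitrary elements of the model. (The definition in the paper writes ``$b_1,\dots,b_k\in M^x$'', but this is a slip; the proof itself selects $b_-,b_+\in B$, and the standard notion---as in the discrete case and in \cite{anderson2}---requires parameters from $B$, else the property would hold trivially for every Lipschitz predicate in every metric structure.) Your first construction sets $u:=f=a$, the very point being covered, which is not an element of $B$; your second construction sets $u_1,u_2$ to translates of $a$, which are likewise not in $B$. With that constraint removed, your argument would indeed show something---but only that balls are $\varepsilon$-homogeneous, which is immediate from the Lipschitz bound and has no bearing on distality.

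By contrast, the paper's proof does the real work: for each threshold $\tfrac{i}{n}$ it selects a \emph{witness from $B$}---namely the $b_-\in B$ maximizing $\sup b_-^{-1}(\{\alpha-\tfrac{i}{n}\})$ among those $b$ with $\phi_\alpha(a;b)<\tfrac{i}{n}$---and argues, via the characterization of $\phi_\alpha(a';b)<\tfrac{i}{n}$ in terms of intersecting preimage intervals, that the single constraint $\phi_\alpha(a';b_-)<\tfrac{i}{n}$ forces $\phi_\alpha(a';b)<\tfrac{i}{n}$ for every relevant $b\in B$. This is exactly the combinatorial content a distal cell decomposition is meant to capture, and it is absent from your proposal. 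Your closing remark that abstract distality already guarantees existence is correct but beside the point: the theorem promises an \emph{explicit} construction, and the one you give does not meet the definition.
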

\begin{proof}
Let $B \subseteq M_{[0,1]}$ be finite with $|B| \geq 2$.

For each $0 \leq i \leq n$, let $F_{i-}$ be a continuous function with support $\left[0,\frac{i}{n}\right)$, and let $F_{i+}$ be a continuous function with support $\left(\frac{i}{n},1\right]$.
We will show for each $0 < i < n$, there are there are some $\psi_{i-}(x),\psi_{i+}(x)$, with $\psi_{i-}(x)$ of the form either $1$ or $F_{i-}(\phi_\alpha(x;b_-))$,
and $\psi_{i+}(x)$ either of the form 1 or $F_{i+}(\phi_\alpha(x;b_+))$ with $b_-,b_+ \in B$,
such that $\psi_{i\pm}(a) > 0$, while $\psi_{i-}(a') > 0$ implies $\phi_\alpha(a';b) < \frac{i}{n}$ for each $b \in B$ with $\phi_\alpha(a';b) < \frac{i}{n}$, and 
$\psi_{i+}(a') > 0$ implies $\phi_\alpha(a';b) > \frac{i}{n}$ for each $b \in B$ with $\phi_\alpha(a';b) > \frac{i}{n}$.
Once we know this, we can let $\psi(x) = \bigwedge_{i = 0}^n \left(\psi_{i-}(x) \wedge \psi_{i+}(x)\right)$.
Then $\psi(x)$ will be of the form $\psi(x;b_1,\dots,b_k)$, where $\psi$ is one of a finite set $\Psi$ of formulas, and $b_1,\dots,b_k$.
This set $\Psi$ weakly defines a $\frac{2}{n}$-distal cell decomposition, because 
$\psi(a) > 0$, and for every $b \in B$, there is some $i$ such that $\frac{i}{n} < \phi_\alpha(a;b) < \frac{i + 2}{n}$,
so $\psi(a';b_1,\dots,b_k) > 0$ implies $\frac{i}{n} < \phi_\alpha(a';b) < \frac{i + 2}{n}$,
and thus $|\phi_\alpha(a;b) - \phi_\alpha(a';b)|\leq \frac{2}{n}$.

By symmetry, it suffices to construct $\psi_{i-}(x)$.
Let $b_- \in B$ maximize $\sup(b_-^{-1}(\{\alpha - \frac{i}{n}\}))$ under the constraint that $\phi_\alpha(a;b) < \frac{i}{n}$.
If there is not some $b_-$ satisfying this constraint, then we simply let $\psi_{i-}(x) = 1$, the rest of the requirements are trivial.
If it does exist, then we let $\psi_{i-}(a) = F_{i-}(\phi_\alpha(a;b_-))$, and by construction, $F_{i-}(\phi_\alpha(a;b_-)) > 0$.

If $a' \in M^x, b \in B$, we claim that $\phi_\alpha(a';b) < \frac{i}{n}$ if and only if the interval $b^{-1}((\alpha - \frac{i}{n},1])$ intersects the interval $a'^{-1}([0,\frac{i}{n}))$.
Let $t_\alpha$ be such that $a'(t_\alpha) + b(t_\alpha) = \alpha$.
If $\phi_\alpha(a';b) < \frac{i}{n}$, then these intervals intersect at $t_\alpha$.
If these intervals intersect at some $t$, then we know that $a'(t) < \frac{i}{n}$ and $b(t) > \alpha - \frac{i}{n}$.
If $a'(t) + b(t) < \alpha$, then $t < t_\alpha$, 
and thus $b(t_\alpha) \geq b(t) > \alpha - \frac{i}{n}$, so $\phi_\alpha(a';b) <\frac{i}{n}$, and similarly if $a'(t) + b(t) > \alpha$, then $t > t_\alpha$,
so $\phi_\alpha(a';b) = a'(t_\alpha) < \frac{i}{n}$. If $a'(t) + b(t) = \alpha$, then $\phi_\alpha(a';b) = a'(t) < \frac{i}{n}$.

Now assume $\psi_{i-}(a') > 0$, $\phi_\alpha(a;b) < \frac{i}{n}$, - we wish to show that $\phi_\alpha(a';b) < \frac{i}{n}$.
Because $\psi_{i-}(a') > 0$, we see that $b_-^{-1}((\alpha - \frac{i}{n},1])$ intersects $a'^{-1}([0,\frac{i}{n}))$,
and by the definition of $b_-$, because $\phi_\alpha(a;b) < \frac{i}{n}$, we know that $b_-^{-1}((\alpha - \frac{i}{n},1]) \subseteq b^{-1}((\alpha - \frac{i}{n},1])$.
Thus $b^{-1}((\alpha - \frac{i}{n},1])$ intersects $a'^{-1}([0,\frac{i}{n}))$ and $\phi_\alpha(a';b) < \frac{i}{n}$.
\end{proof}

\subsection{Nondiscreteness}
Dual linear continua provide the best example of distal metric structures that are truly different from distal discrete structures.
There are several possible criteria for determining whether a metric structure is non-discrete, and \cite{hansonsimple} compares several of these.
Of these, the strongest, there denoted as $\star$, is defined as follows:
\begin{defn}
    A metric structure has the property $\star$ when for any small partial type $\Sigma(x)$ (in finitely many variables), the metric space of realizations of $\Sigma(x)$ in the monster has a bounded number of connected components.
\end{defn}

\begin{thm}
    Dual linear continua have property $\star$.
\end{thm}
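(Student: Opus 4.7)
The plan is to reduce to complete types and then show that every complete type's realization set is path-connected. Specifically, it suffices to prove that for each complete type $p \in S_n(A)$ over a small parameter set $A$, the set of realizations of $p$ in the monster is path-connected: the realization set of $\Sigma$ then decomposes into at most $|S_n(A)|$ such pieces, which is a cardinal bound independent of the monster. So fix such a $p$ and identify the monster with $M_L$ for a sufficiently large linear continuum with endpoints $L$, using Theorem \ref{thm_lin_cont}. Enumerate $A$ as $\bar A$, and let $C \subseteq [0,1]^{n + |A|}$ be the chain $\mathrm{im}(\bar f, \bar A)$, which depends only on $p$ and so is shared by all realizations.

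Given two realizations $\bar f, \bar g$ of $p$, I interpolate between them inside $C$ as follows. Define $h : C \to [0, n]$ by $h(c) = \sum_{i=1}^n c_i$. On each fiber $F_x := \pi_A^{-1}(x) \cap C$ of the projection $\pi_A : C \to \pi_A(C)$, the $\bar A$-coordinates are constant, so any nontrivial step in the chain order must strictly increase some $\bar x$-coordinate, making $h$ strictly monotone on $F_x$. Combined with monotonicity of $\pi_A$ along $C$, this makes $c \mapsto (\pi_A(c), h(c))$ a continuous injection of compact $C$ into $\pi_A(C) \times [0, n]$, hence a homeomorphism onto its image.

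For each $t \in L$, let $c_0(t) := (\bar f(t), \bar A(t))$ and $c_1(t) := (\bar g(t), \bar A(t))$, which lie in the common fiber $F_{\bar A(t)}$. Define $c_s(t)$ to be the unique point of $F_{\bar A(t)}$ with $h$-value $(1-s)\, h(c_0(t)) + s\, h(c_1(t))$. By the homeomorphism above, $c_s(t)$ is jointly continuous in $(s,t) \in [0,1] \times L$. For fixed $s$, $c_s$ is monotone in $t$: distinct $\bar A$-values force the chain order by coordinate-wise comparability, while coinciding $\bar A$-values keep us in a single fiber where $h$-values are nondecreasing. It is also surjective onto $C$, since on each preimage interval $\bar A^{-1}(x) = [a,b]$, continuity forces $c_0(a) = c_1(a) = \inf F_x$ and $c_0(b) = c_1(b) = \sup F_x$, so $h_s = (1-s)\, h \circ c_0 + s\, h \circ c_1$ sweeps the full $h$-range of $F_x$. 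Setting $\bar f_s := \pi_{\bar x} \circ c_s$, the map $s \mapsto \bar f_s$ is a continuous path in $M_L^n$ from $\bar f$ to $\bar g$ consisting of realizations of $p$, where uniform continuity on the compact $[0,1] \times L$ converts pointwise continuity into continuity in the sup metric.

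The main technical obstacle is the geometry of $C$: verifying that $h$ is strictly monotone on each fiber, that $\pi_A \times h$ is a homeomorphism onto its image, and that the interpolated $c_s$ is both monotone and surjective. These rest on the fact, used throughout the section, that chains from $0$ to $1$ in product spaces are totally ordered by the coordinate-wise order, so interpolation in the $\bar x$-direction while preserving the $\bar A$-projection reduces to linear interpolation in the single scalar $h$. The bookkeeping at fiber boundaries, where continuity pins $c_0, c_1$ to the extremes of each fiber, is what ultimately guarantees that the interpolated path still has image equal to all of $C$ and therefore realizes $p$.
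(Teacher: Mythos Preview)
Your proof is correct and follows essentially the same strategy as the paper's, though packaged differently.

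Both arguments reduce to showing that the realization set of a complete type is path-connected and then interpolate between two realizations. The paper first reduces $n$-variable types to $1$-variable types by passing to the pointwise average $\hat f = \frac{1}{n}\sum f_i$ (using that $\bar f \mapsto \hat f$ is a continuous surjection from realizations of $p$ to realizations of the induced $1$-type $q$, with inverse given by postcomposing with fixed $h_1,\dots,h_n$). It then shows the $1$-variable realization set is literally convex: $(1-\lambda)f + \lambda g$ realizes $q$ because, checking against each parameter $a$ via Lemma~\ref{lem_type_two}, $f$ and $g$ attain the same interval of values on each level set $a^{-1}(\{c\})$. Your $h(c)=\sum_i c_i$ is exactly $n$ times this average, and your fiberwise interpolation in $C$ is the $n$-variable lift of the paper's linear interpolation of averages; you simply carry out the argument directly in the chain $C\subseteq[0,1]^{n+|A|}$ rather than factoring through the $1$-variable reduction. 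The paper's route is shorter because it reuses the averaging infrastructure already established; yours is more self-contained and makes the geometry of the path inside $C$ explicit.

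One minor difference in the reduction step: the paper invokes \cite[Theorem~3.1]{hansonsimple} to reduce to complete types over small models, while you bound the number of components directly by $|S_n(A)|$. Both are fine. Your final appeal to ``uniform continuity on the compact $[0,1]\times L$'' deserves a word of care since $L$ need not be metrizable, but the standard tube-lemma/compactness argument (cover $L$ by finitely many neighborhoods on which $c_s(t)$ is close to $c_{s_0}(t)$) goes through and gives sup-metric continuity in $s$, so the conclusion stands.
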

\begin{proof}
    By Theorem \ref{thm_lin_cont}, the monster model is isomorphic to $M_L$ for some linear continuum with endpoints $L$ - we shall assume that it is indeed $M_L$.
    By \cite[Theorem 3.1]{hansonsimple}, to check $\star$ it suffices to check that the space of realizations of complete types over small models are connected.
    Thus let $M \preceq M_L$ be a small model and let $p(x_1,\dots,x_n)$ be a complete $M$-type.
    There are unique functions $h_1,\dots,h_n : [0,1] \to [0,1]$ such that for any realization $(f_1,\dots,f_n)$ of $p$, with $f = \frac{1}{n}\sum_{i = 1}^n f_i$,
    we have $f_i = h_i \circ f$.
    There is also a unique complete $M$-type $q(x)$ of averages of realizations of $p$.
    We see that $f \mapsto (h_1,\dots,h_n) \circ f$ is a function from the space of realizations of $q$ to the space of realizations of $p$,
    and is continuous with respect to the $\sup$ metric, so it suffices to show that the space of realizations of $q$, in one variable, is connected.
    In fact, we will show that it is convex, and thus path-connected.

    Suppose $f,g \in M_L$ are both realizations of $q$. It suffices to show that for $\lambda \in [0,1]$, $\mathrm{tp}((1 - \lambda)f + \lambda g/M) = q$.
    In fact, by Lemma \ref{lem_type_two}, it suffices to check that for each $a \in M$, $\mathrm{tp}((1 - \lambda)f + \lambda g,a)$, or equivalently $\mathrm{im}((1 - \lambda)f + \lambda g,a)$, does not depend on $\lambda$.
    For each $c \in [0,1]$, both $f$ and $g$ obtain the same closed interval of values on the preimage $a^{-1}(\{c\})$.
    Thus for any $t \in a^{-1}(\{c\})$, we have that $(1 - \lambda)f(t) + \lambda g(t)$ is also in this interval, so $((1 - \lambda)f(t) + \lambda g(t),a(t)) \in \mathrm{im}((f,a))$,
    implying that $\mathrm{im}((1 - \lambda)f+ \lambda g,a) = \mathrm{im}((f,a))$ for all $\lambda$.
\end{proof}

\section{Nonexamples}\label{sec_non}
In discrete logic, there is an open question as to which NIP structures admit distal expansions.
The Strong Erd\H{o}s-Hajnal property is one requirement for admitting a distal expansion, and we have shown that this is still required in continuous logic,
but little else is known.
In continuous logic, however, we can see a wide class of NIP structures which cannot admit distal expansions for a seemingly different reason: Banach structures.
We thank James Hanson for pointing this out.

\begin{defn}
A \emph{Banach structure} is an expansion of a Banach space, viewed as a metric structure. The theory of a Banach structure is called a \emph{Banach theory}.
\end{defn}

It will be easy to see that many of these are not distal, because of the following fact:

\begin{fact}[{\cite[Corollary 6.10]{hanson_smsets}}]
Every Banach theory with infinite dimensional models has an infinite indiscernible set in some model.
\end{fact}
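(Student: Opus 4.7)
The plan is to combine classical Banach space geometry with Ramsey-type extraction in continuous logic. Starting from an infinite-dimensional model $M$ of the Banach theory, we aim to produce, in some elementary extension, an infinite set $\{y_i : i < \omega\}$ whose finite tuples of distinct elements all realize a common type in the full language.

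First, extract a starting order-indiscernible sequence. By Riesz's lemma the Banach space reduct of $M$ contains an infinite bounded sequence with pairwise distances bounded below, and continuous-logic Ramsey together with compactness in the full Banach-structure language then yields an order-indiscernible sequence $(x_i)_{i<\omega}$ in an elementary extension.

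Second, maximize symmetry at the level of the reduct by invoking Krivine's theorem: every infinite-dimensional Banach space finitely represents $\ell_p$ for some $p \in [1,\infty]$ (with $p=\infty$ meaning $c_0$). Hence in a suitable ultrapower of the Banach space reduct one locates a sequence $(e_i)_{i<\omega}$ isometric to the unit basis of $\ell_p$. This basis is $1$-symmetric: every finite permutation of its elements extends to an isometry of its closed span, and therefore to a partial elementary map of the Banach space reduct. So the reduct types of finite subtuples of $(e_i)_{i<\omega}$ are fully permutation-invariant; the basis is already an indiscernible \emph{set} for the reduct.

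Third, transfer this symmetry to the full Banach-structure language. Apply Ramsey and compactness one more time to $(e_i)_{i<\omega}$, now in the expanded language, to obtain an order-indiscernible sub-sequence $(y_i)_{i<\omega}$ whose reduct types remain those of the $\ell_p$ basis. Performing this final extraction inside a sufficiently saturated ultrapower, the permutation symmetries of the reduct can be realized, one finite arity at a time, as partial elementary maps of the full structure, which by saturation forces the full-language types of $(y_i)_{i<\omega}$ to be permutation-invariant as well. The resulting sequence is then an infinite indiscernible set.

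The main obstacle is the third step: the extra predicates of a Banach theory can in principle distinguish orderings that are equivalent in the reduct, so reduct-symmetry does not automatically lift. Overcoming this requires a careful compactness argument inside a saturated ultrapower, exploiting the abundance of reduct automorphisms coming from the $\ell_p$ basis to realize every permutation-symmetric extension of the reduct type in the full language, and then using metric Ramsey to pass to a sub-sequence on which these extensions coincide.
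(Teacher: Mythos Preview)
The paper does not prove this statement; it is cited as a fact from Hanson's work without argument, so there is no proof in the paper to compare your proposal against.

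Your outline has a genuine gap at the third step, and your own final paragraph essentially concedes this. The problem is that reduct symmetry does not lift to the full language by the mechanism you describe. Suppose $(y_i)$ is order-indiscernible in the full language and its Banach-space reduct is the $\ell_p$ basis, so every finite permutation of the $y_i$ extends to a linear isometry of their closed span. That isometry is partial elementary \emph{for the reduct only}; nothing forces it to respect the additional predicates of the expansion, and saturation does not help, since saturation realizes types but does not promote reduct-elementary maps to full-language elementary maps. Concretely, order-indiscernibility in the full language is perfectly compatible with a binary predicate $Q$ satisfying $Q(y_i,y_j)\neq Q(y_j,y_i)$ for $i<j$, while the reduct remains an $\ell_p$ basis; your argument provides no way to rule this out. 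The phrase ``realize every permutation-symmetric extension of the reduct type in the full language, and then using metric Ramsey to pass to a sub-sequence on which these extensions coincide'' is not an argument: Ramsey extraction yields order-indiscernibles, not indiscernible sets, so it cannot by itself collapse $Q(y_i,y_j)$ and $Q(y_j,y_i)$.

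What is missing is a step that genuinely \emph{uses} the linear structure inside the full language rather than only in the reduct. The point of being in a Banach structure is that one can build new elements from an indiscernible sequence by linear combinations (averages, differences, blocks), and these are definable operations in the full language; the symmetrization has to happen at that level. Krivine-type input is relevant, but as an ingredient in constructing such combinations whose full-language EM-type is forced to be symmetric, not as a source of reduct automorphisms to be transported across the language boundary.
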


\begin{cor}
No Banach theory with infinite dimensional models is distal.
\end{cor}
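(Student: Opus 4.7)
The plan is essentially a one-step application of the cited Fact, modulo the standard principle that a distal theory admits no infinite nontrivial indiscernible set. To establish this principle in the metric setting, I would argue directly from the indiscernible-sequence characterization of distality used earlier in the paper (\cite[Theorem 5.22]{anderson1}): suppose $(a_i : i \in \Q)$ is a totally indiscernible sequence of pairwise distinct points in some model of a distal theory. Pick any $q \in \Q$ and regard the sequence as $(a_i : i < q) + a_q + (a_i : i > q)$. By total indiscernibility, the subsequence $(a_i : i \neq q)$ is indiscernible over $a_q$, so distality forces $(a_i : i \in \Q)$ itself to be indiscernible over $a_q$. In particular, for any $p \neq q$, $\mathrm{tp}(a_p/a_q) = \mathrm{tp}(a_q/a_q)$, and hence $d(a_p, a_q) = 0$, contradicting distinctness.

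Granted this principle, the corollary is immediate. Let $T$ be a Banach theory with infinite-dimensional models. The Fact produces an infinite indiscernible set $\{a_i\}$ in some model of $T$; its elements are distinct (an indiscernible set is, in particular, a set), and indiscernibility forces a common positive pairwise distance $d(a_i,a_j) = r > 0$ for $i \neq j$, so the set is genuinely nontrivial. The preceding paragraph then rules out distality of $T$.

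The only point requiring real attention is confirming the principle ``distal implies no nontrivial infinite indiscernible set'' in the continuous setting. As sketched above, this falls out of exactly the same indiscernible-sequence characterization the paper has already set up for the $RCMVF$ argument, so it is not so much an obstacle as a short remark that should be made explicit; alternatively one may cite it from \cite{anderson1} directly if a clean statement is available there.
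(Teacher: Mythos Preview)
Your proposal is correct and follows essentially the same approach as the paper: both arguments extract one element from an infinite indiscernible set, observe that the remaining sequence is indiscernible over it, invoke distality to reinsert the element, and derive that all elements coincide. The paper phrases the conclusion as ``every element of $I + d + J$ satisfies $x = d$'', while you spell it out via $\mathrm{tp}(a_p/a_q) = \mathrm{tp}(a_q/a_q)$ and hence $d(a_p,a_q)=0$; these are the same point.
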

\begin{proof}
This is true because no distal structure has an infinite indiscernible set, by the same proof as in discrete logic:

If it did, we could partition such a set into two infinite subsets $I, J$ and an extra element, $d$.
Then by the indiscernibilty of the overall set, $I + J$ is indiscernible over $d$, and thus by distality, $I + d + J$ is indiscernible over $d$, implying that every element of $I + d + J$ satisfies $x = d$.
This clearly contradicts the set being infinite.
\end{proof}

One particularly interesting class of Banach structures is randomizations.
If $T$ is a metric theory, then the \emph{randomization theory} $T^R$ of $T$ can be constructed in a few ways, each of which captures the idea that a model of $T^R$ consists of random variables valued in models of $T$.
The construction in \cite{randvar} adds to the sorts of $T$ an extra sort, consisting of an algebra of random variables, which is an $L_1$-space, and thus is a Banach structure.
While the randomization of a stable theory is stable (\cite[Theorem 4.9]{randvar}) and the randomization of an NIP theory is NIP (\cite[Theorem 5.3]{randomVC}), we see that the same is not true of distality, as the randomization of \emph{any} structure is not distal.
Restricting to the original sorts of $T$ will not change this, as the random variable sort is interpretable from the induced structure on the other sorts.

\bibliographystyle{plainurl}
\bibliography{ref.bib}

\end{document}